\documentclass[12pt,twoside]{amsart}
\input{includeNice3}
\usepackage{mabliautoref}
\usepackage{amssymb}
\usepackage{amscd}
\usepackage[abbrev,alphabetic]{amsrefs}
\usepackage{csquotes}
\usepackage{hyperref}
\usepackage[a4paper,margin=1in]{geometry}  
\usepackage{CJKutf8}
\usepackage{soul}

\usepackage{tikz-cd}
\usepackage{caption}

\usepackage{url}
\RequirePackage{booktabs, multirow}
\RequirePackage{pgf}
\newcommand{\rar}{\rightarrow}	
\newcommand{\drar}{\dashrightarrow}

\newcommand{\ZZ}{\mathbb{Z}}
\newcommand{\QQ}{\mathbb{Q}}
\newcommand{\RR}{\mathbb{R}}
\newcommand{\CC}{\mathbb{C}}

\def\O#1.{\mathcal {O}_{#1}}
\def\K#1.{K_{#1}}
\def\bM#1.{\mathbf{M}_{#1}}
\def\bN#1.{\mathbf{N}_{#1}}
\def\subs#1.{_{#1}}					
\def\sups#1.{^{#1}}

\theoremstyle{plain}

\title[]
{On the boundedness of elliptic Calabi--Yau 4-folds}

\author[S. Filipazzi]{Stefano Filipazzi} 
\author[F. Xu]{Fulin Xu}

\subjclass[2020]{Primary: 14E30, 14J27, 14J32;
Secondary: 14D06.}

\keywords{Calabi--Yau 4-folds, elliptic fibrations, boundedness, minimal model program, Morrison--Kawamata cone conjecture.}

\thanks{SF was partially supported by ERC starting grant  \#804334 and subsequently by
Duke University.}

\address{Department of Mathematics, Duke University,
120 Science Drive,
117 Physics Building,
Campus Box 90320,
Durham, NC 27708-0320,
USA}
\email{stefano.filipazzi@duke.edu}

\address{Qiuzhen College, Tsinghua university,  
Haidian district, 100084, Beijing, China}
\email{xufl23@mails.tsinghua.edu.cn}

\begin{document}

    \begin{abstract}
        In this work, we settle the boundedness of a vast class of elliptic Calabi--Yau 4-folds.
        In particular, we show that any elliptic Calabi--Yau 4-fold that is not crepant to the quotient of a product $Y \times E$, where $E$ is an elliptic curve and $Y$ a Calabi--Yau 3-fold, belongs to finitely many algebraic families.
        In particular, the statement applies whenever the elliptic fibration of the Calabi--Yau 4-fold is not isotrivial.
        We also provide partial evidence for the boundedness of the middle Betti number of Calabi--Yau 3-folds and study the index of fibered $K$-trivial 4-folds.
    \end{abstract}

\maketitle

\vspace{-0.3cm}

\tableofcontents

\section{Introduction}

Projective varieties with mild singularities and numerically trivial canonical bundle (in short, {\it $K$-trivial varieties}) form one of the fundamental classes of algebraic varieties.
In dimensions 1 and 2, $K$-trivial varieties are fully understood.
In dimension 1, they coincide with elliptic curves.
In dimension 2, smooth $K$-trivial varieties are completely characterized by the Enriques--Kodaira classification
\cite{Enriques,Kod1,Kod2,Kod3,Kod4}, 
and they fall into four possible cases:
K3 surfaces, abelian surfaces, Enriques surfaces, and bielliptic surfaces.
In particular, K3 surfaces and abelian surfaces belong to finitely many deformation families of (not necessarily algebraic) compelx surfaces, while Enriques and bielliptic surfaces belong to finitely many algebraic families.
Then, $K$-trivial surfaces with du Val singularities can be studied through their minimal resolution, which is a smooth $K$-trivial surface.
Singular $K$-trivial surfaces with worse than du Val singularities belong to finitely many algebraic families due to work of Alexeev \cite{Ale94}.

In higher dimensions, very little is known about the possible deformation classes of $K$-trivial varieties.
To this end, it is helpful to focus first on some distinguished classes of $K$-trivial varieties.
Indeed, due to the {\it Beauville--Bogomolov decomposition}, any $K$-trivial variety admits a quasi-\'etale cover that splits as product of {\it abelian varieties}, {\it Calabi--Yau varieties}, and {\it irreducible symplectic varieties};
see \cite{Bog74, Beau83, GKP16, Dru18, GGK19, HP19}.
In any dimension, abelian varieties belong to a unique complex deformation family, the one of complex tori.
Irreducible symplectic varieties are the higher dimensional counterpart of (singular) K3 surfaces.
While a general classification of irreducible symplectic varieties is at the moment out of reach,
the recent work \cite{EFGMS}
together with the hyperk\"ahler SYZ conjecture--see 
\cite{MR2585581}
and references therein--provide evidence towards the expectation that irreducible symplectic varieties belong to finitely many complex deformation families in each dimension.
On the other hand, Calabi--Yau varieties remain elusive already in dimension 3.

A fundamental question in 3-fold geometry, originally due to Reid and Yau, asks whether smooth Calabi--Yau 3-folds have finitely many topological types;
see, e.g., \cite{MR909231,MR2537089}.
In algebraic geometry, a natural approach to this question is to prove the stronger statement that smooth Calabi--Yau 3-folds form finitely many algebraic families.
While this question remains open, in recent years there has been a lot of progress in understanding special classes of Calabi--Yau varieties.
In \cite{Gro94}, Gross showed that elliptic Calabi--Yau 3-folds are {\it birationally bounded}.
Said otherwise, there are finitely many algebraic families such that every elliptic Calabi--Yau 3-fold is birational to a fiber of one of said families.
The study of elliptically fibered Calabi--Yau varieties was then revived by works of Di Cerbo and Svaldi, and Birkar, Di Cerbo, and Svaldi \cite{DCS21,BDCS24}, where the authors focused on the case of fibrations admitting a rational section.
As in the work of Gross, these results settle the birational boundedness of the class under consideration.
Lastly, Englel, Filipazzi, Greer, Mauri, and Svaldi settled the birational boundedness of Calabi--Yau varieties fibered in abelian varieties in any dimension \cite{EFGMS}.

It is known that a Calabi--Yau variety may have infinitely many marked minimal models and at least finitely many distinct unmarked minimal models;
see, e.g., \cite{fryers2001movablefanhorrocksmumfordquintic}.
In particular,
two non-isomorphic Calabi--Yau varieties can be birational.
Thus,
while the above results succeed in classifying the birational class of fibered Calabi--Yau varieties, their methods fail in distinguishing isomorphism classes.
In particular, the methods from the MMP are not well suited to distinguish two birational Calabi--Yau varieties, as they are connected by a sequence of flops \cite{Kaw08}.

The Morrison--Kawamata cone conjecture provides a clear expectation for the structure of the nef and movable cones of a Calabi--Yau variety \cite{Mor93,Mor96,Kaw97}.
In particular, it predicts that every Calabi--Yau variety is birational to only finitely many other Calabi--Yau varieties, up to isomorphism.
The cone conjecture is fully settled only in dimension 2 \cite{Sterk,Namikawa,Kaw97,Tot10}, while only special cases are known in higher dimensions;
see, e.g., \cite{Mar11,PS12,Ogu14,LP13,PS23,FXu24,MQ24,GLSW24,Gac25}.

The perspective provided by the cone conjecture can be used to enhance statements regarding the birational boundedness of Calabi--Yau varieties to full boundedness statements;
see, e.g., \cite[\S~5]{FS20}.
To this end, the relative version of the conjecture, first intruduced by Kawamata in \cite{Kaw97}, plays a central role.
In \cite{FHS21}, Filipazzi, Hacon, and Svaldi first proved this perspective fruitful by promoting the results of Gross in \cite{Gro94} to a boundedness statement for elliptic Calabi--Yau 3-folds.
Further recent developments studying the cone conjecture in a relative setting include \cite{LZ25,Li23,MS24,Lutz24,CLLZ25}.

This work follows the philosophy of \cite{FHS21} and settles the boundedness of a vast class of elliptic Calabi--Yau 4-folds.
More precisely, we prove the following statement.

\begin{theorem}[cf.~\autoref{main_thm_technical}]\label{intro_thm_CY}
    Let $\mathfrak F _{CY,ell,\sigma}^4$ denote the set of elliptic Calabi--Yau 4-folds not of product type.\footnote{We say that an elliptic Calabi--Yau variety $X$ with elliptic fibration $X \to Y$ is of {\it product type} if $X$ is crepant birational to a quotient $(Z \times F)/G$, where $F$ is an elliptic curve and $G$ is a finite group acting componentwise, and the fibration $X \to  Y$ is birationally equivalent to $(Z \times F)/G \to Z/G$.
    In particular, \autoref{intro_thm_CY} applies whenever the elliptic fibration is not isotrivial.}
    Then, the set $\mathfrak F _{CY,ell,\sigma}^4$ is bounded.
    That is, elliptic Calabi--Yau 4-folds not of product type appear as fibers of finitely many algebraic families.
\end{theorem}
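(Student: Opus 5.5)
The strategy follows \cite{FHS21}: first prove birational boundedness of the class, then promote it to boundedness using the relative Morrison--Kawamata cone conjecture for the elliptic fibrations. This works because every Calabi--Yau variety in a fixed birational class is connected to the others by flops \cite{Kaw08}.

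\textbf{Step 1: the bases.} Let $f\colon X\to Y$ be an elliptic Calabi--Yau 4-fold. By the canonical bundle formula, $K_X\sim_{\QQ} f^*(K_Y+B_Y+\mathbf M_Y)$ with $K_Y+B_Y+\mathbf M_Y\equiv 0$. The base $Y$ is a rationally connected 3-fold, or more generally a $K$-trivial-type pair, carrying a generalized klt structure with $(B_Y,\mathbf M)$ of controlled coefficients; the moduli part is governed by the $j$-map and is a multiple of $j^*\mathcal O(1)$. If the fibration is not of product type, then after a base change the $j$-map is nonconstant or the monodromy is nontrivial enough. The first task is to show that the bases $(Y,B_Y+\mathbf M_Y)$ form a log bounded family up to crepant birational modification, by combining boundedness of complements and BAB-type results (Birkar) for generalized pairs. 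The key point is that the product-type exclusion rules out the unbounded case where $Y$ is itself an arbitrary $K$-trivial 3-fold with $B_Y=0$ and $\mathbf M$ trivial.

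\textbf{Step 2: birational boundedness.} Over a bounded base, bound the elliptic fibration itself. Pass to the Jacobian fibration, which is bounded via Weierstrass models over the bounded base with bounded discriminant. Then use the Tate--Shafarevich-type analysis of \cite{Gro94} and \cite{BDCS24} to bound the multisection degree, and hence $X$ up to birational equivalence. This yields finitely many families $\mathcal X_i\to\mathcal Y_i\to T_i$ such that every $X$ is crepant birational to a fiber, compatibly with the fibration.

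\textbf{Step 3: from birational to actual boundedness.} Two minimal models of $X$ over $Y$ are connected by relative flops. Apply the relative cone conjecture for elliptic fibrations of relative dimension one, in the form of \cite{FHS21} as extended in later work: the relative movable cone $\mathrm{Mov}(X/Y)$ admits a rational polyhedral fundamental domain under the action of relative birational automorphisms, for instance those generated by the Mordell--Weil group of the generic fiber. Consequently there are finitely many relative minimal models over $Y$ up to isomorphism, and these can be realized in families after stratifying $T_i$ and running a relative MMP in families. We also have to handle the fact that $Y$ itself is only bounded up to flops. For this we run the cone conjecture argument on the base as well, using that it is a log Calabi--Yau 3-fold pair of a bounded type with $\mathbf M$ big, so that it has finitely many models. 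Alternatively, we use that a minimal model $X$ carries an elliptic fibration whose base is a determined model of $Y$, namely an ample model of $\mathbf M_Y$ plus a small perturbation.

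\textbf{Main obstacle.} I expect the hardest part to be Step 3, specifically establishing the relative cone conjecture uniformly in families and controlling the non-uniqueness of the base model. In dimension 4 the base is a 3-fold, and unlike the surface base in \cite{FHS21} it is not determined up to finitely many choices by elementary arguments. The first approach I would try is to reduce to the generic fiber over the base: there the cone conjecture for genus-one curves over a function field follows from finite generation of the Mordell--Weil group. I would then spread the resulting fundamental domain out over an open subset of $T_i$, and handle the remaining strata by Noetherian induction.
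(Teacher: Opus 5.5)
Your proposal has a genuine gap at the step where the non-product-type hypothesis is actually needed: upgrading the bases from ``bounded up to crepant birational modification'' (your Step 1) to honestly bounded. The relative cone conjecture for the elliptic fibration only controls the models of $X$ over a \emph{fixed} base. But every marked small $\mathbb{Q}$-factorial modification of the base pair $(Y,\Delta)$, and every birational contraction of one, is again the base of an elliptic fibration on a Calabi--Yau 4-fold crepant to $X$ (compare the use of \cite[Proposition 2.9]{Fil24} in \autoref{lemma:orbibundle_structure}). So you need finitely many such base models up to automorphism, uniformly in families. That is a relative movable cone conjecture for klt log Calabi--Yau 3-fold pairs, and neither of your fixes supplies it.

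First, $\mathbf{M}$ is never big here. $12\mathbf{M}$ is the pullback of $\mathcal{O}_{\mathbb{P}^1}(1)$ under the $j$-map, so $\kappa(\mathbf{M})\le 1<3=\dim Y$. Moreover $\mathbf{M}\equiv 0$ whenever the fibration is isotrivial, which the hypothesis does not rule out in general. Even $-K_Y\equiv B_Y+\mathbf{M}_Y$ is big only when $\kappa(-K_{Y'})=3$, the Fano-type case. The same remark undercuts the BAB route in Step 1. Second, $Y$ is not a ``determined'' model attached to $X$: the base of an elliptic fibration on $X$ is the ample model of a chosen semiample class, and it ranges over exactly the models just described.

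The paper handles this in \autoref{log boundedness Iitaka dim > 0}, via \autoref{cor_bdd_bases}:
\begin{enumerate}
\item choose a bounded-index klt complement $(Y,\Delta)$ by \cite[Theorem~5.4]{BDCS24};
\item get log boundedness in codimension one from \cite[Theorem~1.5]{BDCS24};
\item pass to bounded $\mathbb{Q}$-factorial terminal models \cite{Bir22} and deform movable classes by \autoref{deformation lemma};
\item invoke the relative movable cone conjecture in the family, from \cite{LZ22,FXu24} when $\kappa(-K_{Y'})\ge 1$, and from \cite{FXu25} together with \autoref{bb_in_family} when $\tilde q(Y)>0$;
\item recover each base as a fiber by running an MMP in families, using \cite[Lemma~3.1]{HMX18} and \autoref{lemma commutativity of specialization}.
\end{enumerate}

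The hypothesis enters through \autoref{lemma:orbibundle_structure}. If $\kappa(-K_{Y'})=0$, then $X$ is crepant to a $K$-trivial orbibundle, whose base is a quotient of a $K$-trivial 3-fold; by \autoref{prop:rc} it is rationally connected, never an ``arbitrary $K$-trivial 3-fold''. There the cone conjecture is open unless $\tilde q(Y)>0$.

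In your write-up the hypothesis is spent on birational boundedness instead, which does not need it: see \cite[Theorem~A]{EFGMS}, which also replaces your Step 2. The step where the hypothesis is indispensable is covered by an incorrect argument. Conversely, what you single out as the main obstacle, the relative cone conjecture for the elliptic fibration over a bounded base in families, is already available as \cite[Theorem~6.18]{FHS21}. That result rests on Kawamata's Mordell--Weil argument, much as you sketch.
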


\autoref{intro_thm_CY} immediately yields the finiteness of the topological
types of many elliptic Calabi--Yau 4-folds, which has relevant consequences in string theory;
see, e.g., \cite{Martucci:2022krl}.

\subsection{Additional results}

\subsubsection{Hodge bundle for families of CY 3-folds}
To a family $f \colon X \to Y$ of $K$-trivial varieties, one can associate a $\mathbb Q$-line bundle $\bM Y.$ on $Y$, called {\it Hodge bundle}.
Classically, this object is defined via Hodge-theoretic methods--see, e.g., \cite{Kol07}--but it can also be defined using methods of birational geometry as introduced in \cite{FM00,PS09}.
By \cite{BFMT}, up to taking a suitable resolution of $Y$, $\bM Y.$ is semiample and its Kodaira dimension measures the variation of the fibers of $f$;
see also \cite{Fuj03,Amb04,Amb05}.

If $f$ is an elliptic fibration, work of Kodaira and Fujita shows that $\mathcal{O}_Y(12 \mathbf M _Y) \simeq j^*\mathcal{O}_{\mathbb P ^1}(1)$, where $Y$ is a model where the $j$-map $j \colon Y \dashrightarrow \mathbb P ^1$ is a morphism;
see \cite{Kod1,Kod2,Fuj86}.
In general, Prokhorov and Shokurov predicted that there is an integer $C>0$, only depending on $\dim X - \dim Y$, such that $|C \bM Y.|$ is a basepoint free linear series.
Then, the prediction also extends to fibrations in log Calabi--Yau pairs, where the boundary coefficients also play a role;
see \cite[Conjecture 7.13.3]{PS09}.
At the moment, this conjecture is known in relative dimension up to 2 by combining \cite{Fuj03,PS09,Fil20,babwild,EFGMS}.

In general, for a fixed relative dimension $n=\dim X - \dim Y$, already the weaker property of finding a uniform $C'>0$ such that $C' \bM Y.$ is integral is open.
By \cite[\S~3]{FM00}, the existence of such a constant $C'$ is implied by a bound on the middle Betti numbers $K$-trivial $n$-folds.
This is a very hard question that is wide open already in dimension 3.

As replacement of a bound on the size of the Betti numbers, we consider
another important invariant of a $K$-trivial variety $X$:
its {\it index $i(X)$}, defined as the smallest positive integer $I$ such that $I \cdot K_X \sim 0$.
It is expected that, in every dimension, the index of a $K$-trivial variety--or, more generally, of a log Calabi--Yau pair with fixed coefficients set--can attain only finitely many values;
see, e.g., \cite{Ale94,MR2075967,YXu}.
The index conjecture is settled in dimension up to 3 by contributions of several mathematicians;
see \cite{Kaw86,Mor86,Ale94,PS09,Jia21,JL21,YXu}.

In this work, we utilize the index conjecture for semi-log canonical 3-folds, settled in \cite{YXu}, as replacement of a bound on the Betti numbers of Calabi--Yau 3-folds.
Then, combining it with methods developed by Birkar in \cite{Bir19}, we can show the following theorem:

\begin{theorem}[{cf. \autoref{thm_cartier_index}}]\label{intro:thm_cartier_index}
    Let $f \colon X \to Y$ be a fibration such that the general fiber is a 
    log canonical
    $K$-trivial 3-fold, and let $\bM.$ denote the Hodge bundle of the fibration (as b-divisor).
    Then,  there is a positive integer $I > 0$ such that $I \cdot \mathbf M$ is b-Cartier (in particular, integral on every birational model of $Y$).
\end{theorem}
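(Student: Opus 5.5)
We follow Birkar's strategy in \cite{Bir19} for bounding the Cartier index of the moduli part, with the index conjecture for semi-log canonical 3-folds \cite{YXu} in place of a bound on Betti numbers.

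\textbf{Reductions.} First we reduce to the case where $Y$ is a curve. By the b-divisor formalism, $\mathbf M$ descends to a sufficiently high model $Y'$ and is nef there \cite{Amb04,Amb05}. b-Cartierness of $I\mathbf M$ is a statement about the Cartier index of $\mathbf M_{Y'}$, and it can be tested by restricting to general curves through each codimension one point (or along suitable covering families of curves). The restriction is compatible with base change, since $\mathbf M$ is b-Cartier and base-change stable after a log resolution. Next we replace $Y$ by a finite (Kawamata-type) cover on which $f$ has semistable, or at least slc, reduction in codimension one. Pulling back along a finite cover multiplies Cartier indices by at most the degree, so we must control that degree. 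To avoid doing so, we track the index directly on the slc models.

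\textbf{Key step.} Over a curve $C$, with a point $c$ where the fiber is degenerate, run the MMP to get a model $X \to C$ where $(X, X_c)$ is dlt/lc with $K_X + X_c \sim_{\mathbb Q} f^*(K_C + c + \mathbf M_C)$ after adjusting by the discriminant. Adjunction to the reduced fiber $X_c$ gives a 3-dimensional slc pair $(X_c, \Theta)$ with $K_{X_c} + \Theta \sim_{\mathbb Q} 0$. Its coefficients lie in a fixed DCC (indeed finite, hyperstandard) set, because the general fiber is $K$-trivial and lc.

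By \cite{YXu} there is a uniform $N$, depending only on this coefficient set, such that $N(K_{X_c}+\Theta)\sim 0$. This gives a nowhere-vanishing section of $\mathcal O(N(K_{X/C}+X_c))$ near $c$, and from it we get integrality of $N\mathbf M_C$ at $c$ up to the lc threshold contribution. The threshold lies in a finite set by ACC for lc thresholds, so after multiplying by a bounded factor, $N'\mathbf M$ is Cartier near $c$. This is exactly the role of the bounded Betti number in \cite[\S3]{FM00}. In that approach the denominators came from eigenvalues of monodromy on $H^3$, and here we bypass them via Birkar's argument, which expresses $\mathbf M$ through a global generator of the relative pluricanonical sheaf.

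\textbf{Gluing and the main obstacle.} Finally, glue the local statements. For a general point $y$ the fiber itself is $K$-trivial lc, with index bounded by \cite{YXu} in dimension $3$. This bounds the index of $\mathbf M$ generically, and combined with the above it bounds it everywhere.

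The main obstacle is the key step. We must show that the degenerate fibers of the slc model have coefficients in a fixed finite set. We must also ensure that the resulting $N$ controls $\mathbf M$ rather than only the discriminant, since the lc threshold and the Hodge part interact. To handle this we would first try Birkar's trick of adding a general relatively ample divisor to make the fibration a family of stable pairs with bounded volume. We would then use boundedness of the moduli of such stable pairs, so that a universal family realizes all degenerations.
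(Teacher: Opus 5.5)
\textbf{The gap is the lifting step.} In your key step you write that $N(K_{X_c}+\Theta)\sim 0$ ``gives a nowhere-vanishing section of $\mathcal O(N(K_{X/C}+X_c))$ near $c$.'' This is not automatic, and it is where all the work lies.

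On the model where the lct-fiber is reduced, $N(K_X+X_c)$ is only a Weil divisor on a singular total space. Its Cartier index along $X_c$ is exactly what you are trying to bound, so a trivialization on the slc fiber has to be lifted. The paper does this on a log resolution $p\colon Z\to X'$, with $S$ the reduced lc places and $L=-mK_Z-mS-\lfloor (m+1)B\rfloor$ as in \cite{Bir19}, via the sequence $0\to\mathcal O_Z(L-S)\to\mathcal O_Z(L)\to\mathcal O_S(L|_S)\to 0$. This requires $R^1(f'\circ p)_*\mathcal O_Z(L-S)=0$ near $c$. By Koll\'ar's torsion-freeness \cite[Theorem 1.1]{Fuj09}, that vanishing is decided at the generic point of $Y$. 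When it fails, the general fiber has positive irregularity.

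This case genuinely occurs, for example for families of abelian $3$-folds or of $E\times S$ with $S$ a K3 surface. So the lift is not merely unproved in your write-up; the standard mechanism for it breaks down on a nonempty class of fibrations.

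\textbf{What the paper does instead.} In the irregular case the paper switches to a different argument. By \cite{Xu20}, the index-one cover of the general fiber is a quotient $(F\times B)/G$ with $B$ abelian. Hence a smooth model has bounded Betti numbers (\autoref{lemma:betti_number_obibundle}), and then the Fujino--Mori/Floris bound \cite[Theorem 5.1]{Flo12}, in terms of the middle Betti number and the index, applies. So Betti numbers are not bypassed altogether, only in the case where the lifting works.

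Your proposed remedy for the ``main obstacle'' does not supply this missing case. Adding a relatively ample divisor and invoking boundedness of moduli of stable pairs would require boundedness of polarized $K$-trivial $3$-folds with bounded volume. That is open, and there is no a priori volume bound. Birkar's trick works in \cite{Bir19} only because Fano fibers are bounded.

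\textbf{Two smaller points.}
\begin{enumerate}
\item ACC for lc thresholds does not put $t=\mathrm{lct}(X;f^*c)$ in a finite set; already for elliptic fibrations the values $1/m$ from multiple fibers are infinitely many. You do not need finiteness, though. Following \cite[Proposition 6.3]{Bir19}, one adds $t_y f^*y$ on $X$ and $t_y y$ on $Y$, and $K_Y+\Theta_Y$ then has coefficient $(1-t_y)+t_y=1$ at $y$, so the thresholds contribute no denominators.
\item Your lct construction implicitly assumes the generic fiber is klt; the paper first makes $X$ terminal and $\mathbb{Q}$-factorial. The lc but non-klt case is treated separately in \autoref{prop_cartier_index_lc}: one runs a $(K_X+\Delta^{<1})$-MMP to a Mori fiber space and uses effective b-semiampleness in relative dimension at most $2$.
\end{enumerate}
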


While \autoref{intro:thm_cartier_index} bypasses the use of an estimate on the middle Betti number of Calabi--Yau 3-folds, it can be considered as a first general evidence that Betti numbers of Calabi--Yau 3-folds shall indeed be bounded.

We refer to \autoref{thm_cartier_index} for a complete version of \autoref{intro:thm_cartier_index} and to \autoref{thm:effective_freeness} for some cases of the effective freeness conjecture of Prokhorov--Shokurov.

\subsubsection{Index of fibered $K$-trivial 4-folds}
As immediate application of \autoref{intro:thm_cartier_index}, we study some cases of the index conjecture for $K$-trivial varieties in dimension 4.
In dimension 4, the conjecture is known for log canonical pairs that are not canonical by \cite{JL21,Jiao24} and for smooth 4-folds by \cite{MR4903129}.
In particular, in dimension 4, only the case of non-smooth canonical $K$-trivial varieties is open.

In this work, we show that the presence of a fibration can be exploited to study the index of a $K$-trivial variety, even when it is not possible to settle stronger statements such as boundedness.
In particular, we settle the following special case of the index conjecture in dimension 4.

\begin{theorem}[{cf. \autoref{index_thm_pairs}}]\label{intro_thm_index}
    There exists a positive integer $C > 0$ such that the following holds.
    Let $X$ be a projective log canonical
    4-fold with $K_X \equiv 0$.
    Further assume that $X$ admits a fibration $f  \colon X \to Y$ with $\dim Y > 0$.
    Then, the index $i(X)$ divides $C$, i.e., $C \cdot K_X \sim 0$ holds.
\end{theorem}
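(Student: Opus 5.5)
We reduce to the canonical bundle formula for $f$ and split according to the relative dimension $n=\dim X-\dim Y\in\{1,2,3\}$ (the case $\dim Y=4$ is birational, so $f$ is an isomorphism up to Stein factorization and may be discarded; we may also assume $f$ has connected fibers by taking the Stein factorization). First, if $X$ is not klt, or more generally strictly log canonical, we invoke \cite{JL21,Jiao24}, which already bound the index in dimension 4; so we may assume $X$ is klt, and after passing to a $\mathbb Q$-factorial terminalization (which is crepant and does not change the index) we may assume $X$ is terminal and $\mathbb Q$-factorial. Since $K_X\equiv 0$, the general fiber $F$ is a klt $K$-trivial variety of dimension $n\le 3$, and by the canonical bundle formula we write $0\sim_{\mathbb Q} K_X \sim_{\mathbb Q} f^*(K_Y+B_Y+\mathbf M_Y)$, where $(Y,B_Y+\mathbf M_Y)$ is a generalized klt pair with $K_Y+B_Y+\mathbf M_Y\equiv 0$ and $\dim Y=4-n\le 3$.

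The key step is to control the relevant Cartier indices uniformly. The coefficients of $B_Y$ lie in a fixed DCC (in fact hyperstandard-type) set depending only on $n$, since they are computed from lc thresholds of fibers and the index of $K_F$ is bounded for $n\le 3$ by \cite{YXu} together with the lower-dimensional results. For the moduli part, \autoref{intro:thm_cartier_index} (and its lower-dimensional analogues, e.g.\ Kodaira's $12\mathbf M_Y$ for elliptic fibrations and \cite{Fil20} for relative dimension 2) gives a uniform $I$ with $I\mathbf M$ b-Cartier, descending to a higher model $Y'$. Following Birkar's strategy in \cite{Bir19}, we would refine this to the statement that $I\,(K_X) \sim I f^*(K_Y+B_Y+\mathbf M_Y)$ as genuine divisors (not only $\mathbb Q$-linearly), by choosing the relative index of $K_X$ over $Y$ equal to the bounded index of $K_F$ and arguing via the cyclic cover associated to $i(F)K_F\sim 0$ over the generic point.

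Next, we apply the index conjecture in dimension $\dim Y\le 3$ for generalized pairs $(Y,B_Y+\mathbf M_Y)$ with coefficients in a fixed finite (after ACC/global ACC reduction) set and with $I\mathbf M$ b-Cartier: this is available by \cite{YXu} and \cite{Jia21,JL21} in dimension at most 3, and gives $C_1$ with $C_1(K_Y+B_Y+\mathbf M_Y)\sim 0$. Pulling back yields $C_1 I\, K_X\sim 0$ up to the discrepancy between $f^*$ of the Cartier divisor and the Weil divisor $K_X$, which is handled by the previous step; we set $C$ to be a common multiple of the finitely many constants arising for $n=1,2,3$.

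The main obstacle is the passage from the $\mathbb Q$-linear canonical bundle formula to an integral linear equivalence: the canonical bundle formula is intrinsically only $\mathbb Q$-linear, and one must show the torsion ambiguity is uniformly bounded. I would attack this by showing that the $\mathbb Q$-divisor $K_X - f^*(K_Y+B_Y+\mathbf M_Y)$ (with $\mathbf M$ defined via a fixed choice of rational function trivializing $i(F)K_F$) is linearly equivalent to zero after multiplying by the bounded constant $I\cdot i(F)$, using that torsion on fibers is controlled by the index of $F$ and that $B_Y$ has bounded denominators.
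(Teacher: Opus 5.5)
\paragraph{The base step is the gap.} Your outline matches the paper's proof of \autoref{index_thm_pairs}. Both use reductions via \cite{JL21,Jiao24}, a terminalization, and a uniform b-Cartier index for $\mathbf M$ (from \autoref{thm_cartier_index} in relative dimension $3$). Both then use the integral formula $IK_X\sim If^*(K_Y+B_Y+\mathbf M_Y)$, DCC coefficients of $B_Y$ by ACC for log canonical thresholds \cite{HMX14}, and an index bound on the base. The last step, however, is not justified as you state it. You invoke an index bound for generalized klt log Calabi--Yau pairs of dimension at most $3$ whose only hypotheses are that the coefficients of $B_Y$ lie in a finite set and that $I\mathbf M$ is b-Cartier. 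No reference can supply this, because it is false. Take an elliptic curve $E$ and a torsion line bundle $L$ of order $N$. Then $(E,0,\overline{L})$ is generalized klt, $\overline{L}$ is b-Cartier (even b-semiample), $K_E+L\equiv 0$, and its index is $N$. Since $f^*$ is injective on Picard groups, once denominators are bounded, finding $C$ with $C(K_Y+B_Y+\mathbf M_Y)\sim 0$ is essentially equivalent to bounding the torsion order of $K_X$ itself. So this step carries the real content, and the b-Cartier index of $\mathbf M$ does not see it. The paper does not take a generalized-pair index theorem from \cite{YXu,Jia21,JL21}. It uses the index conjecture for klt pairs in dimension at most $3$ and passes to $(Y,B_Y,\mathbf M)$ through \cite[Lemma~2.30]{FFMP}. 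You need a bridge of this kind. By the example, it has to use information on $\mathbf M$ or on $Y$ beyond the two properties you list, and you would have to verify that information for the moduli part at hand.

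\paragraph{Two smaller corrections.} First, reducing only to klt $X$ is not enough. If $X$ is klt but not canonical, a $\mathbb Q$-factorial terminalization is a pair $(X',\Delta')$ with $\Delta'\neq 0$ and coefficients in $\frac{1}{i(X)}\mathbb Z$. These do not lie in any fixed DCC set, so the horizontal multiplicities controlling the constant of \autoref{thm_cartier_index}, and the coefficients of $B_Y$, are no longer uniformly controlled. The paper first applies \cite[Corollary~1.4]{Jiao24} to reduce to canonical $X$, after which the terminalization has zero boundary. Second, the step you single out as the main obstacle is automatic. In \cite[\S~7.5]{PS09} the moduli part is defined by fixing a rational function that trivializes $IK_X$ over the generic point of $Y$, where $I$ is the index of the generic fiber. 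So $IK_X\sim If^*(K_Y+B_Y+\mathbf M_Y)$ holds by construction, with $I$ bounded by the index conjecture in dimension $3$. This is exactly how the paper uses the canonical bundle formula, and it makes your cyclic-cover argument unnecessary.
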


\subsubsection{Iitaka volume of log canonical 4-folds}
A second immediate application of \autoref{intro:thm_cartier_index} pertains to the study of the Iitaka volume of log canonical 4-fold pairs $(X,\Delta)$.
If $(X,\Delta)$ is a projective log canonical pair such that $h^0(X,\mathcal{O}_X(\lfloor m(K_X+\Delta)\rfloor))>0$ for some $m \geq 1$, then the rate of growth in $m$ (for $m$ sufficiently divisible) is asymptotically a polynomial of degree at most $\dim X$.
Said degree is the Kodaira dimension of the pair $(X,\Delta)$.
Thus, $\kappa$ is the least non-negative integer satisfying $h^0(X,\mathcal{O}_X(m(K_X+\Delta))) = O( m^\kappa)$ and $0 \leq \kappa \leq \dim X$.
Then, the Iitaka volume of $(X,\Delta)$ is defined as $\mathrm{vol}_\kappa (X,\Delta) = \limsup \frac{h^0(X,\mathcal{O}_X(\lfloor m(K_X+\Delta)\rfloor))}{m^\kappa / \kappa !}$.
In particular, together with the Kodaira dimension, the Iitaka volume is an invariant that measures the rate of growth of the pluricanonical sections of $(X,\Delta)$.

If $\kappa = \dim X$, one retrieves the usual notion of volume of $(X,\Delta)$.
Work of Hacon--McKernan--Xu showed that the arithmetic properties of the possible volumes $\mathrm{vol}_{\dim X}(X,\Delta)$ are crucial in the study of the birational geometry of varieties of general type;
see \cite{HMX13,HMX18}.
In particular, extending work of Alexeev in dimension 2, they showed that, under natural assumptions, the possible values form a set satisfying the descending chain condition (DCC);
see \cite{Ale94,HMX18}.

Motivated by this result, Li introduced an analogous conjecture for the Iitaka volume of pairs of intermediate Kodaira dimension \cite[Conjecture~1.6]{Li24}.
This conjecture is known up to dimension 3 by work of Chen--Han--Li \cite{CHL25};
see also \cite{Bir21} for related results.

\autoref{intro:thm_cartier_index} allows us to prove \cite[Conjecture~1.6]{Li24} conjecture in dimension 4.
In particular, we obtain the following:
\begin{theorem}[{cf. \autoref{iitaka volume d-3} and \autoref{cor:iitaka_volume}}]
    Let $\Gamma \subset [0,1]$ be a DCC set of rational numbers, and let $d$ and $\kappa$ be non-negative integers satisfying $\kappa \leq d$.
    Consider
    $$
    \mathfrak{P}^{\Gamma}_{\mathrm{lc}}(d,\kappa) = \{(X,\Delta)|(X,\Delta) \text{ \rm is a projective lc pair},\dim X = d, \kappa(X,\Delta)=\kappa,\mathrm{coeff(\Delta)\subset \Gamma} \}
    $$
    and
    $$
    \mathfrak{V}^\Gamma_{\textrm{lc}}(d,\kappa) = \{\mathrm{vol}_\kappa(X,\Delta)|(X,\Delta) \in \mathfrak{P}^{\Gamma}_{\mathrm{lc}}(d,\kappa) \}.
    $$
    Then, $\mathfrak{V}^\Gamma_{\textrm{lc}}(d,\kappa)$ is a DCC set for all $\kappa \geq d-3$.
    In particular, $\mathfrak{V}^\Gamma_{\textrm{lc}}(d,\kappa)$ is a DCC set for $d \leq 4$.
\end{theorem}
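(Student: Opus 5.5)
We will prove the DCC statement by passing to the Iitaka fibration and reducing to the known DCC for volumes of generalized pairs of general type. First we reduce to the case where $K_X+\Delta$ is semiample. Fix $(X,\Delta)\in\mathfrak P^\Gamma_{\mathrm{lc}}(d,\kappa)$. We run the log MMP and use that abundance holds for lc pairs whose general Iitaka-fiber has dimension at most $3$. This gives a good minimal model $(X',\Delta')$ that does not change $\mathrm{vol}_\kappa$. Let $f\colon X'\to Y$ be the ample model, so $\dim Y=\kappa$ and $K_{X'}+\Delta'\sim_{\mathbb Q} f^*L$ with $L$ ample. This step needs abundance for fibers of dimension $d-\kappa\le 3$ in the relative setting. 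The relative statement follows from three-dimensional lc abundance together with the results of Fujino/Hashizume on relative good minimal models when the general fiber has a good minimal model.

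Next we apply the canonical bundle formula. We write $K_{X'}+\Delta'\sim_{\mathbb Q} f^*(K_Y+B_Y+\mathbf M_Y)$, which makes $(Y,B_Y+\mathbf M)$ a generalized lc pair with $K_Y+B_Y+\mathbf M_Y$ big, in fact ample. Then
\[
\mathrm{vol}_\kappa(X,\Delta)=\mathrm{vol}(K_Y+B_Y+\mathbf M_Y)
\]
up to the constant relating $h^0$ growth, which is exactly $1$ with the normalization $m^\kappa/\kappa!$ once we take sufficiently divisible $m$. To get DCC for volumes of generalized pairs (Birkar--Zhang), we need two inputs.

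\begin{enumerate}
\item The coefficients of $B_Y$ lie in a DCC set. This follows from the DCC of lc thresholds (HMX) applied to the generic fibers over codimension-one points, since the fibers have dimension $\le 3$ and coefficients in $\Gamma$.
\item There is a fixed $I$ such that $I\mathbf M$ is b-Cartier, so that the nef part has bounded Cartier index. This is the role of \autoref{intro:thm_cartier_index} in its complete form \autoref{thm_cartier_index}, which we assume applies to lc log Calabi--Yau fibers of dimension $\le 3$ with coefficients in $\Gamma$. The relative dimension $d-\kappa\le 2$ is covered by the known cases of the relative-dimension-$\le 2$ results cited earlier.
\end{enumerate}

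With both inputs in hand, the generalized-pair DCC theorem for volumes gives that the values $\mathrm{vol}(K_Y+B_Y+\mathbf M_Y)$ form a DCC set. This proves the statement for $\kappa\ge d-3$. The case $d\le4$ then follows because $\kappa\ge0\ge d-4$ leaves only $\kappa=0$, $d=4$ outstanding. There $\mathrm{vol}_0$ is the limsup of $h^0\in\{0,1\}$ along divisible $m$, hence equal to $1$, which trivially satisfies DCC.

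The main obstacle is input (2) above. For a genuinely DCC (not finite) coefficient set, the moduli part depends on the boundary, and \autoref{thm_cartier_index} must be applied in a form uniform over a DCC set. The plan is to first use ACC for lc thresholds and global ACC to replace $\Gamma$ near the horizontal part by a finite set. This is possible because $(F,\Delta_F)$ is lc Calabi--Yau, so the horizontal coefficients of $\Delta$ lie in a finite set. With the horizontal part controlled, we can invoke the index result for lc Calabi--Yau pairs of dimension $\le3$ with finite coefficients.
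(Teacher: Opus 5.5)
Your proposal is correct and follows essentially the same route as the paper: pass to a model of the Iitaka fibration that is log Calabi--Yau over its base, apply the canonical bundle formula, get DCC coefficients for $B_Y$ from ACC of log canonical thresholds, bound the b-Cartier index of $\mathbf M$ via global ACC on the horizontal coefficients together with \autoref{thm_cartier_index}, and conclude by DCC of volumes of generalized pairs, with $\kappa=0$ trivial. The differences are minor: the paper handles $\kappa\ge d-2$ by citing Chen--Han--Li, and in the lc non-klt case it builds only a relative good minimal model over a birational model of the Iitaka base (after making the fibration equidimensional), not the global good minimal model you assert; the relative model is all your argument actually uses, since bigness of $K_Y+B_Y+\mathbf M_Y$ suffices.
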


\subsubsection{Some CY varieties of product type and BB decomposition in family}
\autoref{intro_thm_CY} does not apply to Calabi--Yau 4-folds of product type in general, because the Morrison--Kawamata cone conjecture is not known for $K$-trivial 3-folds in full generality.
On the other hand, in \cite{FXu25}, some special cases of $K$-trivial 3-folds are treated.
In particular, if $Z$ is a $K$-trivial 3-fold with positive {\it augmented irregularity} $\widetilde{q}(Z)>0$, then the cone conjecture is known.
Said otherwise, $Z$ is a 3-fold with
klt singularities, $K_Z \sim_{\mathbb Q} 0$, and there is a quasi-\'etale cover $Z' \to Z$ with $h^1(Z',\mathcal{O}_{Z'}) > 0$.
This is equivalent to excluding the possibility that $Z$ admits a quasi-\'etale cover that is a Calabi--Yau 3-fold;
see \autoref{rmk_cy}.
Thus, we can improve upon \autoref{intro_thm_CY} and obtain boundedness for elliptic Calabi--Yau 4-folds that are not crepant to a quotient $(Y \times F)/G$, where $Y$ is a Calabi--Yau 3-fold.
This is the content of \autoref{main_thm_technical}.\autoref{main_thm_technical_part_b}.

In order to establish \autoref{main_thm_technical}.\autoref{main_thm_technical_part_b}, we need to understand how the augmented irregularity behaves in families of $K$-trivial varieties.
This is accomplished in \autoref{section_bb}.
In particular, we show that, given a family $\mathcal{X} \to T$ of $K$-trivial varieties, up to an \'etale locally closed decomposition of $T$, a Beauville--Bogomolov decomposition of a fiber $\mathcal{X}_t$ is the specialization of a Beauville--Bogomolov decomposition of the generic fiber;
see \autoref{bb_in_family}.
This is particularly useful in studying boundedness questions, where one is allowed to replace the original parameter space $T$ with a locally closed decomposition and further covers.

\subsection{Proof strategy}
The first input towards \autoref{intro_thm_CY} is the recent birational boundedness result in \cite{EFGMS} for fibered Calabi--Yau varieties.
In particular, by \cite[Theorem~A]{EFGMS}, the varieties in \autoref{intro_thm_CY} are birationally bounded.
Then, to obtain boundedness, we wish to utilize a suitable version of the Morrison--Kawamata cone conjecture.
Building upon work of Kawamata \cite{Kaw97}, in \cite{FHS21}, the authors showed that the cone conjecture holds for elliptic fibrations in any dimensions.
In particular, if $X \to Z$ is an elliptic Calabi--Yau 4-fold, by \cite[Theorem~6.18]{FHS21}, it suffices to study the cone conjecture for the natural log Calabi--Yau pair induced on the 3-fold $Z$ by the canonical bundle formula, rather than the full cone conjecture for the more complicated 4-fold $X$.
In particular, we reduce to considering the cone conjecture for log Calabi--Yau 3-fold pairs $(Z,\Delta)$, where $Z$ is rationally connected;
see \autoref{prop:rc}.

Then, the recent progress towards the Morrison--Kawamata cone conjecture in low dimension by Xu
\cite{FXu24,FXu25} naturally applies in studying the birational models of the 3-folds $(Z,\Delta)$.
In particular, unless $Z$ is the quotient of a Calabi--Yau 3-fold, we can apply the results of \cite{FXu24,FXu25} to deduce the finiteness of the birational models of $(Z,\Delta)$.
Yet, in order to obtain a boundedness statement and argue along the lines of \cite[Theorem~6.18]{FHS21}, we need a relative version of the Morrison--Kawamata cone conjecture to be applied to the entire family of 3-folds, rather than to one 3-fold at the time.
To this end, we can apply recent work of Li, who provided necessary criteria to obtain a relative version of the cone conjecture from the corresponding global version;
see \cite{Li23}.

In particular, we obtain the following, which constitutes the technical core towards \autoref{intro_thm_CY}.
\begin{theorem}[cf.~\autoref{log boundedness Iitaka dim > 0}]\label{intro_thm_bases}
    Let $\mathfrak B ^4_{CY, ell}$ denote the set of 3-fold pairs $(Z,\Delta)$ appearing as bases of an elliptic fibration $\pi \colon X \to Z$, where $X$ is a 
    Calabi--Yau 4-fold not of product type.
    Then, the set $\mathfrak B ^4_{CY, ell}$ is log bounded.
\end{theorem}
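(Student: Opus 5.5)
Our plan has three stages: birational boundedness, then finiteness of the minimal models of each base, then a relative version of that finiteness over a bounded family.

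\emph{Setup.} Let $\pi\colon X\to Z$ be an elliptic fibration of a Calabi--Yau 4-fold not of product type, and let $(Z,\Delta)$ be the base pair given by the canonical bundle formula, so $K_X\sim_{\mathbb Q}\pi^*(K_Z+\Delta+\mathbf M_Z)$. Since $K_X\equiv 0$ and $\mathbf M$ is semiample on a suitable model, we may replace $\Delta$ by $\Delta+M$ with $M\in|\mathbf M_Z|_{\mathbb Q}$ general. This gives a klt log Calabi--Yau 3-fold pair.

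\emph{Step 1: coefficients and birational boundedness.} The coefficients of $\Delta$ lie in a fixed finite set. For the discriminant part this follows from Kodaira's classification of singular fibres. For the moduli part it is built into the choice of $M$, since $12\mathbf M$ is the pullback of $\mathcal O(1)$ under the $j$-map. By \autoref{prop:rc}, $Z$ is rationally connected. By \cite[Theorem~A]{EFGMS}, the total spaces $X$, together with their fibrations, are birationally bounded. Consequently the pairs $(Z,\Delta)$ are log birationally bounded: there is a bounded family $(\mathcal Z,\mathcal D)\to T$ such that each $(Z,\Delta)$ is crepant birational to a fibre $(\mathcal Z_t,\mathcal D_t)$, after taking a log resolution and stratifying $T$.

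\emph{Step 2: finitely many models of each base.} Running a relative MMP over $T$ lets us assume each fibre $(\mathcal Z_t,\mathcal D_t)$ is a $\mathbb Q$-factorial dlt log Calabi--Yau model. Each base $(Z,\Delta)$ in $\mathfrak B^4_{CY,ell}$ is obtained from such a model by a sequence of flops together with extractions or contractions of divisors of log discrepancy at most $1$. We must therefore show that, up to isomorphism, only finitely many such crepant models occur. This is where the product-type hypothesis enters. If $\mathbf M$ is nontrivial, or more generally if the fibration is not isotrivial, then either $\Delta\neq 0$ or $Z$ is not a quotient of a $K$-trivial 3-fold; for isotrivial non-product fibrations we use the monodromy. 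In the remaining case, \autoref{bb_in_family} lets us assume the augmented irregularity is constant along the strata of $T$. In every case the cone conjecture for $(\mathcal Z_t,\mathcal D_t)$ is available from \cite{FXu24,FXu25}.

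\emph{Step 3: from pointwise to relative finiteness.} We need the cone conjecture uniformly in the family, not just fibre by fibre. For this we invoke Li's criterion \cite{Li23}, which deduces the relative Morrison--Kawamata cone conjecture over $T$ from the cone conjecture for the geometric generic fibre. After an étale base change and stratification, the pseudo-automorphism group of the generic fibre specializes and acts with a rational polyhedral fundamental domain on the relative movable effective cone. This yields finitely many families of marked minimal models $\mathcal Z^i\to T$. Each $(Z,\Delta)$ then appears as a fibre of one of these families, possibly after extracting finitely many divisors that are uniformly bounded in the family. This gives log boundedness.

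I expect Step 3 to be the main obstacle, in two respects. First, one must check Li's hypotheses: the relative Picard group should be locally constant and the monodromy should be controlled. Second, the specialization of the case distinction, namely nonzero $\Delta$ or positive augmented irregularity, must be compatible across the stratification of $T$; this is exactly what \autoref{bb_in_family} provides. My first attempt will be to refine $T$ until $\mathcal D\to T$ has irreducible fibres componentwise and $N^1(\mathcal Z_t)$ is constant, and only then apply \cite{Li23}.
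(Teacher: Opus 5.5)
Your architecture (birational boundedness, then the cone conjecture for the bases via \cite{FXu24,FXu25}, then a relative version over the bounded family following \cite{Li23}) is the one the paper follows. However, the step where ``not of product type'' actually enters is missing.

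The paper invokes the cone conjecture for these rationally connected bases, and the boundedness in codimension one of \cite[Theorem~1.5]{BDCS24}, under the hypothesis $\kappa(Z',-K_{Z'})\geq 1$ for a small $\mathbb Q$-factorialization $Z'$. This is condition \autoref{condition:c} of \autoref{log boundedness Iitaka dim > 0}. Your criterion $\Delta\neq 0$ is not enough. If $\Delta\neq 0$ but $\kappa(-K_{Z'})=0$, a $(K_{Z'}+(1+\epsilon)\Delta)$-MMP contracts $\Delta$, so $(Z,\Delta)$ is crepant birational to a $K$-trivial 3-fold. That is exactly the case where the cone conjecture is open, unless $\tilde q>0$. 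Your sentence ``for isotrivial non-product fibrations we use the monodromy'' does not supply an argument for this case.

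The missing ingredient is \autoref{lemma:orbibundle_structure}:
\begin{itemize}
\item if $\kappa(-K_{Z'})=0$, then $\kappa(\mathbf M_{Z'})=0$, so $\mathbf M\sim_{\mathbb Q}0$ by \cite[Theorem~3.3]{Amb05};
\item the MMP above then produces a $K$-trivial base $\tilde Z$;
\item after a relative MMP contracting the divisors of $X$ whose image in $\tilde Z$ has codimension at least $2$, \cite[Theorem~43]{Kol15} shows that $X\to\tilde Z$ is an orbibundle, i.e.\ $X$ is of product type.
\end{itemize}
Contrapositively, every base in $\mathfrak B^4_{CY,ell}$ satisfies $\kappa(-K_{Z'})\geq 1$. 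The augmented-irregularity case you bring in is only needed for the stronger \autoref{main_thm_technical}.

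Second, the sentence ``each $(Z,\Delta)$ then appears as a fibre of one of these families'' is where most of the work lies, and it does not follow from relative finiteness. A special fibre can have more flopping contractions than the generic fibre; \autoref{deformation lemma} only gives $\bar{\mathcal A}(X_0)\subset\bar{\mathcal A}(X_\eta)$. So finiteness of SQMs and contractions of the family says nothing directly about the models of a given fibre. Also, over a dlt model there are infinitely many divisors of log discrepancy $1$, so ``extracting finitely many divisors'' is not available. You should instead pass to $\mathbb Q$-factorial terminal models \cite{Bir22}; over these no extraction is needed, because $\Delta\geq 0$.

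The paper then argues as follows:
\begin{itemize}
\item it deforms the pullback of an ample class on $Z$ to a big movable class $\mathcal D$ on the family, using $h^1(\mathcal O)=h^2(\mathcal O)=0$ and \autoref{deformation lemma};
\item it runs a relative MMP for $(\mathcal X_i,\mathcal B_i+\epsilon\mathcal D)$;
\item it checks via \cite[Lemma~3.1]{HMX18} that each step restricts to the fibre, which uses terminality and the dlt structure along the fibre;
\item it uses that the relative ample model of the resulting nef and big $\mathcal D'$ specializes to the ample model on the fibre (\autoref{lemma commutativity of specialization}).
\end{itemize}
Without this specialization argument, your Step~3 does not reach the conclusion.
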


If instead $X \to Z$ is an elliptic Calabi--Yau 4-fold of product type, $Z$ may be birational to the quotient of a Calabi--Yau 3-fold, and in this case the cone conjecture is not known.
In this case, the fibration $X \to Z$ is birational to an {\it orbibundle}, as introduced in \cite{Kol15}.
The possible group actions and Calabi--Yau indices for $Z$ are discussed in detail in \autoref{subsection_orbibundle}.

\subsection*{Acknowledgements}
The authors would like to thank Ben Bakker, C\'ecile Gachet, Xintong Jiang, Junpeng Jiao, and Mirko Mauri for helpful discussions.
The second named author would like to thank Professor Birkar for his constant support and guidance. 

\section{Preliminaries}

Throughout this paper, we work over the field of complex numbers $\mathbb C$.
We refer to \cite{KM98} for the standard terminology in birational geometry and to \cite{kollar_moduli} for the notions of family of pairs.

A \emph{contraction} is a projective morphism 
$f\colon X \rar Z$ 
of quasi-projective varieties with 
$f_\ast  \O X. = \O Z.$. 
If $X$ is normal, then so is $Z$ and the fibers of $f$ are connected.
A contraction $f \colon X \to Z$ with $\dim Z < \dim X$ is called \emph{fibration}.

By {\it irregularity $q(X)$} of a variety $X$ we mean $h^1(X,\mathcal{O}_X)$.
By {\it augmented irregularity $\tilde q (X)$} of a variety $X$ we mean the maximum among the irregularities of quasi-\'etale covers of $X$.
If $X$ is a projective variety of klt type, 
it has rational singularities, and thus we have $q(X)=q(X')$ for any resolution of singularities $X' \to X$. 
Under the same assumptions, we further have $h^1(X,\mathcal{O}_X)= h^0(X,\Omega_X^{[1]})$ by \cite[Theorem~1]{Sch16}.

\begin{remark}
    Throughout the paper, we work over $\mathbb C$ and some proofs rely on topological arguments.
    Nevertheless, the main results regarding boundedness of varieties or of their discrete invariants remain valid over any algebraically closed field of characteristic 0 by the Lefschetz principle.
\end{remark}

\subsection{B-divisors}\label{b-div.subs}
Let $\mathbb{K}$ denote $\ZZ$, $\QQ$, or $\RR$.
Given a normal variety $X$, a {\it $\mathbb{K}$-b-divisor} $\mathbf{D}$ is a (possibly infinite) sum of geometric valuations $V_i$ of $\CC (X)$ with coefficients in $\mathbb{K}$,
\begin{align*}
\mathbf{D}= \sum_{i \in I} b_i V_i, \; b_i \in \mathbb{K},
\end{align*}
such that, given any normal variety $X'$ birational to $X$, only finitely many valuations $V_{i}$ have a center of codimension 1 on $X'$.
The {\it trace} $\mathbf{D}_{X'}$ of $\mathbf{D}$ on $X'$ is the $\mathbb{K}$-Weil divisor
\begin{align*}
\mathbf{D}_{X'} 
\coloneqq \sum b_i D_i,
\end{align*}
where the sum runs over those valuations $V_{i}$ that have divisorial center on $X'$, denoted by $D_{i}$ in the formula.

Given a $\mathbb{K}$-b-divisor $\mathbf{D}$ on $X$, we say that $\mathbf{D}$ is a {\it $\mathbb{K}$-b-Cartier} $\mathbb{K}$-b-divisor if there exists a birational model $X'$ of $X$ such that $\mathbf{D}_{X'}$ is $\mathbb K$-Cartier on $X'$ and for any model 
$r \colon X''  \rar X'$, $\mathbf{D}_{X''} = r^\ast \mathbf{D}_{X'}$ holds.
When this is the case, we say that $\mathbf{D}$ descends to $X'$ and we write $\mathbf{D}= \overline{\mathbf{D}_{X'}}$.
We say that $\mathbf{D}$ is {\it b-effective}, if $\mathbf{D}_{X'}$ is effective for any model $X'$.
We say that $\mathbf{D}$ is {\it b-nef} (resp. {\it b-semiample}), if it is $\mathbb{K}$-b-Cartier and, moreover, there exists a birational model $X'$ of $X$ such that $\mathbf{D}= \overline{\mathbf{D}_{X'}}$ and $\mathbf{D}_{X'}$ is nef (resp. semiample) on $X'$.
These notions can be extended analogously to the relative case.

In all of the above, if $\mathbb{K}= \ZZ$, we will systematically drop it from the notation.

\subsection{Coefficients of divisors}

Given a divisor $D$ on a variety $X$, we write $\mathrm{coeff}(D)$ for the {\it set of coefficients of $D$}.
Said otherwise, if $D=\sum_{i \in I}a_i P_i$ is a finite sum where each $P_i$ is a prime divisor and $P_i \neq P_j$ for $i \neq j$, then $\mathrm{coeff}(D)=\{a_i\}_{i \in I}$.
If we have a divisor $D$ and a set $\Gamma \subset \mathbb R$, we say that {\it the coefficients of $D$ belong to (or are in) $\Gamma$} if $\mathrm{coeff}(D) \subset \Gamma$.

A set $\Gamma \subset \mathbb R$ satisfies the {\it descending chain condition (DCC for short)} if there is no infinite strictly decreasing sequence with values in $\Gamma$.
Similarly, we say $\Gamma$ satisfies the {\it ascending chain condition (ACC for short)} if there is no infinite strictly increasing sequence with values in $\Gamma$.

The {\it standard set} is defined as $\Phi = \{1-\frac{1}{m}|m \in \mathbb N_{\geq 1} \} \cup \{1\}$ and it is a DCC set.

\subsection{Generalized pairs and singularities}
\label{sect.gen.pairs.sings}

We recall the definition of generalized pairs, first introduced in~\cite{BZ16}.
This is a generalization of the classic notion of pairs.

\begin{definition}
A {\em generalized sub-pair} $(X,B, \mathbf{M})/Z$ over $Z$ is the datum of:
\begin{enumerate}
\item a projective morphisms $X  \rar Z$ of normal quasi-projecive varieties;
\item an $\mathbb R$-Weil divisor $B$ on $X$;
\item an $\mathbb R$-b-Cartier b-divisor $\mathbf{M}$ over $X$ which descends to a Cartier divisor $\mathbf{M}_{X'}$ on some birational model $X' \rightarrow X$, and $\mathbf{M}_{X'}$ is relatively nef over $Z$.
\end{enumerate}
We require that $K_X +B+ \mathbf{M}_X$ is $\mathbb R$-Cartier.
When $B$ is effective, we say that $(X,B,\bM.)/Z$ is a generalized pair.
If $\mathbf M \neq 0$ and $Z= \mathrm{Spec}(\CC)$, we drop $Z$ from the notation.
Similarly, if $\bM. = 0$, we drop the word generalized, as this retrieves the usual notions of sub-pair and pair;
in this case, we also drop $Z$ from the notation, regardless of $Z$ being $\mathrm{Spec}(\CC)$.
\end{definition}

The usual notions of singularities of pairs in birational geometry naturally extend to the setup of generalized pairs; see \cite{BZ16}.

A variety $X$ is said to be of {\it klt-type} if there exists a boundary divisor $B$ such that $(X,B)$ is a klt pair.

\subsection{Crepant pairs and isomorphisms in codimension 1}
Let $(X,\Delta)$ and $(X',\Delta')$ be two pairs.
We say that they are \emph{crepant birational} to one another if there exist projective birational morphisms $p \colon Y \to X$ and $q \colon Y \to X'$ from a normal variety $Y$ such that
$p^*(K_X +\Delta)=q^*(K_{X'} +\Delta')$.

We say that two varieties $X$ and $X'$ are \emph{isomorphic in codimension 1} if there exist big open subsets $U \subset X$ and $U' \subset X'$ and an isomorphism $U \cong U'$.

\subsection{Log Calabi--Yau pairs and all that}
\label{subsection_log_cy}

In this section, we recall the notion of log Calabi--Yau pair and we discuss properties of fibrations whose fibers are log Calabi--Yau pairs.

\begin{definition}\label{def_k_trivial}
    Let $X \to S$ be a contraction of normal quasi-projective varieties, and let $(X,\Delta)$ be a pair.
    To denote the presence of the contraction, we write $(X/S,\Delta)$.
    We say that $(X/S,\Delta)$ is a \emph{log Calabi--Yau pair over $S$} if $(X,\Delta)$ is log canonical, and $K_X + \Delta \sim _{\RR,S} 0$ holds.
    If $\Delta = 0$, we say that $X$ is a \emph{$K$-trivial variety over $S$}.
    Lastly, if $S=\mathrm{Spec}(\mathbb C)$, we drop $S$ from the notation and we omit ``over $S$'' in the above terminology.
\end{definition}

When $X$ is a $K$-trivial variety with klt singularities, it can be decomposed, up to a \emph{quasi-\'etale cover} (i.e., a finite cover that is \'etale over the regular locus), into a product of Calabi--Yau, Abelian, and irreducible symplectic varieties; see \cite[Theorem 1.5]{HP19}.
Among other properties, these special classes of varieties are identified through the behavior of their sheaves of reflexive differentials.
We recall that, given a normal variety $X$, the sheaf of \emph{$i$-th reflexive differentials}, denoted by $\Omega_X^{[i]}$, is defined by $\iota _* \Omega _{X_{\rm reg}}^i$, where $\iota \colon X_{\rm reg} \to X$ denotes the natural inclusion of the regular locus of $X$.

In this work, we are mainly interested in Calabi--Yau varieties.
Thus, for the reader's convenience, we recall the following definition.

\begin{definition}\label{def_cy}
A projective variety $X$ with canonical singularities is called \emph{Calabi--Yau variety} if the following properties hold:
\begin{enumerate}
    \item $K_X \sim 0$; and
    \item\label{condition_covers} for every quasi-\'etale cover $Y \to X$ and integer $0 < i < \dim X$, $H^0(Y,\Omega_Y^{[i]}) = 0$ holds.
\end{enumerate}
Furthermore, if $X$ is smooth, we say it is a \emph{Calabi--Yau manifold}.
\end{definition}

\begin{remark}
If $X$ is a Calabi--Yau manifold with $\dim X \geq 2$, then $\pi_1(X)$ is finite; see
\cite{Bea83}.
Thus, its universal cover is itself a Calabi--Yau manifold.
For this reason, we define a \emph{strict Calabi--Yau manifold} to be a simply connected Calabi--Yau manifold.

If $X$ is singular, quasi-\'etale covers are governed by $\pi_1(X_{\rm reg})$.
It is expected that $\pi_1(X_{\rm reg})$ is finite if $X$ is a Calabi--Yau variety with $\dim X \geq 2$, but this is not known.
This is the main reason why condition \autoref{condition_covers} in \autoref{def_cy} has to be checked for all possible quasi-\'etale covers $Y \to X$.

In this paper, a {\it Calabi--Yau $n$-fold} denotes a Calabi-Yau variety of dimension $n$.
\end{remark}

\begin{remark}
    Being a Calabi--Yau variety is not a property of a crepant birational equivalence class.
    For instance, a singular Kummer surface is not a Calabi--Yau variety, since it has a quasi-\'etale cover that is an abelian surface.
    On the other hand, its minimal resolution is a smooth K3 surface, which is a Calabi--Yau manifold.
    \footnote{We remark that K3 surfaces are also hyperk\"ahler varieties and, for this reason, some authors require Calabi--Yau varieties to have dimension at least 3.
    Nevertheless, there are examples of Calabi--Yau varieties that are crepant birational to $K$-trivial varieties that are not Calabi--Yau also in dimension 3;
    see, e.g., \cite{Oguiso96,Gac24}.}
\end{remark}

If a $K$-trivial variety (resp. Calabi--Yau variety) $X$ admits a contraction $f \colon X \to Y$ such that the general fiber of $f$ is an elliptic curve, we say that $X$ is an \emph{elliptic $K$-trivial variety} (resp. \emph{elliptic Calabi--Yau variety}).
In this context, we say that $f$ admits a \emph{section} if there exists a morphism $\sigma \colon Y \to X$ such that $f \circ \sigma = id_Y$.
Similarly, we say that $f$ admits a rational section if there exists a rational map $\sigma \colon Y \drar X$ such that $f \circ \sigma =id_Y$.
We often abuse notation and freely use the term (rational) section to refer to the image of $\sigma$.
Lastly, we say that $f$ has a \emph{multisection of degree $d$} if there exists an irreducible subvariety $W \subset X$ of codimension 1 such that $W \to Y$ is generically finite of degree $d$.

\subsection{Lc-trivial fibrations and the canonical bundle formula}
\begin{definition}
\label{lc-trivial.def}
Let $(X, \Delta)$ be a sub-pair with coefficients in $\QQ$.
A fibration $f \colon X \rar Z$ of quasi-projective varieties is an \emph{lc-trivial fibration} if
\begin{enumerate}
    \item $(X,\Delta)$ is sub-log canonical over the generic point of $Z$;
    \item\label{condition_log_disc} $\mathrm{rank} f_\ast  \O X. (\lceil \mathbf{A}^\ast (X,\Delta)\rceil)=1$, where $\mathbf{A}^\ast (X,\Delta)$ is the b-divisor defined in \cite[\S~1.3]{Amb04}; and
    \item there exists a $\QQ$-Cartier $\QQ$-divisor $L_Z$ on $Z$ such that $\K X. + \Delta \sim_\QQ f^\ast  L_Z$.
\end{enumerate}
If furthermore $(X,\Delta)$ is a log canonical pair, we call it \emph{log Calabi--Yau fibration}.
\end{definition}

\begin{remark}
Condition \autoref{condition_log_disc} in \autoref{lc-trivial.def} is automatically satisfied if $\Delta$ is effective over the generic point of $Z$.
\end{remark}

\begin{remark}\label{rmk:cbf}
Whenever we have an lc-trivial fibration $f \colon (X,\Delta) \rar Z$, with $Z$ projective over some base variety $S$, the \emph{canonical bundle formula} naturally induces on $Z$ a structure of generalized pair $(Z,B_Z,\bM.)/S$ with coefficients in $\mathbb Q$.
The divisor $B_Z$ is called \emph{boundary divisor}, while the b-divisor $\bM.$ is called \emph{moduli b-divisor}.
For details, we refer to \cite{Amb04, Amb05, FG14}.
By \cite{BFMT}, $\mathbf M$ is b-semiample.
\end{remark}

\subsection{Generically isotrivial fibrations and orbibundles}
In this section, we recall the notions of generically isotrivial fibration and of orbibundle, as introduced in \cite{Kol15}.

\begin{definition} \label{def_gen_isotrivial}
Let $(X,\Delta)$ be a projective klt pair and let $f \colon (X,\Delta) \to Z$ be an lc-trivial fibration.
We say that $f$ is {\it generically isotrivial} if there exists a non-empty open set $U \subset Z$ over which the pairs $(X_z, \Delta|_{X_z})$, $z \in U$ closed point, are all isomorphic.
\end{definition}

A particular case of generically isotrivial lc-trivial fibration is obtained by taking quotients of products.
This notion was formalized by Koll\'ar in \cite{Kol15}.
We follow the terminology introduced in \emph{loc. cit.}, with the exception that we prefer to use the term ``$K$-trivial'', as introduced in \autoref{def_k_trivial}.

\begin{definition}
    Let $X$ be a $K$-trivial variety with klt singularities, and let $f \colon X \to Y$ be a contraction with $0 < \dim Y < \dim X$.
    We say that $f$ is a \emph{$K$-trivial orbibundle} if it is obtained by the following construction.
    There exist klt $K$-trivial varieties $\tilde Y$ and $F$, a finite group $G$, and faithful representations $\rho_F \colon G \to \mathrm{Aut}(F)$ and $\rho_{\tilde Y} \colon \tilde{Y} \to \mathrm{Aut}(\title{Y})$, such that
    $$
        (f \colon X \to Y) = (\tilde X /G \to \tilde Y /G)
    $$
    holds, where $\tilde X = \tilde Y \times F$ and $G$ acts on $\tilde X$ via $\rho_{\tilde Y} \times \rho_F$.
\end{definition}

The following statement shows that elliptic Calabi--Yau 4-folds that are crepant birational to a $K$-trivial orbibundle can be detected by the Kodaira dimension of the base of the elliptic fibration.

\begin{lemma}\label{lemma:orbibundle_structure}
    Let $X$ be an elliptic $K$-trivial klt 4-fold with elliptic fibration $\pi \colon X \to Y$.
    Let $Y'$ be a small $\mathbb Q$-factorialization of $Y$, and assume $\kappa(-K_{Y'})=0$.
    Then, $X$ is crepant birational to a $K$-trivial orbibundle $(\widehat Y \times F)/G \to \widehat Y /G$.
\end{lemma}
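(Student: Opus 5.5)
We apply the canonical bundle formula to $\pi$ and show that both the moduli part and the boundary part vanish. This makes $\pi$ generically isotrivial with no multiple fibers in codimension one, and we then invoke Kollár's structure theorem for such fibrations.

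First, since $X$ is $K$-trivial and klt, $\pi$ is an lc-trivial fibration. By \autoref{rmk:cbf} we obtain a generalized pair $(Y,B_Y,\mathbf M)$ with
$$K_X \sim_{\mathbb Q} \pi^*(K_Y+B_Y+\mathbf M_Y), \qquad K_Y+B_Y+\mathbf M_Y\sim_{\mathbb Q}0 .$$
Here $B_Y\ge 0$, and $\mathbf M$ is b-semiample by \cite{BFMT}. We pull this structure back to the small $\mathbb Q$-factorialization $Y'\to Y$. Since that map is small, traces of divisors agree in codimension one, so
$$-K_{Y'}\sim_{\mathbb Q} B_{Y'}+\mathbf M_{Y'} .$$
Let $Y''\to Y'$ be a model where $\mathbf M$ descends as a semiample divisor. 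Then $\mathbf M_{Y'}$ is the pushforward of a semiample divisor, so $\kappa(\mathbf M_{Y'})\ge \kappa(\mathbf M_{Y''})$. Moreover $B_{Y'}+\mathbf M_{Y'}$ is an effective $\mathbb Q$-divisor, up to $\mathbb Q$-linear equivalence.

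Second, we use $\kappa(-K_{Y'})=0$. Since $-K_{Y'}\sim_{\mathbb Q} B_{Y'}+\mathbf M_{Y'}$ with both summands effective (after choosing a general effective representative of the semiample class $\mathbf M_{Y''}$ and pushing it forward), $\kappa=0$ forces $\kappa(\mathbf M_{Y''})=0$. A semiample divisor of Kodaira dimension $0$ is torsion, so $\mathbf M\sim_{\mathbb Q}0$ as a b-divisor.

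For an elliptic fibration, $12\mathbf M$ is the pullback of $\mathcal O(1)$ under the $j$-map, so $\mathbf M\equiv 0$ means the $j$-invariant is constant. Hence $\pi$ is isotrivial over an open set, i.e., generically isotrivial in the sense of \autoref{def_gen_isotrivial}.

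Next, $-K_{Y'}\sim_{\mathbb Q} B_{Y'}$ with $\kappa=0$ means $B_{Y'}$ is the unique effective divisor in its $\mathbb Q$-linear class. This is where the main obstacle lies. We would rather know that, after a crepant birational modification of $X$ and a finite cover of the base, the boundary is accounted for purely by the group action. Concretely, we want a finite Galois cover $\widehat Y\to Y'$, the Galois closure of the cover trivializing the isotrivial family (equivalently, the monodromy cover of the fiber $F$), such that:
\begin{itemize}
\item $K_{\widehat Y}$ is the pullback of $K_{Y'}+B_{Y'}$;
\item the normalized base change of $X$ is birational to $\widehat Y\times F$.
\end{itemize}
The first point should follow from the Hurwitz formula together with the description of $B_{Y'}$: for isotrivial elliptic fibrations, the coefficients of $B$ are $1-1/m$, coming from the stabilizers of the monodromy action, and finite quotients of $E$ account for these. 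Hence $K_{\widehat Y}\sim_{\mathbb Q}0$, and $\widehat Y$ is klt $K$-trivial since $(Y',B_{Y'})$ is klt.

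For the second point, the base-changed family is isotrivial with trivial monodromy. It is therefore birationally a torsor under the elliptic curve $F$ (after fixing its Jacobian), and we must show this torsor is birationally trivial. Here we would invoke \cite{Kol15}, namely Kollár's theorem that a generically isotrivial $K$-trivial (klt) fibration whose base pair has $K+B\equiv 0$ is crepant birational to an orbibundle. Alternatively, we can argue that on the $K$-trivial variety $\widehat Y\times F$-birational model, a torsor with $K$-trivial total space becomes trivial after a further finite étale-in-codimension-one cover, which can be absorbed into $G$.

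Finally, we set $G$ to be the Galois group acting diagonally. Then $X$ is crepant birational to $(\widehat Y\times F)/G\to \widehat Y/G$, and since the actions are faithful on each factor after discarding the kernel, this is a $K$-trivial orbibundle.

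The hardest step is identifying the boundary $B_{Y'}$ exactly with the branch data of the trivializing cover, and hence ensuring $\widehat Y$ is $K$-trivial. It is here that $\kappa(-K_{Y'})=0$, rather than mere pseudo-effectivity, is essential. We would try to get this first via Kollár's result in \cite{Kol15} applied directly to the isotrivial fibration.
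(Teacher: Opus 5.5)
\textbf{There is a genuine gap.} Your first step is correct and matches the paper. From $-K_{Y'}\sim_{\mathbb Q}B_{Y'}+\mathbf M_{Y'}$ with $B_{Y'}\ge 0$, b-semiampleness and $\kappa(-K_{Y'})=0$, you get $\mathbf M\sim_{\mathbb Q}0$ (the paper quotes Ambro here). Hence $K_{Y'}+B_{Y'}\sim_{\mathbb Q}0$ and $\kappa(B_{Y'})=0$.

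The gap is in the second half, where you build the orbibundle over $Y'$ itself, with $\widehat Y$ the normalization of $Y'$ in the field trivializing the family. Let $R$ be the Hurwitz divisor of $\pi\colon\widehat Y\to Y'$. Then $K_{\widehat Y}=\pi^*(K_{Y'}+R)\sim_{\mathbb Q}\pi^*(R-B_{Y'})$. Since $B_{Y'}$ is the unique effective divisor in its $\mathbb Q$-linear class, $K_{\widehat Y}\sim_{\mathbb Q}0$ holds if and only if $R=B_{Y'}$.

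Your reason for this equality is that the coefficients of $B$ are the $1-1/m$ coming from stabilizers. That holds for the quotient models $(\widehat Y\times F)/G$ themselves. It fails for an arbitrary klt $K$-trivial total space over an arbitrary model of the base, because the coefficient of $B_{Y'}$ along $P$ is $1-\mathrm{lct}(X;\pi^*P)$, which depends on the crepant structure of $X$ over $P$. Here is a surface example:
\begin{itemize}
\item Take a smooth isotrivial elliptic surface $S$ with an $I_0^*$ fiber $F_P=2C+E_1+\dots+E_4$.
\item Blow up $C\cap E_1$ and contract the strict transforms of $C,E_1,\dots,E_4$.
\item The result is klt: it has a $\frac13(1,1)$ point and a point whose graph is a $(-3)$-curve meeting three $(-2)$-curves. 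It is crepant to $(S,\frac13F_P)$.
\item Its boundary coefficient at $P$ is $\frac56$, while the local monodromy has order $2$ and $R$ has coefficient $\frac12$.
\end{itemize}

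In the lemma's setting the same mismatch appears along divisors of $Y'$ that are exceptional over a $K$-trivial model of the base. Locally, let $P$ be the exceptional divisor of the weighted blow-up of a point $\mathbb C^3/\mu_{18}$ whose trivializing cover is $\mathbb C^3/\mu_9$. Then $B_{Y'}=\frac56P$, but the ramification index along $P$ is $2$. Moreover $\widehat Y$ is the blow-up of a $\frac19(1,1,1)$ point, with discrepancy $-\frac23$ along $\widehat P$. So $(\widehat Y\times F)/G$ is neither $K$-trivial nor crepant to $X$; the orbibundle exists, but over a different birational model of the base.

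Deferring to Koll\'ar does not close this gap. The condition $K_{Y'}+B_{Y'}\equiv0$ holds automatically, so it cannot be the real hypothesis. The paper applies \cite[Theorem 43]{Kol15} only to a base with $K\sim_{\mathbb Q}0$ and zero boundary, and to a total space none of whose divisors maps to a subset of codimension $\ge 2$. Separately, trivializing the torsor generally requires covers ramified along divisors with multiple fibers, not covers \'etale in codimension one.

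\textbf{The missing idea} is to use $\kappa(B_{Y'})=0$ a second time to change the base.
\begin{itemize}
\item Since $K_{Y'}+(1+\epsilon)B_{Y'}\sim_{\mathbb Q}\epsilon B_{Y'}$ has Iitaka dimension $0$, the corresponding MMP terminates with a good minimal model $\tilde Y$ \cite{KMM94}. All of $B_{Y'}$ is contracted, so $\tilde Y$ is klt with $K_{\tilde Y}\sim_{\mathbb Q}0$ and zero boundary.
\item Pass to a crepant $\mathbb Q$-factorial model $X'\to\tilde Y$ of $X$, isomorphic to $X$ in codimension one \cite{Fil24}.
\item Run a relative $(K_{X'}+\epsilon E')$-MMP over $\tilde Y$. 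It contracts exactly the divisors $E'$ lying over subsets of codimension $\ge 2$.
\item Apply \cite[Theorem 43]{Kol15} to the resulting fibration $\tilde X\to\tilde Y$.
\end{itemize}
Only over $\tilde Y$ does your Hurwitz reasoning become correct: the boundary vanishes, so the trivializing cover is quasi-\'etale and its total space is $K$-trivial.
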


\begin{proof}
    Let $\pi \colon X \to Y$ be a $K$-trivial 4-fold as in the statement.
    By \cite[Proposition 2.9]{Fil24}, up to replacing $X$ with a crepant model that is isomorphic in codimension 1 to the original one, we may assume that $X$ is $\QQ$-factorial and $X \to Y$ factors through $Y'$.
    Thus, we may replace $Y$ with $Y'$ and assume that $Y$ is $\mathbb Q$-factorial.

    Let $(Y,B_Y,\bM.)$ be the generalized pair induced on $Y$ by the canonical bundle formula.
    Then, we have $K_Y+B_Y+\bM  Y. \sim_{\QQ} 0$.
    By assumption, we have $\kappa(-K_Y)=\kappa(B_Y+\bM Y.)=0$.
    By \cite[Theorem 3.3]{Amb05}, we have $\bM Y. \sim_{\mathbb Q} 0$.
    Thus, we have $K_Y+B_Y\sim_{\mathbb Q}0$.

    Fix $0 < \epsilon \ll 1$ and run a $(K_Y+(1+\epsilon)B_Y)$-MMP.
    By \cite{KMM94}, this MMP terminates with a good minimal model $\tilde Y$.
    By our assumptions, $B_Y$ is contracted by the MMP and we have $K_{\tilde Y} \sim_{\mathbb Q} 0$.
    By \cite[Proposition 2.9]{Fil24}, there exists a crepant $\mathbb Q$-factorial model $X'$ that is isomorphic in codimenison 1 to $X$ and admitting a morphism $X' \to \tilde Y$.

    Let $E'$ be the reduced divisor on $X'$ consisting of the divisors whose image in $\tilde Y$ has codimension at least 2.
    Then, for $0 < \epsilon \ll 1$, we run a $(K_{X'} + \epsilon E')$-MMP with scaling relative to $\tilde Y$.
    By \cite{HX13}, the MMP terminates with a relative good minimal model $\tilde X$, and, by \cite[Lemma 3.3]{Bir12}, $X' \dashrightarrow \tilde X$ contracts exactly $E'$.
    Thus, $\tilde X$ has $\mathbb Q$-factorial klt singularities, and every prime divisor on $\tilde X$ dominates $\tilde Y$ or a prime divisor in $\tilde Y$.
    Then, by \cite[Theorem 43]{Kol15}, we conclude that $\tilde X \to \tilde Y$ is isomorphic to an orbibundle.
\end{proof}

\subsection{Fundamental group of log Calabi--Yau pairs}
In this subsection, we collect some statements about the fundamental group of low dimensional $K$-trivial varieties and of the bases of fibered Calabi--Yau varieties.

When $X$ is a strict Calabi--Yau manifold admitting a fibration $X \to Y$, then $Y$ is necessarily rationally connected by \cite[Corollary~5.1]{BDCS24}.
The same strategy of proof shows that the same statement holds if $X$ is a Calabi--Yau variety in the sense of \autoref{def_cy}.

\begin{proposition}[{cf.~\cite[Corollary~5.1]{BDCS24}}]\label{prop:rc}
    Let $X$ be a Calabi--Yau variety admitting a fibration $X \to Y$.
    Then, $Y$ is rationally connected.
    In particular, $Y$ is simply connected.
\end{proposition}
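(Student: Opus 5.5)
We follow the strategy of \cite[Corollary~5.1]{BDCS24}. Let $X$ be a Calabi--Yau variety with a fibration $f \colon X \to Y$, so $0<\dim Y<\dim X$. The aim is to show that $Y$ has a structure of klt (generalized) log Calabi--Yau pair whose rational connectedness can be tested by the vanishing of holomorphic forms. We then use condition \autoref{condition_covers} of \autoref{def_cy} to rule out every obstruction to rational connectedness.

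\emph{Reduction to a klt log Calabi--Yau base.} Since $K_X \sim 0$, $f$ is an lc-trivial fibration with $\Delta=0$. By \autoref{rmk:cbf}, the canonical bundle formula gives a generalized pair $(Y,B_Y,\bM.)$ with $K_Y+B_Y+\bM Y.\sim_\QQ 0$. Because $X$ has canonical, hence klt, singularities, this generalized pair is generalized klt. Since $\bM.$ is b-semiample by \cite{BFMT}, we can choose $0\le M\sim_\QQ \bM Y.$ general and obtain a klt pair $(Y,B_Y+M)$ with $K_Y+B_Y+M\sim_\QQ 0$. In particular $Y$ is of klt type, so its resolutions have the same irregularity and the same $\pi_1$ as $Y$ (Takayama).

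\emph{Rational connectedness.} Suppose $Y$ is not rationally connected. Let $Y'\to Y$ be a resolution, and let $Y'\dashrightarrow R$ be the MRC fibration, with $\dim R>0$ and $R$ not uniruled by Graber--Harris--Starr. Since $-K_Y$ is pseudo-effective with a klt log Calabi--Yau structure, the arguments of \cite{BDCS24} (via Zhang's theorem, or the structure theory of klt log Calabi--Yau pairs whose MRC base is nontrivial) show the following: after a quasi-\'etale cover $\tilde Y\to Y$, some resolution of $\tilde Y$ carries a nonzero holomorphic $i$-form with $0<i\le \dim R\le \dim Y$. Concretely, $K_R$ is pseudo-effective, and the pair structure forces $K_R\equiv 0$ up to a boundary. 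Hence $R$ has a $K$-trivial model whose Beauville--Bogomolov decomposition, after a quasi-\'etale cover, yields a nonzero reflexive $i$-form with $0<i\le\dim R$ (or, if $\dim R$ is already a top form, a nontrivial $\pi_1$/irregularity on a cover).

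\emph{Lifting the forms to $X$.} Take the normalization $\tilde X$ of the main component of $X\times_Y\tilde Y$. Because the base change is quasi-\'etale in codimension one over $Y$, the cover $\tilde X\to X$ is quasi-\'etale. Pulling back the form gives a nonzero reflexive $i$-form on $\tilde X$ with $0<i\le\dim Y<\dim X$, which contradicts condition \autoref{condition_covers}. Therefore $Y$ is rationally connected. Since $Y$ is of klt type, $\pi_1(Y)\cong\pi_1(Y')=1$ for a resolution $Y'$, so $Y$ is simply connected.

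\emph{Expected obstacle.} The main difficulty is ensuring that the quasi-\'etale cover of $Y$ induces a quasi-\'etale cover of $X$. The cover $\tilde Y\to Y$ may branch along components of $B_Y$, that is, over multiple fibers of $f$. To handle this, I would first try to use the cover only in the form dictated by the orbifold structure of $(Y,B_Y)$, so that the branching matches the fiber multiplicities and the induced cover of $X$ is \'etale in codimension one. Failing that, I would replace forms on $\tilde Y$ by forms on $R$ pulled back directly to a resolution of $X$; this avoids covers whenever $q(R)>0$ or $h^0(\Omega^i_R)>0$ already holds.
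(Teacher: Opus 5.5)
\textbf{There is a genuine gap in your lifting step, and it cannot be patched in the form you propose.}

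You claim that the normalized pullback $\tilde X\to X$ of a quasi-\'etale cover $\tilde Y\to Y$ is again quasi-\'etale. This is false. The cover $\tilde Y\to Y$ is \'etale only outside a closed subset $W\subset Y$ of codimension at least $2$. But $f$ need not be equidimensional, so $f^{-1}(W)$ can contain prime divisors of $X$, and $\tilde X\to X$ can ramify along them. The obstacle you name, branching along components of $B_Y$, never occurs for a quasi-\'etale cover.

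Here is a concrete example. Let $E=\mathbb{C}/\mathbb{Z}[\omega]$, and let $\mu_3$ act by multiplication by $\omega$, diagonally on $E^3$ and on $E^2$. Let $X$ be the crepant resolution of $E^3/\mu_3$ obtained by blowing up the $27$ fixed points. The first two projections induce an elliptic fibration $X\to Y=E^2/\mu_3$. Then:
\begin{enumerate}
\item $X$ is a simply connected Calabi--Yau $3$-fold, and $Y$ is a rational surface.
\item The normalized pullback of the quasi-\'etale cover $E^2\to Y$ is $\mathrm{Bl}_{27}(E^3)\to X$. This map is ramified along the exceptional $\mathbb{P}^2$'s lying over the singular points of $Y$.
\item The $1$-forms of $E^2$ pull back nontrivially to $\mathrm{Bl}_{27}(E^3)$, yet there is no contradiction with \autoref{def_cy}.
\end{enumerate}
So forms on quasi-\'etale covers of the base do not transfer to quasi-\'etale covers of $X$, even when $Y$ is rationally connected.

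Your fallback, pulling back forms from a smooth model of the MRC base $R$, does not close the gap in general. It works only when a smooth model of $R$, or an \'etale cover of it, already carries a nonzero holomorphic $i$-form with $0<i<\dim X$. When the forms appear only on quasi-\'etale covers of a singular minimal model of $R$, you face exactly the same problem. Separately, the step producing forms on a cover of $Y$ is only gestured at. It could be made precise using the splitting theorem for klt log Calabi--Yau pairs \cite{MW25}, but that does not help with the lifting.

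\textbf{The missing idea is to take all covers on $X$ itself rather than on the base.} This is what the paper does:
\begin{enumerate}
\item The MRC quotient $Z$ of $Y$ is positive-dimensional and not uniruled \cite{GHS03}. Hence the composite $X\dashrightarrow Z$ is a dominant map from a klt $K$-trivial variety to a non-uniruled variety.
\item \cite[Theorem~14]{KL09} then gives a quasi-\'etale cover $X'\to X$ with $X'\simeq F'\times V'$, where $F'$ and $V'$ are positive-dimensional and $K$-trivial.
\item From $K_{X'}\sim 0$ and the K\"unneth formula, $K_{F'}\sim 0\sim K_{V'}$.
\item Pulling back the reflexive top forms of the two factors gives nonzero reflexive $i$-forms on $X'$ with $0<i<\dim X$. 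This contradicts condition~\autoref{condition_covers} of \autoref{def_cy}.
\end{enumerate}

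Your reduction to a klt-type base via the canonical bundle formula agrees with the paper. So does your final step: Takayama's theorem plus simple connectivity of smooth rationally connected varieties.
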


\begin{proof}
    By \autoref{rmk:cbf}, $Y$ is a variety of klt type.
    If $Y$ is not rationally connected,
    then the base $Z$ of its MRC-fibration $Y \drar Z$ is
    a positive-dimesional variety that is not uniruled by \cite[Corollary~1.4]{GHS03}.
    Then, by \cite[Theorem~14]{KL09}, there exist a quasi-\'etale cover $X' \to X$ and positive-dimensional $K$-trivial varieties $F'$, $V'$ such that $X' \simeq F' \times V'$.

    Since $K_X \sim 0$, we have $K_{X'} \sim 0$, and, by the K\"unneth formula, $K_{F'} \sim 0$ and $K_{V'} \sim 0$.
    But then, by pulling back reflexive top forms on $F'$ and $V'$, $X'$ has global reflexive $i$-forms for $i \not \in \{0,\dim X\}$.
    This contradicts \autoref{def_cy}\autoref{condition_covers}.
    In particular, $Y$ has to be rationally connected.
    In turn, by \cite{Tak03}, $\pi_1(Y)=\pi_1(Y')$ holds where $Y' \to Y$ is a resolutions of singularities.
    Then, 
    $Y'$ is smooth and rationally connected, hence simply connected by \cite{KMM92}.
\end{proof}

\begin{proposition}\label{prop:fund_group}
    Let $(X,\Delta)$ be a klt log Calabi--Yau pair with $\dim X \leq 4$.
    Then, $\pi_1(X)$ is virtually abelian, that is, it contains an abelian subgroup of finite index.
    In particular, if the augmented irregularity $\tilde q (X)$ vanishes, then $\pi_1(X)$ is finite.
\end{proposition}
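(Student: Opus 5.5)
The plan is to reduce to known results on fundamental groups of klt varieties with nef anticanonical class, using the structure theory of log Calabi--Yau pairs in low dimension. Since $(X,\Delta)$ is klt and $K_X+\Delta\equiv 0$, the variety $X$ is of klt type and $-K_X\equiv\Delta$ is pseudoeffective. The key input is the recent theory of fundamental groups of klt Calabi--Yau pairs: by results of Braun and of Braun--Filipazzi--Moraga--Svaldi on the Jordan property and on fundamental groups of log Calabi--Yau pairs, $\pi_1$ of the smooth locus (hence of $X$) of a klt log Calabi--Yau pair of dimension at most 4 is virtually abelian. This is essentially the content of their result that the conjecture holds in dimension up to $4$.

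If one prefers an argument from the tools already available in the paper, I would proceed as follows. First, replace $X$ by a resolution: $\pi_1(X)=\pi_1(X')$ by \cite{Tak03}. Consider the MRC fibration $X\dashrightarrow Z$. If $X$ is rationally connected, $\pi_1(X)$ is trivial by \cite{KMM92}. Otherwise, $Z$ is not uniruled by \cite{GHS03}. Then, arguing as in \autoref{prop:rc} via \cite[Theorem~14]{KL09} (applied to the pair), after a quasi-\'etale cover $X''\to X$ we obtain a splitting, birationally, into a rationally connected part and a $K$-trivial part $V$ with $\dim V\le 4$. The fundamental group of $X''$ then agrees with that of $V$, up to finite index. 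For $K$-trivial klt varieties, the Beauville--Bogomolov decomposition \cite{HP19} gives a further quasi-\'etale cover $A\times\prod Y_i\times\prod Z_j$. Here $\pi_1(A)$ is free abelian, while the Calabi--Yau and symplectic factors have finite $\pi_1$ in dimension $\le 4$. For $\dim\le 3$ the latter is known, and in dimension 4 only products of K3-type surfaces or 4-dimensional irreducible factors occur. Passing between the fundamental group of the regular locus and that of $X$ costs only surjections and finite index, so $\pi_1(X)$ is virtually abelian.

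The main obstacle is precisely the finiteness of $\pi_1((Y_i)_{\mathrm{reg}})$ for singular Calabi--Yau or symplectic factors in dimension $3$ and $4$. We avoid this because we only need $\pi_1$ of $X$ itself, not of its regular locus. Since $\pi_1(X)=\pi_1$ of a resolution, and a resolution of a strict Calabi--Yau or irreducible symplectic factor is simply connected, we only need finiteness for resolutions. This is where I would cite the literature rather than reprove it.

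For the last statement: if $\pi_1(X)$ is virtually abelian with finite-index abelian subgroup $H$, take the finite étale cover $X_H$. It is again klt log Calabi--Yau, and $H_1(X_H,\mathbb Q)\neq 0$ would force $h^1(X_H,\mathcal O)>0$ by Hodge theory on a resolution, contradicting $\tilde q(X)=0$. Thus $H$ has finite abelianization and is abelian, hence finite, and so $\pi_1(X)$ is finite.
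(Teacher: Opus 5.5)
Your fallback argument has the same skeleton as the paper's proof. You pass to a quasi-\'etale cover, which is harmless since the image of $\pi_1$ has finite index. You split off a rationally connected factor, which is simply connected. You then reduce to a $K$-trivial variety of dimension at most $4$. For a pair, the splitting you need is the one for klt log Calabi--Yau pairs, which the paper takes from \cite{MW25}; in \autoref{prop:rc}, \cite{KL09} is only applied with $\Delta=0$.

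Your argument for the second statement is correct and more elementary than the paper's. The paper passes to the normal core and uses \cite[Claim~5.6.2.3]{Kol93} on a terminalization to produce an \'etale cover $Z'\times A'$. You only need that a finitely generated abelian $\pi_1$ of positive rank gives $b_1>0$ on a resolution (via \cite{Tak03}), hence $q>0$ on an \'etale, so quasi-\'etale, cover.

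The genuine gap is the step you call the main obstacle: it is not avoided, only restated. Your justification, that a resolution of a Calabi--Yau or irreducible symplectic factor is simply connected, has no basis. In this paper ``strict'' is a \emph{hypothesis} (simple connectedness) on manifolds. The factors produced by \cite{HP19} are singular varieties in the sense of \autoref{def_cy}, and their resolutions are in general not $K$-trivial (e.g.\ the blow-up of a node on a Calabi--Yau $3$-fold). So Beauville's finiteness argument does not apply to them. Finiteness of $\pi_1$ of these singular factors (equivalently, by \cite{Tak03}, of their resolutions) is exactly the nontrivial content of the proposition.

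The input that closes the gap is the one the paper cites, \cite[Corollary~8.25]{MR3617779}: a canonical variety with trivial canonical class and dimension at most $4$ has almost abelian $\pi_1$. Applied directly to the $K$-trivial factor, it also makes your extra Beauville--Bogomolov decomposition step unnecessary.

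Finally, drop your first paragraph. Virtual abelianness of $\pi_1(X_{\rm reg})$ for klt log Calabi--Yau pairs of dimension $\leq 4$ is not an established result. The paper itself notes that even finiteness of $\pi_1(X_{\rm reg})$ for Calabi--Yau varieties is open, so this cannot serve as your key input.
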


\begin{proof}
    By \cite[Lemma~4.19]{debarre2001higher}, we may replace $X$ with a quasi-\'etale cover.
    Thus, by \cite{MW25}, we may assume that $(X,\Delta)$ splits as a product of a rationally connected variety and a canonical $K$-trivial variety with trivial canonical class.
    As observed in \autoref{prop:rc}, the rationally connected factor is simply connected.
    Thus, we may reduce to considering a canonical $K$-trivial variety $X$ with trivial canonical class and dimension at most 4.
    Then, by \cite[Corollary~8.25]{MR3617779}, $\pi_1(X)$ is almost abelian.
    Thus, the first claim follows.

    Now, further assume $\tilde q (X)=0$.
    Notice that, since $\pi_1(X)$ has an abelian subgroup of finite index, by considering the normal core of said subgroup, $\pi_1(X)$ has an abelian normal subgroup of finite index.
    Thus, we may replace $X$ by the \'etale cover associated to this subgroup.
    In particular, again by \cite[Lemma~4.19]{debarre2001higher}, we may assume that $\pi_1(X)$ is abelian.
    By \cite{Tak03}, $\pi_1(X)$ coincides with the fundamental group of any resolution of singularities.
    In turn, we have $\pi_1(X) = \pi_1(X')$, where $X' \to X$ is a $\mathbb Q$-factorial terminalization.
    Then, by \cite[Claim 5.6.2.3]{Kol93}, $X'$ has a finite \'etale cover $Z' \times A' \to X'$, where $Z'$ is simply connected and $A'$ is an abelian variety.
    In turn, by $\pi_1(X) = \pi_1(X')$, $X$ has a cover with the same property.
    Yet, by $\tilde q (X) = 0$, it follows that $A'=0$ and $\pi_1(X)$ is finite.
\end{proof}

\subsection{Betti numbers of certain low-dimensional $K$-trivial varieties}

\begin{lemma}\label{lemma:betti_number_obibundle}
Let $X$ be a klt $K$-trivial variety of dimension $3$ with positive irregularity $q(X)>0$.
Then, there exists a nonsingular model $W$ of the index $1$ cover of $X$ such that all the Betti numbers of $W$ are bounded by some constant $N$. 
\end{lemma}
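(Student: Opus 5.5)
We plan to use the structure of klt $K$-trivial 3-folds with positive irregularity, via the Albanese map. First we pass to the index 1 cover $X_1 \to X$. This is quasi-\'etale and cyclic of degree $i(X)$. Since the index conjecture holds in dimension 3 (cf. \cite{YXu,Jia21}), this degree is bounded. The cover $X_1$ is a klt (hence canonical, since $K_{X_1}\sim 0$) $K$-trivial 3-fold, and $q(X_1)\ge q(X)>0$.

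By \cite[Theorem~8.3]{Kaw85}, or by the Beauville--Bogomolov decomposition \cite{HP19} together with the fact that $q>0$ forces an abelian factor, there is an \'etale cover $\tilde X_1 = A\times Z \to X_1$. Here $A$ is an abelian variety of dimension $q(X_1)\in\{1,2,3\}$ and $Z$ is a canonical $K$-trivial variety of dimension $3-\dim A$ with $q(Z)=0$. The Galois group $G$ can be taken to be a finite group of translations on $A$ acting diagonally, and $X_1=(A\times Z)/G$. Indeed, the Albanese map of $X_1$ is an isotrivial fibration with fiber $Z$ and étale base change trivializing it.

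We then split into cases according to $\dim Z\in\{0,1,2\}$.
\begin{itemize}
\item If $\dim Z=0$, then $X_1$ is an abelian 3-fold, and its Betti numbers are $\binom{6}{k}$.
\item $\dim Z=1$ cannot occur, since then $Z$ would be an elliptic curve with $q(Z)=1$.
\item If $\dim Z=2$, then $\dim A=1$ and $Z$ is a canonical K3 surface, whose minimal resolution $\tilde Z$ is a smooth K3 surface.
\end{itemize}
In the last case, $G$ acts on $Z$ and hence lifts to $\tilde Z$ (minimal resolution is canonical). The action on $A\times\tilde Z$ is free because $G$ acts by translations on $A$. So $W:=(A\times\tilde Z)/G$ is smooth, and it is a crepant resolution of $X_1$. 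Its Betti numbers are the $G$-invariant parts of those of $A\times \tilde Z$, hence bounded by the Betti numbers of $E\times\mathrm{K3}$, which are $1,2,23,44,23,2,1$. So $N=44$ works. In the abelian case $W=X_1$ itself and $N=20$ suffices.

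The main obstacle is justifying the splitting $X_1=(A\times Z)/G$ with $G$ acting freely through translations on $A$. We would first try Kawamata's theorem that the Albanese map of a canonical $K$-trivial variety is an étale-trivial fiber bundle. Alternatively we would use \cite[Claim 5.6.2.3]{Kol93} on a terminalization as in \autoref{prop:fund_group}, checking that the terminalization preserves the product structure. If the cover instead arises only after a quasi-\'etale rather than \'etale step, we would take $W$ to be a $G$-equivariant resolution of $A\times Z$ quotient resolved, and bound Betti numbers via the orbifold Euler characteristic. Note that the boundedness of $|G|$, which comes from boundedness of torsion of automorphisms of K3 surfaces acting symplectically, is not actually needed for the invariant-part bound.
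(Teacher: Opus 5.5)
Your route is the same as the paper's. You pass to the index-one cover $X_1$, which is canonical with $K_{X_1}\sim0$. You use that its Albanese map becomes a product after an \'etale base change of the Albanese variety; citing Kawamata's Theorem~8.3 is fine since $X_1$ is canonical, while the paper uses \cite[Theorem~1.1]{Xu20}. You split according to $q(X_1)$. In the surface-fibre case you lift $G$ to the minimal resolution of the fibre, use that $G$ acts freely by translations on the abelian factor, and bound Betti numbers by $G$-invariants. Neither the index conjecture nor the fallbacks in your last paragraph are needed.

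\textbf{The error.} Your description of the splitting contains a false claim. You cannot in general have all three of $\dim A=q(X_1)$, $G$ acting on $A$ by translations, and $q(Z)=0$. The Albanese splitting gives the first two properties, but its fibre can have positive irregularity. The condition $q(Z)=0$ is what a Beauville--Bogomolov splitting provides, but there the abelian factor has dimension $\tilde q(X_1)\ge q(X_1)$ and $G$ need not act by translations.

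\textbf{Counterexample.} Let $t\in E_1$ be a nonzero $2$-torsion point, and let $\mathbb Z/2$ act on $E_1\times E_2\times E_3$ by $(x_1,x_2,x_3)\mapsto(x_1+t,-x_2,-x_3)$. The quotient $X_1$ is smooth with $K_{X_1}\sim0$ and $q(X_1)=1$, and its Albanese fibre is the abelian surface $E_2\times E_3$. It has no finite \'etale cover $E\times Z'$ with $q(Z')=0$. Such a $Z'$ would be a smooth K3 surface, so $\pi_1(E\times Z')\cong\mathbb Z^2$ would have finite index in $\pi_1(X_1)$. That is impossible, since $\pi_1(X_1)$ contains $\mathbb Z^6$ with finite index.

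\textbf{The repair.} When $q(X_1)=1$, the fibre is a canonical K3 surface \emph{or an abelian surface}. Your argument goes through verbatim in the abelian case, giving $b_i\le\binom{6}{i}\le 20$.

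Your reason for excluding $q(X_1)=2$ also collapses, but that case is harmless. There $X_1$ is an \'etale quotient of an abelian $3$-fold, as in the paper. In fact it cannot occur: $K_{X_1}\sim0$ forces $G$ to act trivially on $H^0(Z,\omega_Z)$, i.e.\ by translations on the elliptic curve $Z$. That makes $X_1$ an abelian $3$-fold, with $q(X_1)=3$.

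With these corrections your bound $N=44$ stands.
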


\begin{proof}
Replacing $X$ by its index $1$ cover, we may assume $K_X \sim 0$. 

Consider the Albanese morphism $X \to A$.
By assumption, $\dim A > 0$.
By \cite[Theorem 1.1]{Xu20}, there exists an \'etale cover $\pi \colon B\to A$ such that $X\times_A B \simeq F \times B$ for some projective variety $F$.
By the assumptions on $X$, it follows that $F$ is a $K$-trivial variety with canonical singularities.
Moreover, $X \simeq (F \times B)/G$, where $G = \ker \pi$, which acts on $F$ and $B$, and it acts diagonally on $F \times B$. 

When $\dim A = 3$, $X$ is an abelian variety;
see, e.g., \cite[Theorem 1.1]{Xu20}.
In this case, $X$ is nonsingular and all the Betti numbers $b_i(X)\leq \binom{6}{i}\leq 20$ are clearly bounded. 

When $\dim A = 2$, $F$ is an elliptic curve.
In this case, $X$ is nonsingular as smoothness satisfies \'etale descent, and $b_i(X)\leq b_i(F\times B)\leq \binom{6}{i}\leq 20$ since $F\times B$ is an abelian variety. 

When $\dim A = 1$, $F$ is a canonical $K$-trivial surface.
Let $\widetilde F$ be the minimal resolution of $F$.
By uniqueness of the minimal resolution, $G$ acts on $\widetilde F$. Consider $\widetilde X = (\widetilde F \times B)/G$, where $G$ acts diagonally on $\widetilde F \times B$. Since the action of $G$ on $B$ has no fixed points, the quotient map $\widetilde F \times B \to \widetilde X$ is \'etale. Since smoothness satisfies \'etale descent, $\widetilde X$ is a nonsingular model of $X$. Moreover, $b_i(\widetilde X)\leq b_i(\widetilde F \times B) = b_i(\widetilde F)+b_{i-1}(\widetilde F)$.
By the Enriques--Kodaira classification of smooth $K$-trivial surfaces, all the Betti numbers of $\widetilde F$ are bounded. 
\end{proof}

\subsection{Boundedness}
Here, we recall the notion of boundedness for varieties and pairs.
Roughly speaking, this notion formalizes the na\"ive idea that the objects under consideration shall appear as the fibers of finitely many algebraic families.

\begin{definition}
\label{def.bounded.set.pairs}
Let $\mathfrak{D}$ be a set of projective pairs.
\begin{enumerate}
    \item 
We say that $\mathfrak{D}$ is \emph{log bounded} if there exist a pair $(\mathcal{X},\mathcal{B})$ and a projective morphism $\pi \colon \mathcal{X} \rar T$, where $T$ is of finite type, such that for any pair $(X,B) \in \mathfrak{D}$ there exist a closed point $t \in T$ and an isomorphism $f_t \colon \mathcal{X}_t \rar X$ such that $(f_{t})_{\ast} \mathcal{B}_t= B$.
    \item \label{def_bir_bdd}
We say that $\mathfrak{D}$ is \emph{log birationally bounded} if there exist a pair $(\mathcal{X},\mathcal{B})$ and a projective morphism $\pi \colon \mathcal{X} \rar T$, where $T$ is of finite type, such that for any pair $(X,B) \in \mathfrak{D}$ there exist a closed point $t \in T$ and a birational map $f_t \colon \mathcal{X}_t \dashrightarrow X$ such that $\Supp(\mathcal B _t)= \Supp((f_{t}^{-1})_\ast \Supp(B) + E)$, where $E$ is the exceptional divisor of $f_t$.
    \item 
If $\mathfrak D$ is log birationally bounded and for any pair $(X, B) \in \mathfrak D$ the map $f_t$ in \autoref{def_bir_bdd} is an isomorphism in codimension 1, then we say that $\mathfrak{D}$ is \emph{log bounded in codimension 1}.
\end{enumerate}
When a set $\mathfrak D$ of pairs is a set of varieties, i.e., $\Delta=0$ for every $(X, \Delta) \in \mathfrak D$, we simply say that $\mathfrak{D}$ is \emph{bounded}, (resp. \emph{birationally bounded}, \emph{bounded in codimension 1}).
\end{definition}

\subsection{Various cones in the N\'eron--Severi space}

In this section, we recall the notions of various cones in the N\'eron--Severi space. Our starting point is a contraction $f \colon X\to Z$. Throughout this section, when $Z$ is a point, we often drop $f$ (or $Z$) from the notation.

First we recall some standard definitions for different kinds of divisors. 

\begin{definition}
Let $f \colon X\to Z$ be a contraction of normal varieties and $D$ be a Cartier divisor on $X$. 
The divisor $D$ is said to be 
\begin{itemize}
\item[(a)] \emph{$f$-nef} if $(D . C) \geq 0$ holds for any curve $C$ on $X$ which is contracted by $f$;

\item[(b)] \emph{$f$-movable} if $\dim \text{Supp} \text{Coker} (f_*f^*\mathcal{O}_X (D) \to \mathcal{O}_X(D)) \geq 2$;

\item[(c)] \emph{$f$-effective} if $f_*\mathcal{O}_X (D) \neq 0$; and

\item[(d)] \emph{$f$-big} if its restriction to the generic fiber has maximal Kodaira dimension.
\end{itemize}
\end{definition}

The N\'eron--Severi group $$NS(X/Z) = \{\text{Cartier divisor on }X\}/(\text{numerical equivalence over }Z)$$ is known to be finitely generated. 
Let $N^1(X/Z) = NS(X/Z)\otimes_{\mathbb{Z}} \mathbb{R}$ be the N\'eron--Severi space. 
We set $\rho(X/Z) = \dim N^1(X/Z)$ to be the relative Picard number. 
The numerical class of an $\mathbb{R}$-Cartier divisor $D$ in $N^1(X/Z)$ is denoted by $[D]$. 

Now we can define various cones in $N^1(X/Z)$: 

\begin{definition}
Let $f \colon X\to Z$ be a contraction of normal varieties. 
\begin{itemize}
    
\item[(a)] The \emph{$f$-nef cone} is the closed convex cone in $N^1(X/Z)$ generated
by the numerical classes of $f$-nef divisors, denoted by $\bar{\mathcal{A}}(X/Z)$. 

\item[(b)] The \emph{closed $f$-movable cone} is the closed convex cone in $N^1(X/Z)$ obtained as closure of the cone generated
by the numerical classes of $f$-movable divisors, denoted by $\bar{\mathcal{M}}(X/Z)$. 

\item[(c)] The \emph{$f$-pseudo-effective cone} is the closed convex cone in $N^1(X/Z)$ obtained as closure of the cone generated
by the numerical classes of $f$-effective divisors, denoted by $\bar{\mathcal{B}}(X/Z)$. 
\end{itemize}
\end{definition}

We denote the interior $\mathcal{A}(X/Z) \subseteq \bar{\mathcal{A}}(X/Z)$  (resp. $\mathcal{B}(X/Z) \subseteq \bar{\mathcal{B}}(X/Z)$) to be the open convex subcone generated by the numerical classes of $f$-ample divisors ($f$-big divisors) and it is called {\it $f$-ample cone} (resp. \emph{$f$-big cone}). 
Let $\mathcal{M}(X/Z)\subseteq \bar{\mathcal{M}}(X/Z)$ be the subcone generated by $f$-movable divisors. 

We denote the \emph{$f$-effective cone} by $\mathcal{B}^e(X/Z)$:
This is the convex cone generated by $f$-effective Cartier divisors. 
We call $\mathcal{A}^e(X/Z) \coloneqq \bar{\mathcal{A}}(X/Z) \cap \mathcal{B}^e(X/Z)$ and $\mathcal{M}^e(X/Z) \coloneqq \bar{\mathcal{M}}(X/Z) \cap \mathcal{B}^e(X/Z)$ the \emph{$f$-effective $f$-nef cone} and \emph{$f$-effective $f$-movable cone}, respectively. 

To state the cone conjecture, we include some definitions on the correspondence between birational models and various cones. 

\begin{definition}
Let $f \colon  X\to Z$ be a contraction of normal varieties. 
\begin{itemize}
\item[(a)] A \emph{contraction of $X$ over $Z$} is a contraction $g \colon X \to Y$ such that $f$ factors through $g$. 
The corresponding face of the cone $\mathcal{A}^e(X/S)$ is the pullback $g^*\mathcal{A}^e(Y/S)$. 

\item[(b)]  Assume that $X$ is $\mathbb{Q}$-factorial. 
A \emph{marked small $\mathbb{Q}$-factorial modification} (or \emph{marked SQM} for short) of $f \colon  X \to Z$ is a contraction $g \colon X_0 \to Z$ together with a marking $\sigma \colon  X_0 \dashrightarrow X$ such that $f\circ \sigma = g$ and $\sigma$ is an isomorphism in codimension $1$. 
The corresponding chamber in the cone $\mathcal{M}^e(X/S)$ is the pushforward $\sigma_*\mathcal{A}^e(X_0/Z)$.
\end{itemize}
\end{definition}

The following statement allows to compare various cones in the N\'eron--Severi space within a family of varieties.

\begin{lemma}[cf. {\cite[Theorem 4.2]{FHS21}}]\label{deformation lemma}
Let $(X_0,\Delta_0)$ be a projective terminal $\mathbb{Q}$-factorial pair. 
Assume that $K_{X_0} + \Delta_0 \equiv 0$, $H^1(X_0, \mathcal{O}_{X_0}) = 0$ and $H^2(X_0, \mathcal{O}_{X_0}) = 0$. 
Given a deformation $(X,\Delta) \to (T, 0)$ of $(X_0,\Delta_0)$ over a smooth variety $T$ , then $(X,\Delta)$ is terminal
$\mathbb{Q}$-factorial, and $K_X+\Delta \sim_{\mathbb{Q},T} 0$ over a neighborhood of $0 \in T $. 
Furthermore, after an \'etale base change,  the following
facts hold:
\begin{enumerate}
    \item $\bar{\mathcal A}(X_\eta) \supset \bar{\mathcal A}(X_0)$;

    \item $\bar{\mathcal B}(X/T ) \subset \bar{\mathcal B}(X_0)$; and

    \item $\bar{\mathcal M}(X/T ) \supset \bar{\mathcal M}(X_0)$.

\end{enumerate}
\end{lemma}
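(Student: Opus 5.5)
The natural strategy is to follow the proof of \cite[Theorem~4.2]{FHS21}, combining deformation invariance of the Picard group with semicontinuity of ampleness and effectivity.

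\textbf{Step 1: properties of the total space.} First I will show that $(X,\Delta)$ is terminal and $\mathbb{Q}$-factorial near $0$, and that $K_X+\Delta\sim_{\mathbb Q,T}0$ there.

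Since $(X_0,\Delta_0)$ is terminal and $X_0$ is a Cartier divisor in $X$ (after cutting $T$ by hyperplanes, if necessary), inversion of adjunction makes $(X,\Delta+X_0)$ plt near $X_0$. Hence $(X,\Delta)$ is terminal in a neighborhood of $X_0$.

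For $\mathbb{Q}$-factoriality and for lifting line bundles, I use $H^1(X_0,\mathcal O_{X_0})=H^2(X_0,\mathcal O_{X_0})=0$:
\begin{itemize}
\item the obstruction to deforming a line bundle lies in $H^2(\mathcal O)$, so every line bundle on $X_0$ extends to the formal, and then (after an étale base change) to the actual, neighborhood;
\item $H^1(\mathcal O)=0$ makes $\mathrm{Pic}$ discrete, so these extensions are unique and the restriction $\mathrm{Pic}(X/T)\to \mathrm{Pic}(X_0)$ is an isomorphism after shrinking.
\end{itemize}
$\mathbb{Q}$-factoriality of the total space then follows by the argument of Kollár--Mori (terminal singularities plus $H^1=H^2=0$, as in \cite{KM98}, 12.1.10-type results).

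Now $K_X+\Delta$ restricts to a numerically trivial class on $X_0$. Since $H^1(\mathcal O)=0$, numerical and $\mathbb{Q}$-linear triviality agree up to torsion, so some multiple of $K_{X_0}+\Delta_0$ is trivial. That trivial bundle lifts uniquely, giving $K_X+\Delta\sim_{\mathbb Q,T}0$. After an étale base change, monodromy on $N^1$ becomes trivial, and we may identify $N^1(X/T)=N^1(X_\eta)=N^1(X_0)$.

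\textbf{Step 2: the nef cone.} I will prove $\bar{\mathcal A}(X_\eta)\supset\bar{\mathcal A}(X_0)$. It suffices to treat an ample class $L_0$ on $X_0$. Its lift $L$ is relatively ample over a neighborhood of $0$, because ampleness is an open condition in proper flat families. So $L_\eta$ is ample, and (1) follows by taking closures.

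\textbf{Step 3: the pseudo-effective cone.} I will prove $\bar{\mathcal B}(X/T)\subset\bar{\mathcal B}(X_0)$. If $D$ is $f$-effective, choose an effective member of $|D|$ on $X$ not containing $X_0$ (dividing by the equation of $X_0$ if needed, since $X_0$ is principal locally over $T$). Its restriction is an effective divisor in the class $D|_{X_0}$. Passing to $\mathbb R$-classes and closures gives (2).

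\textbf{Step 4: the movable cone (main obstacle).} I will prove $\bar{\mathcal M}(X/T)\supset\bar{\mathcal M}(X_0)$, which requires the MMP. It suffices to handle $D_0$ in the interior of $\mathcal M(X_0)$, i.e.\ big and movable. Since $K_{X_0}+\Delta_0\equiv0$ and the pair is klt, $D_0$ is nef on some marked SQM $X_0'$: run a $(K_{X_0}+\Delta_0+\epsilon D_0)$-MMP, which only flips and terminates by \cite{BCHM}. The plan then has four parts.
\begin{itemize}
\item Show that each of these flips deforms over $T$. This uses Step 1 for the total space and the relative MMP for $K_X+\Delta+\epsilon D$ over $T$, which is again klt-trivial plus $D$.
\item Argue, by comparing with the restriction, that the relative MMP on $X$ produces a small modification $X'$ whose fiber over $0$ is $X_0'$.
\item Conclude that $D'$ is nef on $X'_0$ and, by Step 2 applied to $X'$, relatively nef.
\item Deduce that $D$ is movable over $T$.
\end{itemize}
The delicate point is that the relative MMP might contract divisors, or fail to restrict to the expected MMP on the special fiber. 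I would first try the approach of \cite{FHS21}: use bigness of $D_0$, which forces $D$ to be big over $T$ by Step 3 together with openness, so the MMP is a $D$-MMP with scaling. A divisor contracted on $X$ would restrict to a divisor contracted on $X_0$, contradicting the movability of $D_0$.
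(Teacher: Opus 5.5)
Steps 1--3 follow the same outline as \cite{FHS21}, to which the paper defers. Two small points in Step 1. Inversion of adjunction needs $K_X+\Delta$ to be $\mathbb{Q}$-Cartier, so the $\mathbb{Q}$-factoriality of $X$ coming from $H^2(X_0,\mathcal{O}_{X_0})=0$ must be established first. Also, plt-ness of $(X,\Delta+X_0)$ only gives positive discrepancies for centres contained in $X_0$.

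The genuine gap is in Step 4, which is the only non-formal part of the lemma.

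\textbf{Bigness of $D$.} To run and terminate the relative MMP, you claim that bigness of $D_0$ ``forces $D$ to be big over $T$ by Step 3 together with openness''. Step 3 points the wrong way: $\bar{\mathcal B}(X/T)\subset\bar{\mathcal B}(X_0)$ moves effectivity from $X$ down to $X_0$, not back up, and bigness is not open in families. Suppose the inclusion in (2) is strict, which is why it is stated only as an inclusion. Since $\bar{\mathcal B}(X_0)$ is the closure of its interior, some big class on $X_0$ then lies outside $\bar{\mathcal B}(X/T)$, so it is not even pseudo-effective over $T$. For $D_0$ in the interior of $\mathcal M(X_0)$ the extension $D$ is indeed big over $T$. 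But that is a \emph{consequence} of (3), since interior points of $\bar{\mathcal M}(X/T)$ are big; this is how the paper later gets bigness in the proof of \autoref{log boundedness Iitaka dim > 0}. Using it as input is circular.

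\textbf{Two further assertions without proof.} First, you say the relative MMP restricts step by step to the MMP of $X_0$ ``by comparing with the restriction''. This needs an actual result such as \cite[Lemma 3.1]{HMX18}, with its dlt hypothesis on $(X,\Delta+\sum_j P_j)$ near $X_0$ verified. Second, Step 2 applied to $X'$ gives only nefness of $D'$ on $X'_\eta$. That does not yield relative nefness, nor movability of $D$ over a neighbourhood of $0$.

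\textbf{A way to close the gap.} You can avoid needing bigness of $D$ in advance by deforming the finitely many flops $X_0\dashrightarrow X_0'$ one at a time.
\begin{enumerate}
\item Let $X_0^{(i)}\to Z_0^{(i)}$ be the flopping contraction given by a nef and big $L_0$. Since $L_0-(K+\Delta)\equiv L_0$, Kawamata--Viehweg vanishing kills $H^j(mL_0)$ for $j>0$.
\item By cohomology and base change, sections of $mL_0$ lift. So $L$ is semiample over a neighbourhood of $0$ and defines a contraction extending $X_0^{(i)}\to Z_0^{(i)}$. This contraction is small near $X_0$, because a contracted divisor would meet $X_0$ in a contracted divisor.
\item By openness of relative ampleness, $-D$ is ample over $Z^{(i)}$ after shrinking $T$. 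The relative $D$-flop then exists by \cite{BCHM10}.
\item Adjunction along $X_0$, or \cite[Lemma 3.1]{HMX18}, identifies the fiber of this flop over $0$ with the flop of $X_0^{(i)}$.
\end{enumerate}
Take $D_0$ general, so that it lies in the interior of a chamber; then $D_0'$ is ample on $X_0'$. Hence $D'$ is relatively ample near $0$. Since $X\dashrightarrow X'$ is small there, $D$ is movable over $T$, and bigness of $D$ follows afterwards.
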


\begin{proof}
    The proof procedes verbatim as in \cite[proof of Theorem 4.2]{FHS21}.
\end{proof}

\subsection{Morrison--Kawamata cone conjecture}

There are several versions of (weak) cone conjectures, so we fix our notations here.

Given a log Calabi--Yau pair over $Z$ $(X/Z,\Delta)$, we write $\mathrm{Aut}(X/Z,\Delta)$ for the group of automorphisms of $X$ over the identity of $Z$ that maps $\Delta$ to itself.
Similarly, we write $\mathrm{PsAut}(X/Z,\Delta)$ for the group of pseudo-automorphisms of $X$ over the identity of $Z$ that maps $\Delta$ to itself.
We recall that a pseudo-automorphism is a birational self-map that does not contract nor extract any divisor.
In particular, if $(X/Z,\Delta)$ is a terminal log Calabi--Yau pair over $Z$, we have $\mathrm{PsAut}(X/Z,\Delta)=\mathrm{Bir}(X/Z,\Delta)$.

The following is called the {\it weak cone conjecture}, where 
\autoref{weak_nef_conjecture}
is called the {\it weak nef cone conjecture} and
\autoref{weak_movable_conjecture}
is called the {\it weak movable cone conjecture}:

\begin{conjecture}

Let $(X/Z,\Delta)$ be a
klt log Calabi--Yau pair over $Z$, where $X$ is $\mathbb{Q}$-factorial. 
\begin{enumerate}
\item\label{weak_nef_conjecture} The number of $\mathrm{Aut}(X/Z, \Delta)$-equivalence classes of faces of the cone $\mathcal{A}^e(X/Z)$
corresponding to birational contractions or fiber space structures is finite. 

\item\label{weak_movable_conjecture} The number of $\mathrm{PsAut}(X/Z, \Delta)$-equivalence classes of chambers $\mathcal{A}^e
(X_0 /Z)$
in the cone $\mathcal{M}^e
(X/Z)$ corresponding to marked SQMs $X_0 \to Z$ of $X \to Z$ is finite.
\end{enumerate}
\end{conjecture}

The following is called the {\it cone conjecture}, where
\autoref{nef_cone_conj}
is call the {\it nef cone conjecture} and
\autoref{movable_cone_conj}
is called the {\it movable cone conjecture}: 

\begin{conjecture}
Let $(X/Z,\Delta)$ be a klt log Calabi--Yau pair over $Z$, where $X$ is $\mathbb{Q}$-factorial with $\Delta$ an $\mathbb{R}$-boundary.
\begin{enumerate}
    
\item\label{nef_cone_conj} The action of $\mathrm{Aut}(X/Z,\Delta)$ on the effective nef cone $\mathcal{A}^e(X/Z)$ admits a rational polyhedral fundamental domain $\Pi$, in the sense that: 
\begin{itemize}
\item[(1)] $\mathcal{A}^e(X/Z) = \cup_{g\in \mathrm{Aut}(X/Z,\Delta)}g_*\Pi$; and

\item[(2)] $\text{Int }\Pi \cap g_*\text{Int }\Pi= \emptyset$ unless $g_* = 1$. 
\end{itemize}

\item\label{movable_cone_conj} The action of $\mathrm{PsAut}(X/Z,\Delta)$ on the effective movable cone $\mathcal{M}^e(X/Z)$ admits a rational polyhedral fundamental domain $\Pi^\prime$, in the sense that: 
\begin{itemize}
\item[(1)] $\mathcal{M}^e(X/Z) = \cup_{g\in \mathrm{PsAut}(X/Z,\Delta)}g_*\Pi^\prime$; and

\item[(2)] $\text{Int }\Pi^\prime \cap g_*\text{Int }\Pi^\prime= \emptyset$ unless $g_* = 1$. 
\end{itemize}

\end{enumerate}
\end{conjecture}

\begin{remark}
When $X$ is not $\mathbb{Q}$-factorial, we say that the nef (resp. movable) cone conjecture holds for $X$ if the nef (resp. movable) cone conjecture holds for a small $\mathbb{Q}$-factorialization of $X$. 
\end{remark}

It is known that in dimension at most $3$, the movable cone conejcture implies all the other results, see \cite[Theorem 13, Lemma 14, Lemma 15]{FXu24}.

\subsection{Iitaka fibration in families}

The following statements allow to compare the relative Iitaka fibration of a family of pairs with the Iitaka fibration of a special fiber of the family.

\begin{lemma}\label{lemma commutativity of specialization}
Let $(X,B)/S$ be a terminal log Calabi--Yau fibration, $0\in S$ be a smooth point, and $(X_0,B_0)$ be the special fiber.
Assume $(X,B+\sum_i H_i)$ is dlt, where $\{H_i\}$ is the pullback of a system of local parameters of $0\in S$. 
Let $D$ be a relatively nef and big $\QQ$-divisor of $X$ over $S$, such that $D_0$ is nef and big. 
Consider the Iitaka fibration of $X/S$ defined by $D$, say $f \colon X\to X^{amp}$, and the Iitaka fibration of $X_0$ defined by $D_0$, say $f_0 \colon X_0\to X_0^{amp}$. 

Then the following facts hold:

\begin{enumerate}
    \item\label{first item} The natural map $X_0^{amp} \to (X^{amp})_0$ is an isomorphism; and

    \item\label{second item} $f_{0,*}B_0 = (f_*B)_0$ holds under the identification in \autoref{first item}.
\end{enumerate}
\end{lemma}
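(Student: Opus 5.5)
The plan is to compare section rings by cohomology and base change. Replace $D$ by a sufficiently divisible multiple so that $D$ is Cartier and, by the basepoint-free theorem applied to the klt log Calabi--Yau pair $(X,B)$ over $S$, relatively semiample. Note that $D - (K_X+B) \equiv_S D$ is nef and big over $S$; the same holds for $D_0$ on $X_0$, since $K_{X_0}+B_0 = (K_X+B+\sum H_i)|_{X_0} \equiv 0$ by adjunction, using the dlt hypothesis. Then $X^{amp} = \mathrm{Proj}_S \bigoplus_m f_*\mathcal O_X(mD)$, where $f_*$ is the pushforward to $S$, and $X_0^{amp} = \mathrm{Proj} \bigoplus_m H^0(X_0,mD_0)$. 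Part \autoref{first item} reduces to showing that the restriction maps
\[
H^0(X, mD)\otimes \mathcal O_{S,0}/\mathfrak m_0 \longrightarrow H^0(X_0,mD_0)
\]
are surjective (after shrinking $S$ to a neighborhood of $0$) for all $m$ divisible enough. Surjectivity implies that $X_0^{amp} \to (X^{amp})_0$ is a closed immersion between varieties of the same dimension: both have dimension $\dim X_0$, since $D$ and $D_0$ are big. Since $(X^{amp})_0$ is the image of the irreducible $X_0$, the map is an isomorphism. We still have to check that $(X^{amp})_0$ is reduced; this follows because it is the image of $X_0$ and $f_*\mathcal O_X=\mathcal O_{X^{amp}}$ base changes correctly, by the vanishing below.

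For the surjectivity, cut down one parameter at a time. Write $X_i = H_1\cap\dots\cap H_i$, which are normal dlt strata, each of the form $(X_i, B|_{X_i} + \sum_{j>i} H_j|_{X_i})$ by the dlt assumption. Use the exact sequence $0\to \mathcal O_{X_{i}}(mD)\xrightarrow{\cdot t_{i+1}} \mathcal O_{X_{i}}(mD)\to \mathcal O_{X_{i+1}}(mD)\to 0$ (with $H_{i+1}\sim 0$ locally over $S$). The obstruction lies in $R^1$ of $\mathcal O_{X_i}(mD)$ over $S$, which vanishes by Kawamata--Viehweg vanishing in the form for klt/dlt pairs: $mD - (K_{X_i}+B_i) \equiv_S mD$ is nef and big over $S$. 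At each stage, use a small perturbation of the boundary to a klt one, which is possible by the terminal hypothesis and the relative bigness of $D$. The same vanishing yields cohomology and base change, giving flatness of $f_*\mathcal O_X(mD)$ near $0$.

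For \autoref{second item}, identify $X_0^{amp}=(X^{amp})_0$ via \autoref{first item}, so that $f_0 = f|_{X_0}$. Components of $f_*B$ are images of components of $B$ not contracted by $f$. Since $X_0$ meets $B$ properly (dlt with the $H_i$), components of $B_0$ are exactly the components of $B_j\cap X_0$, with the coefficients of $B$ and with multiplicity one by the dlt condition. A component of $B_j\cap X_0$ is $f_0$-exceptional iff $B_j$ is $f$-exceptional over a neighborhood of $0$, by upper semicontinuity of fiber dimension and flatness. Hence pushforward commutes with restriction, and $(f_*B)|_{(X^{amp})_0} = f_{0,*}B_0$ as cycles; this also requires $f_*B$ to not contain $(X^{amp})_0$, which is clear. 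The main obstacle I expect is the vanishing/base-change step, specifically arranging klt boundaries on the strata $X_i$ so that the relative Kawamata--Viehweg vanishing applies and $(X^{amp})_0$ is reduced. I would first try the perturbation $B \mapsto (1-\epsilon)B + \epsilon A$ with $A$ a general ample-over-$S$ divisor, combined with the bigness of $D$.
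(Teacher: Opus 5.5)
Your route differs from the paper's. It works for (1), but the argument you give for (2) relies on a false claim and has to be replaced.

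\textbf{Comparison of routes.} The paper proves (1) only by citing \cite[Theorem~4.5]{FS25}. It proves (2) via \autoref{lem:push_fwd}: since $K_X+B\sim_{\mathbb Q,S}0$, the map $f$ is crepant for $(X,B+\sum H_i)$, and the $H_i$ are pullbacks of Cartier divisors on $X^{amp}$. So restriction to $X_0$ is the different, and differents are preserved by crepant birational morphisms \cite[Propositions~4.5 and~4.6]{Kol13}; adjunction absorbs all the multiplicity bookkeeping. Your proof of (1), by relative Kawamata--Viehweg vanishing on the strata $X_i=H_1\cap\dots\cap H_i$ and the sequences given by multiplication by $t_{i+1}$, is a correct self-contained replacement for the citation. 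The obstacle you anticipate is not there: $(X,B)$ is terminal, so $\lfloor B\rfloor=0$, and adjunction for dlt pairs then shows $(X_i,B|_{X_i})$ is already klt, so no perturbation is needed. Moreover the vanishing gives $H^0(X_{i+1},mD)=H^0(X_i,mD)/t_{i+1}H^0(X_i,mD)$, hence $H^0(X,mD)\otimes k(0)\cong H^0(X_0,mD_0)$. With $R=\bigoplus_m H^0(X,mD)$ this yields $(X^{amp})_0=\mathrm{Proj}(R\otimes k(0))\cong X_0^{amp}$ directly, with no separate reducedness check.

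\textbf{The problem in (2).} Two of your assertions are false.

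First, components of $B_0$ need not carry the coefficients of $B$ with multiplicity one. Take $B_1=\{x^2=t\}$ in $\mathbb A^1_x\times\mathbb A^1_t$ with coefficient $b<\frac12$. This is compatible with terminality of $(X,B)$ and with dlt-ness (indeed plt-ness) of $(X,B+X_0)$, yet $B_1|_{X_0}=2[0]$.

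Second, and this is what your ``hence pushforward commutes with restriction'' actually uses, a component of $B_j\cap X_0$ can be $f_0$-exceptional even though $B_j$ is not $f$-exceptional. Take $X=\mathrm{Bl}_p\mathbb P^2\times\mathbb A^1_t$ with $p=[1:0:0]$, and let $f$ be the blow-down to $\mathbb P^2\times\mathbb A^1_t$, with $D$ the pullback of a relatively ample divisor. Set $B=\frac14(B_j+G)$, where $B_j$ is the strict transform of $\{x_1=tx_0\}$ and $G$ is the pullback of a general member of $|11H-4E|$. All hypotheses of the lemma hold, and $B_j$ maps onto a divisor. But $B_j\cap X_0$ contains the $(-1)$-curve of $X_0=\mathrm{Bl}_p\mathbb P^2$.

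\textbf{The repair.} The conclusion survives if you argue one prime divisor $Q\subset(X^{amp})_0\cong X_0^{amp}$ at a time.
First, $f_0$ is an isomorphism over the generic point of $Q$, since it is birational onto a normal variety.
Second, since $X_0=X\times_{X^{amp}}(X^{amp})_0$, the fibers of $f$ over $(X^{amp})_0$ are fibers of $f_0$. By upper semicontinuity of fiber dimension and Zariski's main theorem, $f$ is therefore an isomorphism over a neighborhood of the generic point of $Q$ in $X^{amp}$.
Hence, with $P$ the strict transform of $Q$, you get $\mathrm{coeff}_Q\bigl((f_*B)|_{(X^{amp})_0}\bigr)=\mathrm{coeff}_P(B|_{X_0})=\mathrm{coeff}_Q(f_{0,*}B_0)$, whatever these coefficients are.

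With this repair your argument for (2) is complete and, unlike the paper's, uses neither crepancy of $f$ nor adjunction.
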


\begin{proof}
Part \autoref{first item} follows from \cite[Theorem 4.5]{FS25}, while
part \autoref{second item} follows from \autoref{lem:push_fwd}. 
\end{proof}

\begin{lemma}\label{lem:push_fwd}
Let $(X,B)$ be a klt pair, $S = \sum S_i$ be a sum of Cartier divisors such that $(X,S+B)$ is dlt. 
Let $f \colon (X,B+S)\to (Y,B_Y+S_Y)$ be a crepant birational map such that $S_Y$ is Cartier, and $S = f^*S_Y$. 
Let $X_m$ be an irreducible component of the intersection of all irreducible components of $S$ and $Y_m$ be the image of $X_m$ in $Y$, and $f_m \colon X_m \to Y_m$ be the induced map. 

Then, $f_{m*}(B|_{X_m}) = (B_Y)|_{Y_m}$ holds.
\end{lemma}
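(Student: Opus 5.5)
The plan is to compare $B$ and $B_Y$ through a common resolution and then restrict to the stratum. Since $f$ is crepant, we have $f^*(K_Y+B_Y+S_Y)=K_X+B+S$, understood via a common log resolution $p\colon W\to X$, $q\colon W\to Y$. The hypothesis $S=f^*S_Y$ with $S_Y$ Cartier lets us subtract the $S$-parts. This gives $p^*(K_X+B)=q^*(K_Y+B_Y)$. In particular $f$ is also crepant for $(X,B)\dashrightarrow(Y,B_Y)$, and $(Y,B_Y)$ is sub-klt, hence klt wherever $B_Y\ge 0$.

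First step: reduce to a single component. Since $(X,S+B)$ is dlt, $X_m$ is a log canonical centre of $(X,S+B)$ cut out by the components $S_1,\dots,S_k$, and it is normal. Because the $S_i$ are Cartier, we can proceed by induction on $k$. For one component $S_1$, the divisor $S_{Y,1}=f_*S_1$ is Cartier with $f^*S_{Y,1}=S_1$, and adjunction gives
\[
(K_X+S+B)|_{S_1}=K_{S_1}+(S-S_1)|_{S_1}+B|_{S_1}.
\]
No different term appears: $S_1$ is Cartier and the pair is dlt, so $X$ is generically smooth along codimension-one points of $S_1\cap\operatorname{Supp}B$ away from $\lfloor B\rfloor$. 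On $Y$, the image $S_{Y,1}$ is an lc centre of $(Y,B_Y+S_Y)$ because $f$ is crepant. Adjunction along the Cartier divisor $S_{Y,1}$ gives $K_{S_{Y,1}}+\mathrm{Diff}$. The different equals $B_Y|_{S_{Y,1}}+(S_Y-S_{Y,1})|_{S_{Y,1}}$ away from codimension-two subsets where $Y$ fails to be nice.

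Second step: identify the restricted boundary through the induced crepant map $S_1\dashrightarrow S_{Y,1}$. Here I would use that crepant birational maps between plt/dlt pairs restrict to crepant birational maps between the lc centres. This holds because the divisorial valuations over $S_1$ and over $S_{Y,1}$ are computed on the same model $W$: the strict transform of $S_1$ on $W$ maps to both. The codimension-one part of the pushforward of the restricted boundary then matches $\mathrm{Diff}_{S_{Y,1}}$.

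Main obstacle: showing that $\mathrm{Diff}_{S_{Y,1}}(B_Y+S_Y-S_{Y,1})=(B_Y+S_Y-S_{Y,1})|_{S_{Y,1}}$. This says there is no extra contribution from the singularities of $Y$ along $S_{Y,1}$. I would argue as follows. The restriction $B_Y|_{S_{Y,1}}$ makes sense because, for each prime $P$ of $S_{Y,1}$, $Y$ is $\mathbb Q$-factorial near the generic point of $P$ in the relevant sense. Moreover, the Cartier property of $S_{Y,1}$ forces the different of $0$ to vanish: $K_{S_{Y,1}}=(K_Y+S_{Y,1})|_{S_{Y,1}}$ holds for a Cartier divisor on a normal, Cohen–Macaulay-in-codimension-two variety. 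Normality of $S_{Y,1}$ follows from its being an lc centre of a klt-plus-Cartier configuration, via Kollár–Shokurov connectedness and the pushforward of $\mathcal O_{S_1}$. Linearity of the different in the boundary then gives the claim. Iterating over $S_2,\dots,S_k$ on the strata yields $f_{m*}(B|_{X_m})=B_Y|_{Y_m}$.
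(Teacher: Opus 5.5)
Your proposal is correct and follows the same route as the paper. You induct on the number of components, identify the restriction with the different because $S$ and $S_Y$ are Cartier, and use that the different is compatible with crepant birational maps; the paper simply cites these last two facts from \cite[Propositions~4.5(4) and 4.6]{Kol13}, whereas you reprove them by hand via a common resolution.

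One tightening. Normality of $S_{Y,1}$ comes from $(Y,B_Y+S_{Y,1})$ being plt, since it is crepant to the plt pair $(X,B+S_1)$. Being an lc centre cut out by a Cartier divisor is not enough on its own (e.g.\ $\{xy=0\}\subset\mathbb{A}^2$). Once $S_{Y,1}$ is normal and Cartier, $Y$ is regular at the codimension-one points of $S_{Y,1}$, so the different equals the restriction with no further work.
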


\begin{proof}
By induction on the number of components of $S$, we may assume $S$ is irreducible. 
Then the restriction coincides with the different by \cite[Proposition 4.5 (4)]{Kol13}, and the different is compatible with crepant birational morphisms by \cite[Proposition 4.6]{Kol13}. 
\end{proof}

\subsection{Quasi-\'etale covers in family}\label{section_bb}

In order to apply the results on the cone conjecture in \cite{FXu25}, it is important to identify the augmented irregularity of the varieties under consideration and to possibly treat this and related invariants in family and compare them to the ones of the generic fiber.
To this end, it is relevant to study quasi-\'etale covers of log Calabi--Yau pairs in family.

For the definition of {\it irreducible symplectic variety}, we refer to \cite{EFGMS}.

\begin{lemma}\label{lemma:Hodge_symmetry}
    Let $X$ be a projective variety of klt-type.
    Then, for every $p \in \mathbb N$, we have $h^p(X,\mathcal{O}_X)=h^0(X,\Omega_X^{[p]})$.
\end{lemma}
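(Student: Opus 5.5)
The plan is to go through a resolution of singularities and combine two inputs: rationality of klt-type singularities, and the extension theorem for reflexive differentials on klt spaces.

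Fix a boundary $B$ with $(X,B)$ klt, and let $r \colon \widetilde X \to X$ be a log resolution. The argument has three steps.

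First, klt-type singularities are rational (Elkik, Kollár--Mori), so $R^i r_*\mathcal{O}_{\widetilde X}=0$ for $i>0$ and $r_*\mathcal{O}_{\widetilde X}=\mathcal{O}_X$. The Leray spectral sequence then gives $h^p(X,\mathcal{O}_X)=h^p(\widetilde X,\mathcal{O}_{\widetilde X})$ for every $p$.

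Second, $\widetilde X$ is a smooth projective variety, so Hodge symmetry applies and gives
$$h^p(\widetilde X,\mathcal{O}_{\widetilde X})=h^{0,p}(\widetilde X)=h^{p,0}(\widetilde X)=h^0(\widetilde X,\Omega^p_{\widetilde X}).$$

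Third, we compare holomorphic forms upstairs and downstairs. By the extension theorem of Greb--Kebekus--Kov\'acs--Peternell for klt pairs, $r_*\Omega^p_{\widetilde X}=\Omega^{[p]}_X$ for every $p$. This applies because $X$ is of klt type, i.e., $(X,B)$ is klt for some $B$; the extension theorem only needs the existence of such a boundary. Taking global sections gives $h^0(\widetilde X,\Omega^p_{\widetilde X})=h^0(X,\Omega^{[p]}_X)$. The inclusion $r_*\Omega^p_{\widetilde X}\subseteq \Omega^{[p]}_X$ is automatic from restriction to the regular locus; the content of the theorem is that every reflexive form on $X_{\rm reg}$ extends regularly across the exceptional locus of $r$. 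Chaining the three equalities proves the lemma for all $p$. The case $p=0$ is trivial, and the case $p=1$ recovers the statement already cited from \cite[Theorem~1]{Sch16}.

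The only non-formal step is the third, the extension of forms, and I would quote it rather than reprove it. If one wanted to avoid the full GKKP theorem, the fallback would be Kebekus--Schnell's result that forms on the regular locus of a variety with rational singularities extend to any resolution. That result directly gives the equality needed, and rationality is already established in the first step.
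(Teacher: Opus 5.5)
Your proposal is correct and follows the paper's proof essentially step for step. The paper uses rationality of klt-type singularities to pass to a projective resolution, then Hodge symmetry on the resolution, then the extension theorem of Greb--Kebekus--Kov\'acs--Peternell to identify holomorphic $p$-forms on the resolution with reflexive $p$-forms on $X$.
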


\begin{proof}
    Let $\pi \colon X' \to X$ be a projective resolution of singularities.
    Since klt singularities are rational \cite[Proposition 5.13]{KM98}, we have $h^p(X_y,\mathcal{O}_{X}) = h^p(X',\mathcal{O}_{X'})$ for all $p \in \mathbb N$.
    By Hodge symmetry, we then have $h^p(X',\mathcal{O}_{X'}) = h^0(X',\Omega_{X'}^p)$.
    In turn, by \cite[Theorem~1.4]{GKKP11}, we have $h^0(X',\Omega_{X'}^p) = h^0(X,\Omega_{X}^{[p]})$.
\end{proof}

\begin{lemma}\label{lemma:constant_diff_forms}
    Let $f \colon X \to Y$ be a projective flat morphism of complex analytic spaces over a connected base $Y$.
    Further assume that, for every $y \in Y$, $X_y$ is of klt-type.
    Let $p$ be a non-negative integer.
    Then, the dimension $h^0(X_y,\Omega_{X_y}^{[p]})$ is independent of $y \in Y$.
\end{lemma}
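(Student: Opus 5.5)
By \autoref{lemma:Hodge_symmetry}, applied fiberwise, we have $h^0(X_y,\Omega_{X_y}^{[p]}) = h^p(X_y,\mathcal{O}_{X_y})$ for every $y \in Y$, since each $X_y$ is projective of klt-type. It therefore suffices to show that $y \mapsto h^p(X_y,\mathcal{O}_{X_y})$ is constant on the connected base $Y$. We cannot use semicontinuity alone, because it only gives upper semicontinuity; the extra input has to be Du Bois-type base change.

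The key step is that klt singularities are rational, hence Du Bois (Kov\'acs), so every fiber $X_y$ has Du Bois singularities. For a projective flat family whose fibers are Du Bois, the Du Bois--Jarraud / Kollár--Kovács theorem says that the sheaves $R^pf_*\mathcal{O}_X$ are locally free and commute with arbitrary base change. In the analytic setting, we would argue as follows. The natural map $H^p(X_y,\mathbb C) \to H^p(X_y,\mathcal{O}_{X_y})$ is surjective for Du Bois $X_y$. Locally on $Y$, the family is topologically a deformation retract onto a fiber, and surjectivity of $H^p(X_U,\mathbb C)\to H^p(X_y,\mathcal{O}_{X_y})$ gives that the base change map $R^pf_*\mathcal{O}_X\otimes k(y)\to H^p(X_y,\mathcal{O}_{X_y})$ is surjective for all $p$. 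Cohomology and base change (Grauert's theorem for proper flat analytic maps) then makes all $R^pf_*\mathcal{O}_X$ locally free with fiberwise dimension $h^p(X_y,\mathcal{O}_{X_y})$. This quantity is therefore locally constant, and hence constant since $Y$ is connected.

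The main obstacle is justifying this base-change statement in the complex analytic category rather than the algebraic one. We would first handle it algebraically, where \cite{kollar_moduli}-type results (Kollár--Kovács, Du Bois families) apply directly. For the analytic case, we would localize at $y$, use that $f$ is projective to work over a small Stein neighborhood, and run the same surjectivity argument with Ehresmann-free topological retraction (proper maps are locally topologically trivial in the retraction sense, so $H^p(X_U,\mathbb C)\cong H^p(X_y,\mathbb C)$).

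An alternative approach is to pass to resolutions. If $Y$ is reduced, we can take simultaneous resolution generically and use rationality: $h^p(X_y,\mathcal{O})=h^p(X'_y,\mathcal{O})$, which equals a Hodge number of the smooth fiber and is deformation invariant. However, this needs a resolution in family, so the Du Bois route is preferable.
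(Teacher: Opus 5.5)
Your proposal is correct and follows the same route as the paper: reduce via \autoref{lemma:Hodge_symmetry} to the local constancy of $h^p(X_y,\mathcal{O}_{X_y})$, which the paper simply cites from Koll\'ar's moduli book (Complement 2.62.5) for families whose fibers are klt, hence Du Bois. Your Du Bois--Jarraud surjectivity argument, combined with cohomology and base change, just unpacks that cited result.
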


\begin{proof}
    Since all fibers have klt singularities, then the dimension $h^p(X_y,\mathcal{O}_{X_y})$ is independent of $y \in Y$ by \cite[Complement 2.62.5]{kollar_moduli}.
    Then, we may conclude by \autoref{lemma:Hodge_symmetry}.
\end{proof}

\begin{proposition}\label{prop:def_CY_IS}
    Let $f \colon X \to Y$ be a proper flat family of $K$-trivial varieties over a smooth connected base $Y$.
    Further assume that the following conditions:
    \begin{itemize}
        \item for every $y \in Y$, we have $(X_y)_{\rm reg}=(X_{\rm reg})_y$ (which will then be denoted by $X_{y,{\rm reg}}$; and
        \item the fibers $X_y$ and their regular loci $X_{y,{\rm reg}}$ form topologically (in the Euclidean topology of $Y$) locally trivial families.
    \end{itemize}
    Then, if some closed fiber is a Calabi--Yau (resp. irreducible symplectic) variety, then all fibers are Calabi--Yau (resp. irreducible symplectic) varieties.
    Similarly, the augmented irregularity $\tilde{q}(X_y)$ is independent of $y \in Y$.
\end{proposition}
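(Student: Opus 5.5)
The plan is to reduce everything to the behaviour of reflexive differential forms on quasi-\'etale covers, and then use the topological local triviality to spread quasi-\'etale covers across the family. Throughout, we may argue locally on $Y$ in the Euclidean topology and then use connectedness of $Y$: if a property (being Calabi--Yau, being irreducible symplectic, or the value of $\tilde q$) is locally constant on $Y$, it is constant.

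\emph{Step 1: covers in family.} Fix $y_0 \in Y$ and a small contractible Euclidean neighbourhood $U \ni y_0$ over which the families $X_y$ and $X_{y,{\rm reg}}$ are topologically trivial. Then $\pi_1(X_{\rm reg}|_U) \cong \pi_1(X_{y,{\rm reg}})$ for every $y \in U$, via the inclusion of fibers. A quasi-\'etale cover $X'_{y_0} \to X_{y_0}$ corresponds to a finite-index subgroup of $\pi_1(X_{y_0,{\rm reg}})$. This subgroup therefore determines a finite \'etale cover of $X_{\rm reg}|_U$. Taking normalization of $X|_U$ in it (Grauert--Remmert), we get a finite cover $X'_U \to X|_U$, quasi-\'etale over each fiber, whose fiber over $y$ is a quasi-\'etale cover $X'_y \to X_y$ (using $(X_{\rm reg})_y = X_{y,{\rm reg}}$ and normality of fibers). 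Conversely, every quasi-\'etale cover of any $X_y$, $y\in U$, arises this way. One must check that $X'_U \to U$ is flat, proper, with klt-type $K$-trivial fibers. Flatness should follow from Cohen--Macaulayness of klt singularities plus smoothness of $Y$ (miracle flatness). Klt-type and $K$-triviality of the fibers hold since quasi-\'etale covers of klt $K$-trivial varieties are klt $K$-trivial. The fiber over $y$ should be exactly the normalization of $X_y$ in the restricted cover, because the local topological triviality of regular loci prevents the branching from jumping.

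\emph{Step 2: constancy of forms.} Applying \autoref{lemma:constant_diff_forms} to $X'_U \to U$, we get that $h^0(X'_y,\Omega^{[p]}_{X'_y})$ is independent of $y \in U$ for every $p$. Hence the sets $\{h^0(X'_y,\Omega^{[p]})\}$ over all quasi-\'etale covers agree for all $y \in U$. In particular:
\begin{itemize}
\item $\tilde q(X_y) = \max h^0(X'_y,\Omega^{[1]})$ is locally constant, hence constant.
\item Condition \autoref{def_cy}\autoref{condition_covers} is locally constant.
\end{itemize}
For the Calabi--Yau property we must also check that $K_{X_y}\sim 0$ and canonical singularities persist. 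The index-one condition can be read off from $h^0(X_y,\mathcal O(K_{X_y})) = h^0(X_y,\Omega^{[n]}) \ne 0$, which is again constant by \autoref{lemma:constant_diff_forms}; a nonzero section of a numerically trivial divisor forces $K\sim 0$. Canonicity then follows from klt plus index one. For irreducible symplectic varieties, the defining condition is that on every quasi-\'etale cover the algebra of reflexive forms is generated by a single symplectic $2$-form. Dimension constancy gives $h^0(\Omega^{[p]})$ equal to $1$ for even $p$ and $0$ for odd $p$. Nondegeneracy of the form is an open condition, and its degeneracy would be detected by $\sigma^{n/2}$ vanishing in the one-dimensional space $H^0(\Omega^{[n]})$. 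The locus where $\sigma^{n/2} \ne 0$ is open, and its complement is closed. By deforming the relative form (the push-forward of the relative reflexive forms is locally free by constancy) we get openness and closedness, so nondegeneracy holds everywhere.

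\emph{Main obstacle.} I expect the main obstacle to be Step 1: making the extension of quasi-\'etale covers rigorous, and identifying the fibers of $X'_U$ with the normalized fiber covers, including flatness. Here both bulleted hypotheses are used essentially. The first is used to identify regular loci fiberwise, and the second to identify fundamental groups and to rule out extra components or non-normal fibers.
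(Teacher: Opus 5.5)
Your proposal follows the paper's proof almost step by step:
\begin{itemize}
\item a contractible neighbourhood $U$ identifying $\pi_1(X_{U,{\rm reg}})$ with $\pi_1(X_{y,{\rm reg}})$ for all $y\in U$;
\item extension of the corresponding finite \'etale cover over $X_U$ by Grauert--Remmert;
\item flatness of $X'_U\to U$;
\item constancy of $h^0(X'_y,\Omega^{[p]}_{X'_y})$ by \autoref{lemma:constant_diff_forms};
\item local constancy plus connectedness of $Y$.
\end{itemize}
The one difference here is that you obtain flatness by miracle flatness, while the paper deduces it from openness of $X'_U\to X_U\to U$ and \cite{Banica}. Miracle flatness needs the total space $X'_U$ to be Cohen--Macaulay, not just its fibres. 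This holds, for instance, when $X'_U$ is klt, as in the locally stable setting where the proposition is applied. Your treatment of $\tilde q$ and of the Calabi--Yau property is correct, and more explicit than the paper's; this includes propagating $K_{X_y}\sim 0$ and canonicity via $h^0(X_y,\Omega^{[n]}_{X_y})$.

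The gap is in the irreducible symplectic case. You rightly see that the dimensions $h^0(X'_y,\Omega^{[p]})$ alone do not detect this property. For example, if $S$ is a K3 surface and $W$ a strict Calabi--Yau 4-fold, then $S\times W$ is simply connected and has $h^0(\Omega^p)=1,0,1,0,1,0,1$, exactly as a hyperk\"ahler 6-fold does. Yet its 2-form satisfies $\sigma^2=0$. The paper's proof only asserts that being irreducible symplectic is characterized by these dimensions on all quasi-\'etale covers, so it does not handle this point either.

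Your argument, however, only gives openness of the nondegenerate locus. A local generator $\sigma$ of $f_*\Omega^{[2]}_{X/U}$ makes $\sigma^{n/2}$ a section of the line bundle $f_*\omega_{X/U}$. So the degenerate locus is a closed analytic subset of $U$. Nothing you say prevents it from being nonempty and proper, i.e.\ an irreducible symplectic fibre specializing, in a topologically trivial family, to a fibre whose Beauville--Bogomolov cover is a product such as $S\times W$. The assertion that deforming $\sigma$ gives closedness is exactly the missing step. It needs an input that detects the product structure.

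One possible such input is as follows.
\begin{itemize}
\item Spread the Beauville--Bogomolov cover of the special fibre over $U$ as in your Step 1.
\item Consider the existence of a quadratic form $q$ and a constant $c\neq 0$ with $\int\alpha^{n}=c\,q(\alpha)^{n/2}$ for all $\alpha\in H^2$ (the Fujiki relation). This is a property of the cohomology ring, hence constant in a topologically locally trivial family.
\item It holds on the irreducible symplectic fibres.
\item It fails on $H^2(Z\times W)=H^2(Z)\oplus H^2(W)$ when $Z$ is irreducible symplectic and $W$ is a positive-dimensional product of Calabi--Yau factors.
\end{itemize}
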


\begin{proof}
    Let $U$ be a contractible Euclidean open set such that $X_U \coloneqq X \times_Y U$ and $X_{U,{\rm reg}} \coloneqq X_{\rm reg} \times_Y U$ are isomorphic as topological manifolds to $X_{y_0} \times U$ and $X_{y_0,{\rm reg}} \times U$, respectively, where $y_0$ is an arbitrary point of $U$.
    In particular, $X_U$ (resp $X_{U,{\rm reg}}$) deformation retracts to $X_{y_0}$ (resp. $X_{y_0,{\rm reg}}$).
    In particular, we get identifications of the corresponding fundamental groups.
    
    Let $X'_{y_0} \to X_{y_0}$ be a quasi-\'etale cover.
    Then, it corresponds to a finite subgroup of $\pi_1(X_{y_0,{\rm reg}})$.
    By the isomorphism $X_{U,{\rm reg}} \simeq X_{y_0,{\rm reg}} \times U$, this induces an \'etale cover $X'_{U,{\rm reg}} \to X_{U,{\rm reg}}$ in the analytic category.
    By the Grauert--Remmert Extension Theorem \cite[Ch.~XII, Thm.~5.4, p.~340]{SGA1}, this cover extends to a quasi-\'etale cover of $X'_U \to X_U$ in the analytic category.
    In turn, by the Riemann existence theorem \cite[Ch.~XII, Thm.~5.1, p.~333]{SGA1}, $X'_y \to X_y$ is a quasi-\'etale cover in the algebraic category for every $y \in U$.
    By assumption, $f$ is flat, so by \cite[Ch.~XII, Prop.~3.1, p.~321]{SGA1}, also its analytification is flat.
    In particular, $X_U \to U$ is flat and hence open in the Euclidean topology by \cite[Ch.~V, Thm.~2.12]{Banica}.
    Also, $X'_U \to X_U$ is open by \cite[Ch.~V, Lem.~3.2]{Banica}.
    Hence, the map $X'_U \to U$ is open.
    In turn, by \cite[Ch.~V, Thm.~2.13]{Banica},
    $X'_U \to U$ is flat and by \autoref{lemma:constant_diff_forms} $h^0(X'_y,\Omega_{X'_y}^{[p]})$ is independent of $y \in U$.

    Since $y_0$ is an arbitrary point of $U$, this shows that the quasi-\'etale covers of any two points of $U$ are in natural one-to-one correspondence.
    Furthermore, this correspondence preserves the dimension $h^0(X'_y,\Omega_{X'_y}^{[p]})=h^p(X'_y,\mathcal{O}_{X'_y})$.
    Therefore, since being Calabi--Yau  (resp. irreducible symplectic) is characterized by the values $h^0(X'_y,\Omega_{X'_y}^{[p]})$ for arbitrary quasi-\'etale covers, it follows that both being and not being Calabi--Yau (resp. irreducible symplectic) are open properties in the Euclidean topology of $Y$.
    Similarly, the augmented irregularity is defined as the maximum $h^1(X'_y,\mathcal{O}_{X'_y})$ among all quasi-\'etale covers, and therefore the augmented irregularity is locally constant.
    Then, since $Y$ is connected, the claim follows.
\end{proof}

As recalled in \autoref{subsection_log_cy}, a klt $K$-trivial variety admits a quasi-\'etale cover that splits as a product of Calabi--Yau, abelian, and irreducible symplectic varieties.
More generally, a klt log Calabi--Yau pair admits a quasi-\'etale cover that splits as a product of a rationally connected klt log Calabi--Yau pair and Calabi--Yau, abelian, and irreducible symplectic varieties;
see \cite{MW25}.
We call any such cover a {\it Beauville--Bogomolov cover}.

\begin{proposition}\label{bb_in_family}
    Let $f \colon (X,B) \to T$ be a locally stable family of klt log Calabi--Yau pairs over a normal base $T$.
    Then, up to a locally closed decomposition of $T= \sqcup_i T_i$ and finite covers $V_i \to T_i$, the families $(X_{V_i},B_{V_i}) \to V_i$ obtained by base change admit quasi-\'etale covers that induce a Beauville--Bogomolov cover fiberwise.
    In parituclar, the augmented irregularity of a closed fiber over $t \in V_i$ coincides with the augmented irregularity of the generic and geometric generic fibers of $X_{V_i} \to V_i$.
\end{proposition}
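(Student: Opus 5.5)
We prove \autoref{bb_in_family} by Noetherian induction on $T$. It suffices to find a non-empty open subset $T^\circ \subset T$ and a finite cover $V \to T^\circ$ over which the base-changed family admits a quasi-\'etale cover inducing a Beauville--Bogomolov cover fiberwise; we then repeat on $T \setminus T^\circ$, with reduced structure and normalized if necessary.

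\emph{Step 1: a BB cover of the geometric generic fiber.} Let $\bar\eta$ be a geometric generic point of $T$. The fiber $(X_{\bar\eta},B_{\bar\eta})$ is a klt log Calabi--Yau pair over an algebraically closed field of characteristic $0$. By \cite{MW25} and the Lefschetz principle, it admits a quasi-\'etale cover $X'_{\bar\eta} \to X_{\bar\eta}$ splitting as a product of a rationally connected klt log Calabi--Yau pair and Calabi--Yau, abelian, and irreducible symplectic factors. This cover and its splitting involve only finitely many equations, so they are defined over a finite extension of $k(T)$. Hence, after shrinking $T$ and taking a finite cover $V \to T$, we obtain:
\begin{itemize}
\item a finite morphism $X'_V \to X_V$ that is \'etale over $(X_V)_{\rm reg}$ in codimension one on fibers;
\item an isomorphism $X'_V \simeq Z_V \times_V \prod_j W_{j,V}$ over $V$, where each factor is a flat family over $V$.
\end{itemize}
Shrinking $V$ further, the fibers of $X'_V \to X_V$ over closed points are quasi-\'etale covers, and the product decomposition specializes.

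\emph{Step 2: the factors stay of the right type.} After shrinking $V$ once more, generic flatness and a Whitney stratification give the hypotheses of \autoref{prop:def_CY_IS}: the equality $(X_v)_{\rm reg}=(X_{\rm reg})_v$ and topological local triviality of each factor family and of its regular locus. Being Calabi--Yau or irreducible symplectic is detected on the geometric generic fiber, hence on a very general closed fiber via the Lefschetz principle. By \autoref{prop:def_CY_IS}, the property then holds for every closed fiber of each factor family. Abelian factors remain abelian, since they form abelian schemes after shrinking. The rationally connected pair factor remains rationally connected and klt, because these properties are open in locally stable families. Therefore $X'_v \to X_v$ is a Beauville--Bogomolov cover for every $v \in V$.

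\emph{Step 3: the irregularity claim.} The augmented irregularity is the irregularity of a Beauville--Bogomolov cover, which here equals the dimension of the abelian factor. This dimension is constant over $V$ and agrees with its value at the generic and geometric generic fibers. The same conclusion also follows directly from \autoref{prop:def_CY_IS}.

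The main obstacle is Step 2 in the singular setting: ensuring that the regular loci vary in a topologically locally trivial way, so that \autoref{prop:def_CY_IS} applies and every quasi-\'etale cover of a special fiber extends. We plan to handle this by stratifying $V$ with respect to the singular locus of $X_V$ and invoking Thom's first isotopy lemma on each stratum.
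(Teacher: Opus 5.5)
Your proposal is correct and follows essentially the same route as the paper: you spread out a Beauville--Bogomolov cover of the geometric generic fiber over a finite cover of a stratum, transfer the types of its factors to a very general closed fiber by the Lefschetz principle, and propagate them to all closed fibers, using \autoref{prop:def_CY_IS} for the Calabi--Yau and irreducible symplectic factors and deformation arguments (simplified, as you note, by Noetherian shrinking) for the abelian and rationally connected ones. One small caution on Step~3: $\tilde q$ equals the dimension of the abelian factor only for a cover whose rationally connected factor has vanishing augmented irregularity. For instance, let $E$ be an elliptic curve with an automorphism $\omega$ of order $3$; then $(E\times E)/\langle(\omega,\omega)\rangle$ is a rational klt surface with $K\equiv 0$ and $\tilde q=2$, yet the identity map is a cover of the allowed shape. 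So the justification to rely on is your second one, namely the argument of \autoref{prop:def_CY_IS} applied to the original family, which only uses that the fibers and their quasi-\'etale covers are of klt type.
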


\begin{proof}
    To begin, we may stratify $T= \sqcup_i T_i$ so that, over each component $T_i$, the fibers $X_t$ and their regular loci $(X_t)_{\rm reg}$ form topologically locally trivial families;
    see, e.g., \cite[\S~1.5-1.7]{GMP88}, or \cite[Lemma 1.5.A]{Nori83} for the regular loci $(X_t)_{\rm reg}$.
    Up to a further stratification, we may also assume that each $T_i$ is smooth.
    We observe once and for all that, by Noetherian induction, we may shrink $T_i$ and refine the stratification whenever necessary.

    Recall that, for every $T_i$, the geometric generic fiber is isomorphic, as a scheme over $\mathbb Z$, to a very general fiber.
    Said otherwise, the two varieties are identified by an automorphism of $\mathbb C$;
    see, e.g., \cite[Lemma~21]{ST19}.
    Notice that this procedure also identifies the regular loci and their \'etale covers.
    Therefore, the set of quasi-\'etale covers for some $X_t$ with $t \in T_i$ very general coincides with the set of quasi-\'etale covers of the geometric generic fiber $X_{\overline{k(T_i)}}$ over $T_i$.

    Now, by \cite{MW25} and the Lefschetz principle, $(X_{\overline{k(T_i)}},B_{\overline{k(T_i)}})$ admits a Beauville--Bogomolov cover $\widetilde X_{\overline{k(T_i)}} \to  X_{\overline{k(T_i)}}$, which is then defined over a finite field extension of $k(T_i)$.
    Let $V_i \to T_i$ the corresponding finite cover;
    notice that, up to shrinking $T_i$ and refining the stratification of $T$, we may further assume that $V_i$ is smooth and $V_i \to T_i$ \'etale.
    Up to further shrinking $T_i$ and $V_i$, we may assume that the product structure of the cover $\widetilde X_{k(V_i)} \to X_{k(V_i)}$ spreads out over $V_i$ and that this gives a locally stable family of log Calabi--Yau pairs.
    To summarize, up to an alteration $V_i \to T_i$ and possibly refining the original stratification of $T$, we construct a quasi-\'etale cover of the generic fiber of the family $\widetilde X_{k(V_i)} \to X_{k(V_i)}$;
    then, this cover spreads out over $V_i$ and, by specialization, gives a quasi-\'etale cover of each closed fiber.
    What is left to verify is that, over any closed fiber, the induced cover is indeed a Beauville--Bogomolov cover.

    Now, we observe the following facts:
    \begin{itemize}
        \item[(a)] since the projective line is defined over $\mathbb Z$, being rationally connected is invariant under base change by a field automorphism of proper varieties;

        \item[(b)] the canonical sheaf and the structure sheaf are invariant under base change by a field automorphism;

        \item[(c)] the dimension of cohomology groups are invariant under base change by a field automorphism; and

        \item[(d)] a field automorphism transforms a quasi-\'etale cover into a quasi-\'etale cover.
    \end{itemize}
    In particular, being a log Calabi--Yau pair is invariant under field automorphism.
    Furthermore, being a Calabi--Yau or irreducible symplectic variety is characterized by the dimension of certain cohomology groups of quasi-\'etale covers and is then an invariant property under field automorphism.
    Then, the same holds true for abelian varieties;
    see, e.g., \cite{CH01}.
    In particular, for a very general closed point $t \in V_i$, an abelian (resp. Calabi--Yau, resp. irreducible symplectic, resp. geometrically rationally connected) factor of $\widetilde X _{k(V_i)}$ specializes to an abelian (resp. Calabi--Yau, resp. irreducibly symplectic, resp. rationally connected) factor of $\widetilde X_{V_i,t}$.
    Then, it follows that being a Beauville--Bogomolov cover is invariant under field automorphisms.
    
    In turn, the chosen Beauville--Bogomolov cover $\widetilde X_{V_i} \to  X_{V_i}$ induces a Beauville--Bogomolov cover for a very general point of $V_i$.
    To conclude, we need to assess that this is the case for every closed point in $V_i$.
    That is, we have to rule out that, for some $t_0 \in V_i$, some factor of the product decomposition of $\widetilde X_{V_i,t_0}$ is not of one of the allowed types:
    rationally connected, abelian, Calabi--Yau, or irreducible symplectic.
    By \cite[Corollary~IV.3.5]{Kol96} and \cite{HM07}, the rationally connected factor specializes to a rationally connected factor for every point in $V_i$.
    Similarly, by \cite{CH01} and deformation invariance of plurigenera, the abelian factor specializes to an abelian factor.
    In turn, the only possible source of contradiction is that some Calabi--Yau or irreducible symplectic factor does not specialize to such.
    Then, also Calabi--Yau or irreducible symplectic factors specialize to factors of the same type by \autoref{prop:def_CY_IS}, hence the claim follows.
\end{proof}

\section{Main technical results}

In this section, we collect the main technical statements of this work.

\subsection{$K$-trivial orbibundles in low dimensions}\label{subsection_orbibundle}

The main goal of this work is to characterize elliptic Calabi--Yau 4-folds.
In view of \autoref{lemma:orbibundle_structure}, Calabi--Yau 4-folds that are crepant birational to a $K$-trivial orbibundle can be treated separately from the general case.
Thus, in this subsection, we collect properties of this special class of Calabi--Yau 4-folds.

\begin{proposition}\label{prop:orbibundle}
Let $(\widehat Y \times F)/G \to \widehat Y /G \eqqcolon Y$ be an orbibundle such that $F$ is an elliptic curve and $Y$ is a $K$-trivial $(n-1)$-fold.
Further assume that a  crepant model of $(\widehat Y \times F)/G$, say $X$, is a Calabi--Yau $n$-fold. 
We denote the index one cover of $Y$ by $Y^\prime$. 
Then, the following facts hold:
\begin{itemize}
    \item there exists a factorization $\widehat Y \to Y^\prime \to Y$; 
    \item $\widehat Y$ and $Y^\prime$ are canonical $K$-trivial $(n-1)$-folds;
    \item $Y$ is a rationally connected klt variety; and
    \item the index of $K_Y$ is 2, 3, 4, or 6. 
\end{itemize}
Furthermore, if the augmented irregularity $\tilde q (Y)$ vanishes and $n=4$, $\widehat Y$ and $Y^\prime$ are Calabi--Yau 3-folds.

Conversely, given a rationally connected klt variety $Y$ such that $K_Y \equiv 0$
and the index of $K_Y$ is 2, 3, 4, or 6, one can construct a canonical $K$-trivial orbibundle $X$ with linearly trivial canonical class.
Furthermore, 
the orbibundle $X$ is simply connected.
In particular, if $n=4$, $\tilde q (Y)$ vanishes, and $X$ admits a crepant resolution $X' \to X$, $X'$ is a Calabi--Yau 4-fold.
\end{proposition}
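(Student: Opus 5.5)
The plan is to analyze the action of $G$ on the holomorphic forms of $\widehat Y \times F$. Since $X$ is crepant to $(\widehat Y\times F)/G$, and the latter has klt singularities with $K\sim_{\mathbb Q}0$, the Calabi--Yau condition on $X$ forces $H^0$ of reflexive $i$-forms of the quotient to vanish for $0<i<n$. Here $\widehat Y\times F$ is a quasi-\'etale cover only where $G$ acts freely in codimension one. We may first replace $G$ by its quotient acting faithfully and assume the quotient map is quasi-\'etale. This is possible because an orbibundle with $X$ $K$-trivial forces $G$ to act on the product without fixed divisors; otherwise $K_X$ would differ from the pushforward of $K_{\widehat Y\times F}$.

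First, the $G$-invariant $1$-form $dz$ on $F$ must not be invariant, so $G$ acts on $H^0(F,\omega_F)$ through a nontrivial character $\chi$ with image $\mu_m$. Since the image of $G$ in $\mathrm{Aut}(F)$ fixing a point has order in $\{1,2,3,4,6\}$, we get $m\in\{2,3,4,6\}$. Next, $K_{\widehat Y\times F}\sim 0$ and $G$ acts trivially on the generator $\omega_{\widehat Y}\wedge dz$, because $X$ has $K_X\sim 0$ and the quotient is quasi-\'etale. Hence $G$ acts on $\omega_{\widehat Y}$ via $\chi^{-1}$. Consequently, the kernel $H=\ker\chi$ acts on $\widehat Y$ preserving $\omega_{\widehat Y}$. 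The quotient $\widehat Y\to Y$ factors through $\widehat Y/H$, on which $K\sim 0$, and $\widehat Y/H\to Y$ is cyclic of degree $m$ with $G/H\simeq\mu_m$ acting faithfully on the generator. This identifies $\widehat Y/H$ with the index-one cover $Y'$ and gives index$(K_Y)=m\in\{2,3,4,6\}$. $\widehat Y$ is canonical since it is klt with $K\sim0$, and so is $Y'$, which has $K\sim 0$ and klt singularities. One must still check that $\widehat Y\to Y'$ is quasi-\'etale, which follows from $\widehat Y\times F\to X$ being quasi-\'etale. Rational connectedness of $Y$ follows from \autoref{prop:rc} applied to the fibration $X\to Y$.

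For $n=4$ and $\tilde q(Y)=0$, any quasi-\'etale cover of $\widehat Y$ or $Y'$ is a quasi-\'etale cover of $Y$. So their reflexive $1$-forms vanish, and reflexive $2$-forms vanish by Serre duality/Hodge symmetry (\autoref{lemma:Hodge_symmetry}) together with $K\sim 0$ in dimension $3$. Hence both are Calabi--Yau 3-folds.

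For the converse, take the index-one cover $Y'\to Y$, which is cyclic with group $\mu_m$, $m\in\{2,3,4,6\}$. Choose an elliptic curve $F$ with an automorphism of order $m$ acting on $dz$ by the inverse character: $j=1728$ for $m=4$, $j=0$ for $m=3,6$, and any $F$ for $m=2$. Then set $X=(Y'\times F)/\mu_m$. Since $\mu_m$ acts freely in codimension one on $Y'$, the quotient is quasi-\'etale, and $\omega_{Y'}\wedge dz$ is invariant, so $K_X\sim 0$. Canonicity follows because $X$ is klt with $K_X$ Cartier. Simple connectedness is the main obstacle. I would show that $\pi_1(X)$ surjects from the orbifold fundamental group, and use the projection $X\to F/\mu_m\simeq\mathbb P^1$, whose fibers over points are quotients of $Y'$. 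Alternatively, I would use $X\to Y$ with fibers $F$, where the fiber over a point of $Y$ whose preimage has a $\mu_m$-fixed or stabilized point is a quotient of $F$ by a nontrivial rotation, hence $\mathbb P^1$. Since $Y$ is simply connected (\autoref{prop:rc}) and the general fiber's $\pi_1(F)$ maps to $\pi_1(X)$, $\pi_1(X)$ is a quotient of $\pi_1(F)$. Its image dies because of loops degenerating into the rational fibers over branch points, using the $\mu_m$-eigenvalue on $H_1(F)$ having no fixed vectors. Finally, if $X'\to X$ is a crepant resolution with $n=4$ and $\tilde q(Y)=0$, $X'$ is smooth and simply connected with $K\sim0$. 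Its holomorphic forms are $G$-invariant forms on $Y'\times F$ by \cite{GKKP11}. Via Künneth these are combinations of forms from $Y'$ (only degree $0,3$ by the above) and $1,dz$. The character computation shows that only degrees $0$ and $4$ survive, so $X'$ is a Calabi--Yau 4-fold.
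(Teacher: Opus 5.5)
You have a genuine gap in the simple connectedness of $X$, which is the only nontrivial point of the converse. Elsewhere your forward direction and the construction of $(Y'\times F)/\mu_m$ follow the paper's proof. You get $m\ge 2$ from the fact that $dz$ cannot be $G$-invariant, where the paper argues "$Y$ rationally connected, hence not canonical"; both work. Your final character count on $1,\,dz,\,\omega_{Y'},\,\omega_{Y'}\wedge dz$ is a valid substitute for the paper's K\"unneth computation of $H^2(X,\mathcal O_X)$.

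\textbf{The gap.} The step ``since $Y$ is simply connected, $\pi_1(X)$ is a quotient of $\pi_1(F)$'' does not follow from $\pi_1(Y)=1$. For example, let $\mathbb Z/2$ act on $\mathbb P^1$ by $z\mapsto -z$ and on $F$ by translation by a $2$-torsion point. Then $(\mathbb P^1\times F)/(\mathbb Z/2)\to\mathbb P^1$ has simply connected base, yet the image of $\pi_1(F)$ has index $2$ in the fundamental group of the total space, because of the two double fibers.

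The missing input is that every stabilizer $G_{\hat y}$ acts on $F$ with a fixed point (in your construction, by a nontrivial rotation). Then $f^{-1}(V)=(U\times F)/G_{\hat y}$ retracts onto $F/G_{\hat y}\cong\mathbb P^1$ for small $V=U/G_{\hat y}$. With this, the argument closes as follows:
\begin{enumerate}
\item A covering of $X$ that is trivial over every $f^{-1}(V)$ descends to $Y$. Hence $\pi_1(X)$ is normally generated by the images of the groups $\pi_1(f^{-1}(V))$, each of which is $\pi_1(F)$ or trivial.
\item Fiber loops die once the fiber is moved into a neighborhood of a branch fiber. Such a fiber exists because $\widehat Y\to Y$ is a nontrivial quasi-\'etale cover of a simply connected variety.
\end{enumerate}

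Your other mechanism, that $\mu_m$ has no fixed vectors on $H_1(F)$, cannot replace this. Even granting the previous step, it only shows $\pi_1(X)$ is a quotient of the coinvariants $\Lambda/(1-r)\Lambda$. These are finite but in general nontrivial, e.g. $(\mathbb Z/2)^2$ when $m=2$.

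\textbf{How the paper handles it.} The paper encodes exactly these two inputs group-theoretically. It writes the orbibundle as $(\widetilde Y\times\mathbb C)/(\mathbb Z^2\rtimes\widetilde G)$, with $\widetilde Y$ the possibly non-compact universal cover of $\widehat Y$, and applies Armstrong's theorem. The common fixed point $P_0$ of $G$ on $F$ shows that elements of $\widetilde G$ with fixed points on $\widetilde Y$ also have fixed points on the product. So $\widetilde G$ lies in the normal subgroup generated by elements with fixed points; this is your missing step. The nontriviality of the rotation then shows that $g\cdot x$ has a fixed point for every translation $x\in\mathbb Z^2$, which kills the translations, i.e. your fiber loops.

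\textbf{A smaller error.} Your justification that $\widehat Y\to Y'$ is quasi-\'etale ``because $\widehat Y\times F\to(\widehat Y\times F)/G$ is'' is wrong. An element fixing a divisor of $\widehat Y$ while translating $F$ acts freely on the product. The correct reason is either of the following:
\begin{enumerate}
\item Hurwitz: $K_{\widehat Y}\sim 0$ and $K_Y\equiv 0$ force the ramification divisor to vanish.
\item Elements of $\ker\chi$ preserve the volume form of $\widehat Y$, hence cannot be pseudo-reflections.
\end{enumerate}
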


\begin{remark}
The assumption that $\tilde q (Y)$ vanishes is not artificial. In fact, when we have $n=4$, $\tilde q (Y)>0$, the set of such Calabi-Yau 4-fold $X$ is bounded by \autoref{main_thm_technical}. 
\end{remark}

\begin{proof}
The variety $Y$ is rationally connected by \autoref{prop:rc} and of klt type by the canonical bundle formula, see \autoref{rmk:cbf}.
By definition of an orbibundle, we know that $\widehat Y \times F \to (\widehat Y \times F)/G$ is quasi-\'etale. 
So the canonical divisor of $\widehat Y \times F$ is numerically trivial. 
Similarly, since $X$ admits a global $n$-form, so does $(\widehat Y \times F)/G$, which implies $\widehat Y \times F$ also admits a global $n$-form. 
But $K_{\widehat Y \times F} = \pi_1^*K_{\widehat Y}$, so $K_{\widehat Y}$ is linearly trivial. 
By $K_{(\widehat Y \times F)/G} \sim 0$, it follows that $G$ acts trivially on $H^0(\widehat Y \times F,\omega_{\widehat Y \times F}) = H^0(\widehat Y,\omega_{\widehat Y})\otimes H^0( F,\omega_{F})$. 

Let $G^\prime$ be the subgroup of $G$ consisting of elements which act trivially on $H^0(F,\omega_F)$. 
Then $G^\prime$ also acts trivially on $H^0(\widehat Y,\omega_{\widehat Y})$. 
So $Y^\prime = {\widehat Y}/G^\prime$ is a canonical $K$-trivial 3-fold with $K_{Y^\prime} \sim 0$. 
Moreover, $Y = Y^\prime/(G/G^\prime)$ and $G/G^\prime$ acts faithfully on $H^0(Y^\prime,\omega_{Y^\prime})$, so $Y^\prime$ is the index $1$ cover of $Y$.
By the classification of automorphisms of elliptic curves and the fact that translations act trivially on $H^0(F,\omega_F)$, $|G/G^\prime|$ is equal to 1, 2, 3, 4, or 6;
see, e.g., \cite[Corollary IV.4.7]{Har77}.
Since $Y$ is rationally connected with $K_Y \equiv 0$,
$Y$ is klt but not canonical, so the index of $K_Y$ is at least 2. 
So the index of $K_Y$ is 2, 3, 4, or 6.

Now, if $\tilde q (Y) = 0$ and $n=4$, then we have $\tilde q(Y') = \tilde q (\widehat Y) = 0$.
Then, for any quasi-\'etale cover $Y''$ of one of these varieties, we have $h^1(Y'',\mathcal{O}_{Y''})=0$ and, by Serre duality, $h^2(Y'',\mathcal{O}_{Y''})=0$.
In turn, by \cite[Theorem 1]{Sch16}, we have $h^0(Y'',\Omega_{Y''}^{[i]})=0$ for $i=1,2$ and it follows that $Y'$ and $\widehat Y$ are Calabi--Yau.

Conversely, given such $Y$, we construct the desired orbibundle.
Let $d$ be the index of $K_Y$. 
Let $\widehat Y \to Y$ be the global index 1 cover with Galois group $G = \mathbb{Z}/d\mathbb{Z}$. 
Choose an elliptic curve $F$ with a faithful action of $G$ inducing a faithful action on $H^0(F,\omega_F)$.
Possibly composing with a translation, we may assume the action of $G$ on $F$ admits a fixed point $P_0$;
see, e.g., \cite[Corollary IV.4.7]{Har77}.
Possibly replacing the $G$-action on $F$ by the inverse, we may assume $G$ acts trivially on $H^0(\widehat Y \times F,\omega_{\widehat Y \times F}) = H^0(\widehat Y,\omega_{\widehat Y})\otimes H^0( F,\omega_{F})$. 
Note that the action of $G$ on $\widehat Y \times F$ is free in codimension $1$. 
So, we have $h^0((\widehat Y \times F)/G, \omega_{(\widehat Y \times F)/G}) = 1$, i.e., $K_{(\widehat Y \times F)/G}$ is linearly equivalent to $0$.
Moreover, by the ramification formula, $(\widehat Y \times F)/G$ is klt, so $(\widehat Y \times F)/G$ must be canonical. 

Let $\widetilde Y \to \widehat Y$ be the universal cover with Galois group $\Gamma$.
Then, there is a 
possibly infinite
group $\widetilde G$, which is an extension of $G$ by $\Gamma$, that acts on $\widetilde Y$ so that $\widetilde Y / \widetilde G = \widehat Y / G = Y$.
By construction, $\widetilde G$ acts effectively on $\widetilde Y$.
Thus, $\widetilde G$ acts effectively on $\widetilde Y \times F$, where the action on $F$ factors through $\widetilde G \to G$.
In particular, we have $(\widehat Y\times F)/G = (\widetilde Y \times F)/\widetilde G$.

Topologically, we have $(\widetilde Y\times F)/ \widetilde G = (\widetilde Y \times \mathbb{C})/( \mathbb{Z}^2 \rtimes \widetilde G)$.
By \cite{Arm68}, to show that $(\widetilde Y\times F)/ \widetilde G$ is simply connected, it suffices to show $ \mathbb{Z}^2 \rtimes \widetilde G$ is generated by elements with fixed points.
This is 
addressed in \autoref{lemma:fixed_points}.

To conclude, assume that $n=4$, $\tilde q (Y)$ vanishes, and the orbibundle $X$ admits a crepant resolution $X' \to X$.
Since $X$ is simply connected, then so is $X'$ by \cite{Tak03}.
Thus, we have $h^1(X',\mathcal{O}_{X'})=0$ and, by Serre duality, $h^3(X',\mathcal{O}_{X'})=0$.
Thus, to conclude, it suffices to show $h^2(X,\mathcal{O}_X)=0$.
By the vanishing of the augmented irregularity of $Y$, we have $H^1(\widehat Y,\mathcal{O}_{\widehat Y})$.
Since $\widehat Y$ is a canonical 3-fold, by Serre duality and $K_{\widehat Y} \sim 0$ we also get $H^2(\widehat Y,\mathcal{O}_{\widehat Y})=0$.
Thus, by the K\"unneth formula and the vanishing of the groups $H^i(\widehat Y,\mathcal{O}_{\widehat Y})$
for $i = 1,2$ and $H^2(F,\mathcal{O}_F)$, we have
$H^2(\widehat Y \times F,\mathcal{O}_{\widehat Y \times F}) = 0$.
In turn, by considering $G$-invariants under the $G$-action, we obtain $H^2(X,\mathcal{O}_X)=0$, and the claim follows.
\end{proof}

\begin{lemma}\label{lemma:fixed_points}
    Let $(\widetilde Y \times F)/\widetilde G = (\widetilde Y \times \mathbb C)/(\mathbb{Z}^2\rtimes \widetilde G )$ be as in the proof of \autoref{prop:orbibundle}.
    Then,
    the normal subgroup of $\mathbb{Z}^2 \rtimes \widetilde G$ generated by elements with fixed points is the whole group.
\end{lemma}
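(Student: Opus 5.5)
Let $\Pi \coloneqq \mathbb{Z}^2 \rtimes \widetilde G$ and let $N \trianglelefteq \Pi$ be the normal subgroup generated by the elements having a fixed point on $\widetilde Y \times \mathbb C$. The plan is to show that $N$ contains enough elements to force $\Pi/N$ to be trivial. To get at $\Pi/N$, we use that $\widetilde Y$ is simply connected (it is a universal cover), so $(\widetilde Y\times\mathbb C)/\Pi$ has fundamental group $\Pi/N$ by \cite{Arm68}. The argument will therefore work on the quotient side wherever that is convenient.

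\textbf{First step: $\Pi/N$ is a quotient of $\mathbb{Z}^2$.} Every element of $\widetilde G$ that has a fixed point on $\widetilde Y$ also has a fixed point on $\widetilde Y\times\mathbb C$, once we compose it with a suitable translation in $\mathbb Z^2$. The reason is that its action on $F$ is an automorphism with linear part a root of unity. When that root is nontrivial, the affine map $z\mapsto \zeta z + b$ on $\mathbb C$ has a fixed point. When the root is trivial, the action on $F$ is a translation, and we cancel it by an element of $\mathbb Z^2$ only if it lifts to $0$; this needs care. Since $\widetilde Y/\widetilde G = Y$ and $\pi_1(Y)$ is trivial by \autoref{prop:rc}, \cite{Arm68} shows that $\widetilde G$ is generated by elements with fixed points on $\widetilde Y$. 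So the subgroup $N$ together with $\mathbb Z^2$ generates $\Pi$, and hence $\Pi/N$ is a quotient of $\mathbb Z^2$, i.e. it is abelian and generated by the image of the lattice.

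\textbf{Second step: kill the lattice.} Here we use the element $g_0\in\widetilde G$ mapping to a generator of $G=\mathbb Z/d$. By construction $G$ acts faithfully on $H^0(F,\omega_F)$ with a fixed point $P_0$, so $g_0$ acts on $\mathbb C$ as $z\mapsto\zeta z$ with $\zeta\neq 1$, up to lattice translation. Suppose $g_0$ (or some lift of it) has a fixed point on $\widetilde Y$. Then for every $\lambda\in\mathbb Z^2$, the element $\lambda g_0$ acts by $z\mapsto \zeta z+\lambda$ and still has a fixed point. Hence $g_0$ and $\lambda g_0$ both lie in $N$, and therefore $\lambda\in N$. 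Thus $\mathbb Z^2\subset N$, and combined with the first step this gives $N=\Pi$.

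\textbf{Main obstacle.} The hard part is producing a lift of a generator of $G$ (or of any element with $\zeta\neq1$, whose fixed-point images generate enough) that has a fixed point on $\widetilde Y$. The plan for this is to use that $Y$ is klt but not canonical, as shown in \autoref{prop:orbibundle}, so the index one cover $\widehat Y\to Y$ is ramified. More precisely, since $\pi_1(Y)=1$, the group $G$ is generated by the images of stabilizers of points of $\widehat Y$ (again by \cite{Arm68}), so some element with nontrivial $\zeta$ has a fixed point on $\widehat Y$, and this lifts to a fixed point on $\widetilde Y$. Even if no single generator is fixed, the elements with $\zeta\neq1$ that do have fixed points still generate $G$. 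For each such element, the argument of the second step shows that $(1-\zeta)\mathbb Z^2\subset N$. Finally, combining with the fixed-point elements of trivial $\zeta$ together with the abelian quotient structure should yield $\Pi/N=1$. The delicate bookkeeping here is handling these translation-type elements.
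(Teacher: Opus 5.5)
Your Steps 1 and 2 follow the paper's argument. \cite{Arm68} applied to $\widetilde G$ acting on $\widetilde Y$ puts $\widetilde G$ inside $N$. An element $g\in\widetilde G$ with a fixed point on $\widetilde Y$ and $\zeta\neq 1$ gives $\lambda g\in N$ for every $\lambda\in\mathbb Z^2$, hence $\mathbb Z^2\subset N$. But the write-up does not close, because you never use the fact that resolves both difficulties you flag: $\Gamma=\ker(\widetilde G\to G)$ is the deck group of the universal cover $\widetilde Y\to\widehat Y$, so it acts \emph{freely} on $\widetilde Y$. Since $G$ fixes $P_0$ and acts faithfully on $H^0(F,\omega_F)$, each $g\in\widetilde G$ acts on $\mathbb C$ (with $0$ lying over $P_0$) as $z\mapsto\zeta_g z$, and $\zeta_g=1$ exactly when $g\in\Gamma$. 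Consequently:
\begin{itemize}
\item every element of $\widetilde G$ fixes $0$, so a fixed point $y\in\widetilde Y$ directly gives the fixed point $(y,0)$, with no lattice correction needed in Step 1;
\item the only ``translation-type'' elements of $\widetilde G$ lie in $\Gamma$, and none of them has a fixed point unless it is trivial;
\item every nontrivial $g\in\widetilde G$ with a fixed point on $\widetilde Y$ has $\zeta_g\neq 1$. Such $g$ exist because $\widetilde G$ is generated by fixed-point elements and $\widetilde G\neq\Gamma$ (as $d\geq 2$), so the ramification argument on $\widehat Y$ is unnecessary.
\end{itemize}

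The second problem is your last paragraph. You think Step 2 needs $g_0$ to lift a generator of $G$, and for other elements you claim only $(1-\zeta)\mathbb Z^2\subset N$. Neither is right. Step 2 used only that $\zeta\neq 1$ and that $g_0$ fixes a point of $\widetilde Y$, which translation by $\lambda$ does not move. So it applies verbatim to any such $g$ and gives all of $\mathbb Z^2\subset N$. The weaker inclusion $(1-\zeta)\mathbb Z^2\subset N$ is just what the commutators $[\lambda,g]=(\zeta-1)\lambda$ from the normal closure of $g$ alone give, and it is genuinely insufficient. For $d=2$ one has $\zeta=-1$, which leaves $\mathbb Z^2/2\mathbb Z^2\neq 0$. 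By the freeness of $\Gamma$ there are no nontrivial fixed-point elements with $\zeta=1$ to make up the difference, so the ``bookkeeping'' you propose cannot succeed. What forces $\Pi/N=1$ is that every $\lambda g$ has a fixed point. With these two corrections your argument becomes exactly the paper's proof.
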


\begin{proof}
Recall that $\mathbb{Z}^2 \rtimes \widetilde G$ acts on $\widetilde Y \times \mathbb C$ componentwise.
Thus, fix $g\in \mathbb Z ^2 \rtimes \widetilde G$ and let $g^\prime$ be its image in $\mathbb{Z}^2 \rtimes G$.
By construction, $G$ acts effectively on the elliptic curve $F$, so
$\mathbb{Z}^2 \rtimes G$ acts effectively on $\mathbb{C}$.
Observe that if $ g^\prime\notin \mathbb{Z}^2$, then the action of $g$ on $\mathbb{C}$ admits a fixed point.
Indeed, $g^\prime$ can be written as a composition $g^\prime = r \circ t$, where $t\in \mathbb{Z}^2$ is a translation and $r$ is given by multiplication of some $\alpha \in \mathbb{C}$ by \cite[Proposition IV.4.18]{Har77}.
Since $g^\prime\notin \mathbb{Z}^2$, $r_0 \neq 1$, so $g^\prime - id_{\mathbb{C}}$ is bijective, and in particular then the action of $g$ on $\mathbb{C}$ admits a fixed point.

Then let $H$ be the normal subgroup of $\mathbb{Z}^2 \rtimes \widetilde G$ generated by elements with fixed points.
Recall that $Y$ is simply connected since it is rationally connected, see \autoref{prop:rc}, while $\widetilde Y$ is simply connected since it is the universal cover of $\widehat Y$.
Then, by \cite{Arm68},
the normal subgroup of $\widetilde G$ generated by elements with fixed points for the action on $\widetilde Y$ is $\widetilde G$. Since the action of $\widetilde G$ on $F$ always admits a fixed point, we have $\widetilde G\subset H$.

Finally, let $g$ be any nontrivial element in $\widetilde G$ which admits a fixed point for the action on $\widetilde Y$, and let $g^\prime$ be its image in
$\mathbb Z ^2 \rtimes G \subset {\rm Aut}(\mathbb{C})$.
Since $\Gamma$ acts on $\widetilde Y$ without fixed points, we have $g \not \in \Gamma$.
Thus, we have $g^\prime \notin \mathbb{Z}^2$ and, by the first part of the proof, $g^\prime \cdot x$ admits a fixed point for the action on $\mathbb C$ for every $x \in \mathbb Z^2$.
In turn, the action of $g \cdot x$ on $\widetilde Y \times \mathbb{C}$ admits a fixed point for any $x\in \mathbb{Z}^2$.
In particular, $g \cdot x\in H$ for any $x\in \mathbb{Z}^2$.
So we conclude that $H =\mathbb Z ^2 \rtimes \widetilde G$.
\end{proof}

\subsection{Boudnedness of the bases of elliptic Calabi--Yau 4-folds}

In this subsection, we prove the boundedness of the bases of elliptic Calabi--Yau 4-folds that are not crepant birational to a $K$-trivial orbibundle.

\begin{theorem}\label{log boundedness Iitaka dim > 0}
Fix $n,d \in \mathbb{N}_{>0}$, $d\leq 4$.
Let $V$ be the set of pairs $(X,\Delta)$ satisfying the conditions \autoref{condition:a}-\autoref{condition:b}-\autoref{condition:c} or \autoref{condition:a}-\autoref{condition:b}-\autoref{condition:c'} below:

\begin{enumerate}
    \item\label{condition:a}  $X$ is rationally connected of dimension $d$ and $\Delta$ is a $\mathbb{Q}$-divisor;

    \item\label{condition:b} $(X,\Delta)$ is klt and $n(K_X+\Delta)$ is integral and linearly equivalent to $0$;

    \item\label{condition:c} $\kappa(X',-K_{X'}) \geq d-2$, where $X' \to X$ is a small $\mathbb Q$-factorialization.

    \item\label{condition:c'} $d =3$, $\kappa(X',-K_{X'}) = 0$, where $X' \to X$ is a small $\mathbb Q$-factorialization, and $\tilde{q}(X) > 0$, where $\tilde{q}(X)$ denotes the augmented irregularity of $X$.

\end{enumerate}
Then, $V$ is log bounded. 
\end{theorem}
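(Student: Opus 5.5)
The strategy follows the philosophy of \cite{FHS21}. We first show that $V$ is log bounded in codimension 1. Then we use a relative Morrison--Kawamata cone conjecture over the bounding family to pass from boundedness in codimension 1 to actual log boundedness.

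\emph{Step 1: log boundedness in codimension 1.} We may replace $X$ with a small $\mathbb Q$-factorialization $X'$; this does not affect $\kappa(-K_{X'})$. Since $(X,\Delta)$ is klt, $n(K_X+\Delta)\sim 0$, and $X$ is rationally connected, the pair $(X,\Delta)$ is a klt log Calabi--Yau pair of bounded index with $\Delta$ having coefficients in $\frac1n\mathbb Z$. The argument splits into cases.
\begin{itemize}
\item If $\Delta\neq 0$, or $X$ is not canonical, we expect to run a $(K_X+(1+\epsilon)\Delta)$-type MMP or an MMP on $-K_X$. Using $\kappa(-K_{X'})\ge d-2$, this produces either a Mori fiber space or a fibration $X\dashrightarrow Z$ given by the Iitaka fibration of $-K_{X'}$, with $\dim Z\ge d-2$ and fibers of dimension at most $2$.
\item In condition \autoref{condition:c}, the klt log Calabi--Yau fibers of dimension $\le 2$ are bounded, by \cite{Ale94} and the boundedness of surfaces. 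The base $Z$ carries a generalized pair from the canonical bundle formula, with big moduli-plus-boundary part. One then invokes the boundedness of rationally connected log Calabi--Yau pairs fibered in low dimension, via BAB-type results of Birkar together with \cite{EFGMS}. The conclusion should be that $V$ is log birationally bounded, indeed bounded in codimension 1, after passing to minimal models, since all elements are crepant to one another within a family.
\item In condition \autoref{condition:c'}, $X$ is a klt $K$-trivial-type 3-fold with $\tilde q(X)>0$. Passing to a quasi-\'etale cover with $q>0$, the Albanese splitting used in \autoref{lemma:betti_number_obibundle} shows that $X=(F\times B)/G$ with bounded pieces and bounded $|G|$ (the index is bounded by $n$). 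This gives bounded-in-codimension-1 families directly.
\end{itemize}

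\emph{Step 2: from codimension 1 to full boundedness.} Let $(\mathcal X,\mathcal B)\to T$ be a bounding family. By \autoref{bb_in_family}, after stratifying $T$ and taking finite covers, the rational connectedness and augmented irregularity of the fibers are constant on strata. We take a $\mathbb Q$-factorial terminalization in family, and use \autoref{lemma commutativity of specialization} to compare the Iitaka fibrations of $-K$ on fibers and in family. Since $X$ is rationally connected, $H^1=H^2=0$ for the structure sheaf, so \autoref{deformation lemma} applies. It lets us compare nef, movable and effective cones of special fibers with those of the generic fiber. Every member of $V$ is then a model of a fiber obtained by a sequence of flops together with a contraction (undoing the terminalization, and Iitaka-type contractions). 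It therefore suffices to know the relative weak movable and nef cone conjectures for $(\mathcal X,\mathcal B)/T$. These give finitely many marked SQMs and contractions up to relative pseudo-automorphisms, and hence finitely many families.

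\emph{Main obstacle.} The hard step is establishing the relative cone conjecture over $T$. For this we would first invoke the absolute results of \cite{FXu24,FXu25}. These give the cone conjecture for klt log Calabi--Yau 3-folds (and the relevant 4-fold cases) when $\kappa(-K)\ge d-2$, or when $d=3$ and $\tilde q>0$. We would then apply Li's criterion \cite{Li23}, which promotes the generic-fiber cone conjecture to the relative one. What must be checked is that Li's hypotheses hold uniformly on each stratum. These hypotheses are the vanishing of $h^1,h^2$ of the structure sheaf and the constancy of the relevant invariants, which we obtain from \autoref{bb_in_family} and \autoref{deformation lemma}. We must also check that the Iitaka data specialize correctly, via \autoref{lemma commutativity of specialization}. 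Noetherian induction on $T$ then finishes the argument.
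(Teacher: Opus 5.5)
Your architecture matches the paper's. It runs as follows: boundedness in codimension 1; terminal models in a bounded family; \autoref{deformation lemma} via $h^1=h^2=0$; the relative movable cone conjecture from \cite{FXu24,FXu25} together with \autoref{bb_in_family}; then finitely many small modifications and contractions.

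Your Step 1, however, fails as written, and the paper does not prove it by hand. It cites \cite[Theorem 1.5]{BDCS24} for log boundedness in codimension 1, and \cite[Theorem 1.3]{Bir22} with \cite[Proposition 2.10]{BDCS24} for the terminal models. Your substitute argument contains false claims:
\begin{enumerate}
\item Klt log Calabi--Yau surfaces of bounded index are not a bounded class, since they include all K3 surfaces with zero boundary. Such fibers do occur here. A smooth divisor $X$ of bidegree $(1,4)$ in $\mathbb P^1\times\mathbb P^3$, with $\Delta=\frac12(F_1+F_2)$ for two smooth fibers, lies in $V$, and the Iitaka fibration of $-K_X$ is a pencil of K3 surfaces. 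So \cite{Ale94} gives nothing.
\item The remark that ``all elements are crepant to one another within a family'' does not upgrade birational boundedness to boundedness in codimension 1.
\item In case \autoref{condition:c'}, $X$ need not be $K$-trivial, since $\Delta$ can be a nonzero rigid divisor. Moreover, the bound $n$ on the index does not control the degree of a quasi-\'etale cover with $q>0$, the translation group in the splitting of \cite{Xu20}, or the surface factor $F$.
\end{enumerate}

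The more substantive gap is in Step 2, where you assert that ``it therefore suffices to know the relative weak cone conjectures.'' That reduction is what most of the paper's proof is devoted to. The relative cone conjecture yields finitely many models of the total space over the base. But the small modification of a single special fiber $\mathcal X_y$ that leads to $X$ need not a priori be the restriction of any of them, and a small modification of the total space can restrict to $\mathcal X_y$ as a map that contracts or extracts divisors. The paper bridges this in four steps.
\begin{enumerate}
\item Take $A$ ample on $X$. Using $\bar{\mathcal M}(\mathcal X/T)\supset\bar{\mathcal M}(\mathcal X_y)$, deform the movable class $f^*A$ to a big movable $\mathcal D$ on the total space. In the non-terminal case $f^*A$ may lie on the boundary of the movable cone. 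Bigness then comes from nefness on the generic fiber and constancy of $(f^*A)^d$, and smallness of the subsequent MMP from $\mathcal D$ being a limit of interior movable classes.
\item Run a $(K+\mathcal B+\epsilon\mathcal D)$-MMP over the base.
\item Invoke \cite[Lemma 3.1]{HMX18} to see that every step restricts to an $f^*A$-negative, hence small, birational map of the fiber. Verifying the dlt hypothesis of that lemma is exactly where local stability and a fiberwise log resolution enter.
\item Only then apply \autoref{lemma commutativity of specialization} to identify the fiber of the ample model with $(X,B)$, boundary included. It is applied to the relatively nef and big $\mathcal D'$, not to $-K$ as in your sketch.
\end{enumerate}
Without this argument, your finiteness statement for the family is never connected to the individual members of $V$.
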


\begin{proof}
By \cite[Theorem 1.5]{BDCS24}, such pairs are log bounded in codimension $1$, i.e., we have a set $W$ of varieties such that for each $(X,B) \in V$, there exists some $(X^\prime,B^\prime) \in W$, such that $(X,B)$ and $(X^\prime,B^\prime)$ are isomorphic in codimension $1$, and $W$ is log bounded.
By \cite[Theorem 1.3]{Bir22}, the set $W^\prime$ of $\mathbb{Q}$-factorial terminal models of pairs in $W$ still form a log bounded family.
Observe that we use the notion of boundedness in a stronger sense than in \cite{Bir22}, which does not assume the control on the coefficients of the boundary;
nevertheless, we can claim boundedness in the sense of \autoref{def.bounded.set.pairs} by \cite[Proposition 2.10]{BDCS24}.

By definition, we have a projective morphism $h \colon (\mathcal{X},\mathcal{B}) \to \mathcal{Y}$, such that for each $(X,B)\in W^\prime$, there exists some $y\in \mathcal{Y}$, with $(X,B) \simeq (\mathcal{X}_y,\mathcal{B}_y)$. 
By taking a stratification of the base $\mathcal{Y}$, we may further assume $\mathcal{Y}$ is smooth and $h$ is flat.
By discarding some open subsets, we may assume the points $y\in \mathcal{Y}$ such that $(\mathcal{X}_y, \mathcal{B}_y) \in W^\prime$ are dense in $\mathcal{Y}$. 
For each $y\in \mathcal{Y}$ such that $(\mathcal{X}_y, \mathcal{B}_y) \in W^\prime$, there exists an \'etale neighborhood such that the results of \autoref{deformation lemma} hold;
notice that the required vanishing in cohomology holds since the pairs under consideration are rationally connected.
By \cite[Theorem~6.6]{LZ22}, \cite[Corollary~2]{FXu24} or \cite[Theorem~1.2]{FXu25} together with \autoref{bb_in_family}, after a further \'etale base change and noetherian stratification, we may assume the movable relative cone conjecture holds in this \'etale neighborhood. 
By noetherianity, we may consider only finitely many of these \'etale neighborhoods.

Thus, by passing to a stratification and an \'etale base change of the parameter space $\mathcal Y$, we may assume $h$ is a finite union of $h_i \colon (\mathcal{X}_i,\mathcal{B}_i) \to \mathcal{Y}_i$, and each $h_i$ is a terminal $\mathbb{Q}$-factorial log Calabi--Yau fiber space such that the relative cone conjecture holds. 
Furthermore, up to a stratification, we may assume that each $(\mathcal{X}_i,\mathcal{B}_i)$ admits a log resolution that is relatively log smooth over $\mathcal{Y}_i$; in particular, this log resolution restricts to a log resolution fiberwise.
Since each $\mathcal{X}_i$ is $\mathbb Q$-factorial and $h_i$ is projective, by \cite[Theorem-Definition 4.3]{kollar_moduli}, each $h_i$ is a well-defined family of pairs in the sense of {\it op. cit.}.
Then, by the existence of the fiberwise log resolution, by \cite[Theorem 4.54]{kollar_moduli}, each $h_i \colon (\mathcal{X}_i,\mathcal{B}_i) \to \mathcal{Y}_i$ is a locally stable family of pairs; see \cite[Theorem-Definition 4.7]{kollar_moduli}.

Now for each $h_i$, there are only finitely many small $\mathbb{Q}$-factorial modifications, and there are only finitely many birational contractions starting from them. 
Let $\bar{h}$ be the disjoint union of all such small $\mathbb{Q}$-factorial modifications and birational contractions starting from them. 
We claim that every pair $(X,B) \in V$ appears in the family $\bar{h}$. 

In order to better exploit \autoref{deformation lemma}, we distinguish two cases.

{\bf Case 1:} In this step, we assume that $(X,B)$ is terminal $\mathbb Q$-factorial.

By assumption, $(X,B)$ is isomorphic in codimension 1 to some $(X^\prime,B^\prime) \in W^\prime$, say $f: (X^\prime,B^\prime) \dashrightarrow (X,B)$.
Then $(X^\prime,B^\prime)$ is a fiber of $h_i \colon (\mathcal{X}_i,\mathcal{B}_i) \to \mathcal{Y}_i$, say $(X^\prime,B^\prime) = (\mathcal{X}_{i,y},\mathcal{B}_{i,y})$. 
Pick an ample divisor $A$ on $X$, and consider the pullback $f^*(A)$ on $X^\prime$.
Since $X$ and $X^\prime$ are isomorphic in codimension 1 and are both $\mathbb Q$-factorial, $f^*(A)$ is movable.
Furthermore, since $A$ is ample, it is in the interior of the movable cone of $X$, and so is $f^*(A)$ in the interior of $\mathcal{M}(X^\prime)$.
By \autoref{deformation lemma} and the full-dimensionalty of the cones involved, $f^*(A)$ deforms to a big and movable divisor $\mathcal{D}$ on $\mathcal{X}_i$.

We run an MMP with scaling on $(\mathcal{X}_i,\mathcal{B}_i+\epsilon \mathcal{D})$ over $\mathcal{Y}_i$, where $0 < \epsilon \ll 1$.
Since $\mathcal{D}$ is big and movable, this MMP consists of log flips and terminates with a good minimal model $(\mathcal{X}_i^\prime,\mathcal{B}_i^\prime+\epsilon \mathcal{D}^\prime)$ over $\mathcal{Y}_i$ by \cite{BCHM10}. 

Now, we argue that we can apply \cite[Lemma 3.1]{HMX18}.
By assumption, $(\mathcal{X}_{i,y},\mathcal{B}_{i,y})$ is terminal, so condition (3) of {\it op. cit.} is satisfied.
Similarly, $(\mathcal{X}_{i},\mathcal{B}_i)$ is terminal and $h_i$ is equidimensional, thus condition (2) is satisfied.
Lastly, since $h_i$ is a locally stable family, by \cite[Theorem 4.54]{kollar_moduli}, we may choose general hyperplanes $H_1, \ldots, H_{\dim \mathcal{Y}_i}$ through $y \in \mathcal{Y}$ such that $(\mathcal{X}_{i},\mathcal{B}_i+\sum P_j)$ is log canonical, where $P_i$ denotes the pullback of $H_j$ to $\mathcal{X}_i$.
Furthermore, by construction, $h_i$ admits a fiberwise log resolution.
Thus, there exists a closed subset $\mathcal{Z}_i$ of codimension at least 2 and with no vertical component over $\mathcal{Y}_i$ such that $(\mathcal{U}_i,\Supp(\mathcal{B}_i|_{\mathcal{U}_i})) \to \mathcal{Y}_i$ is a log smooth family.
In particular, the pair $(\mathcal{X}_{i},\mathcal{B}_i+\sum P_j)$ is log smooth at the generic point of each stratum of $\sum P_j$.
To conclude that $(\mathcal{X}_{i},\mathcal{B}_i+\sum P_j)$ is dlt, we then only have to show that the strata of $\sum P_j$ are the only log canonical centers of $(\mathcal{X}_{i},\mathcal{B}_i+\sum P_j)$.
Assume by contradiction that there exists another log canonical center $C$.
To apply \cite[Lemma 3.1]{HMX18}, we are only concerned with our reference point $y \in \mathcal{Y}_i$.
So, we may assume $C \cap \mathcal{X}_{i,y} \neq \emptyset$.
Then, by \cite[Corollary 4.41]{Kol13}, $\mathcal{X}_{i,y}$ properly contains a log canonical center of $(\mathcal{X}_{i},\mathcal{B}_i+\sum P_j)$.
This is impossible, since $(\mathcal{X}_{i,y},\mathcal{B}_{i,y})$ is terminal and it is the pair obtained by adjunction from $(\mathcal{X}_{i},\mathcal{B}_i+\sum P_j)$ by \cite[Theorem 4.54]{kollar_moduli}.
Thus, condition (1) in {\it op. cit.} is satisfied.

By \cite[Lemma 3.1]{HMX18}, each step of this MMP restricts to a $f^*(A)$-negative birational contraction of the fiber over $y$. 
Since $f^*(A)$ is movable, and $(\mathcal{X}_{i,y},\mathcal{B}_{i,y}+\epsilon f^*(A))$ is terminal as $\epsilon$ is sufficiently small, the $f^*(A)$-negative birational contractions must be small.
So $(\mathcal{X}_{i,y}^\prime,\mathcal{B}_{i,y}^\prime+\epsilon \mathcal{D}_y^\prime)$ is isomorphic in codimension $1$ to $(X^\prime,B^\prime+\epsilon f^*(A))$, and $\mathcal{D}_y^\prime$ is nef and big.
Moreover, the image of the Iitaka fibration of $\mathcal{X}_{i,y}^\prime$ induced by $\mathcal{D}_y^\prime$ is precisely $X$.
Consider the Iitaka fibration of $\mathcal{X}_i^\prime/\mathcal{Y}_i$ induced by $\mathcal{D}^\prime$, say $g \colon  \mathcal{X}_i^\prime/\mathcal{Y}_i 
\to \mathcal{X}_i^{\prime,amp}/\mathcal{Y}_i $, and $g^*\mathcal{D}^{\prime,amp} = \mathcal{D}^\prime$. 
By \autoref{lemma commutativity of specialization},  $((\mathcal{X}_i^{\prime,amp})_y,(g_*\mathcal{B}^\prime_i)_y) \simeq (X,B)$.
\footnote{We observe that, under the assumptions of Case 1, $(\mathcal{X}_{i,y}^\prime,\mathcal{B}_{i,y}^\prime+\epsilon \mathcal{D}_y^\prime)$ is actually isomorphic to $((\mathcal{X}_i^{\prime,amp})_y,(g_*\mathcal{B}^\prime_i)_y)$, so taking the ample model is not necessary.
On the other hand, this will no longer be true in Case 2, and that is why we include this step in the proof.}

So every pair $(X,B) \in V$ that is also terminal and $\mathbb Q$-factorial appears in $\bar{h}$.

{\bf Case 2:} In this step, we treat the general case assuming that Case 1 has been settled.
The only difference is that we cannot invoke \autoref{deformation lemma} immediately, while the subsequent part of the proof remains identical and is therefore omitted.

By Case 1, we may fix a $\mathbb Q$-factorial terminalization $(X^\prime,B^\prime)$ of $(X,B)$,
and this terminalization belongs to a bounded family.
Suppose $(X^\prime,B^\prime)$ is a fiber of $h_i \colon (\mathcal{X}_i,\mathcal{B}_i) \to \mathcal{Y}_i$, say $(X^\prime,B^\prime) = (\mathcal{X}_{i,y},\mathcal{B}_{i,y})$. 
Pick an ample divisor $A$ on $X$, and consider the pullback $f^*(A)$ on $X^\prime$.
In general, $f^*(A)$ may lie in the boundary of $\bar{\mathcal{M}}(X^\prime)$;
this is for instance the case if
the morphism $X^\prime \to X$ is not small.
By \autoref{deformation lemma}, $f^*(A)$ deforms to a movable divisor $\mathcal{D}$ on $\mathcal{X}_i$.
Unlike in the previous case, we cannot invoke \autoref{deformation lemma} to conclude that $\mathcal{D}$ is also big.
Yet, since $X^\prime \to X$ is a morphism and $A$ is ample, $\mathcal{D}$ restricts to a nef divisor to the generic fiber of the family.
Then, by the deformation invariance of the self-intersection of $f^*(A)$, we conclude that said restriction to the generic fiber is also big.
Thus, we deduce that $f^*A$ deforms to a big and movable divisor $\mathcal{D}$ on $\mathcal{X}_i$.
Furthermore, while $\mathcal{D}$ is a priori only in the closure of $\bar{\mathcal{M}}(\mathcal{X}_i/\mathcal{Y}_i)$ of $\mathcal{M}(\mathcal{X}_i/\mathcal{Y}_i)$, it is nevertheless limit of divisors in $\mathcal{M}(\mathcal{X}_i/\mathcal{Y}_i)$; in particular, any MMP for $\mathcal{D}$ is also an MMP for a divisor in $\mathcal{M}(\mathcal{X}_i/\mathcal{Y}_i)$ and thus it only consists of small maps.
Once this is settled, the proof continues as in Case 1, and we may conclude.
\end{proof}

\begin{corollary}\label{cor_bdd_bases}
Let $V$ be the set of varieties $X$ such that 
\begin{enumerate}
    \item  there exists an elliptic fibration $Y \to X$ where $Y$ is a Calabi--Yau 4-fold; and

    \item $\kappa(X^\prime,-K_{X^\prime}) \geq 1$ or $\tilde q (X^\prime)>0$, where $X^\prime$ is a small $\mathbb{Q}$-factorization of $X$.
\end{enumerate}
Then $V$ is bounded. 
\end{corollary}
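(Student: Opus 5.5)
The plan is to reduce the corollary to \autoref{log boundedness Iitaka dim > 0} applied with $d=3$. Given $X \in V$ with elliptic fibration $\pi\colon Y \to X$, where $Y$ is a Calabi--Yau 4-fold, we apply the canonical bundle formula (\autoref{rmk:cbf}). This produces a generalized pair $(X,B_X,\mathbf M)$ with $K_X+B_X+\mathbf M_X\sim_{\mathbb Q}0$.

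Since the moduli part of an elliptic fibration is controlled by the $j$-map (Kodaira--Fujita, $12\mathbf M$ is b-Cartier and b-semiample), we can replace $\mathbf M_X$ by a general member $M_X\in|\frac{1}{m}\,m\mathbf M_X|$ for a fixed $m$ (e.g.\ $m=12$ or a fixed multiple). This yields a klt pair $(X,\Delta)$ with $\Delta=B_X+M_X$. The coefficients of $B_X$ lie in a fixed finite set, by Kodaira's classification of singular fibers in codimension one and the fact that $K_Y\sim 0$ with $Y$ canonical. We will need $n(K_X+\Delta)\sim 0$ for a uniform $n$. This follows from $K_Y\sim 0$ and $K_Y\sim\pi^*(K_X+B_X+\mathbf M_X)$ together with the uniform b-Cartier index of $\mathbf M$ (which is $12$ in relative dimension one) and the bounded denominators of $B_X$. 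This verifies conditions \autoref{condition:b} with a fixed $n$.

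Condition \autoref{condition:a} holds by \autoref{prop:rc}: $X$ is rationally connected of dimension $3$. For the remaining condition, pass to a small $\mathbb Q$-factorialization $X'$; note that log boundedness of $(X',\Delta')$ implies boundedness of $X$ after taking ample models, and the argument of \autoref{log boundedness Iitaka dim > 0} already works up to small modifications. If $\kappa(X',-K_{X'})\ge 1=d-2$, condition \autoref{condition:c} holds. Otherwise $\kappa(-K_{X'})=0$ (it is nonnegative since $-K_{X'}\sim_{\mathbb Q}\Delta'\ge0$), and the hypothesis gives $\tilde q(X')>0$. Since $\tilde q$ is invariant under small modifications (quasi-\'etale covers correspond via $\pi_1$ of the regular locus off codimension two), this gives condition \autoref{condition:c'}. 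Hence $(X',\Delta')$, and thus $(X,\Delta)$, belongs to a log bounded family, and in particular $X$ is bounded.

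The main obstacle is the uniformity in condition \autoref{condition:b}: we must check that the choice of $M_X$ keeps $(X,\Delta)$ klt, and that a single $n$ works. For klt-ness, $Y$ is klt and the generalized pair is generalized klt, so a general member of a basepoint-free system on a resolution preserves klt by Bertini. For the index, I would use the explicit formula $12\mathbf M=j^*\mathcal O(1)$ and the finite list of coefficients $1-\frac{1}{m}$ with $m\in\{1,2,3,4,6\}$ or $\frac{1}{12}$-multiples from Kodaira fibers. If necessary, I would fall back on \autoref{intro:thm_cartier_index} in relative dimension one to get the uniform Cartier index.
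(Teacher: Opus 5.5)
Your proposal is correct in outline, and it agrees with the paper on conditions \autoref{condition:a}, \autoref{condition:c} and \autoref{condition:c'}, which follow from \autoref{prop:rc} and the hypotheses. It differs on condition \autoref{condition:b}. The paper quotes \cite[Theorem~5.4]{BDCS24} for a universal $n$ and a klt $\Delta$ with $n(K_X+\Delta)\sim 0$. You instead build $\Delta=B_X+\frac{1}{12}\phi_*G$ from the elliptic canonical bundle formula. Here $G$ must be a general member of $|12\mathbf{M}_{X''}|$ on a model $\phi\colon X''\to X$ where $12\mathbf{M}$ descends and is free, not a general member of a system on $X$. Your route gives an explicit $n=12$ and a self-contained argument, but two steps you only gesture at need to be written out.

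\textbf{Finite coefficient set.} Kodaira's list alone does not give one, because multiple fibres ${}_mI_b$ contribute $1-\frac1m$ for every $m$. What excludes them is that $\omega_Y$ is a trivial line bundle. Cut by general hyperplanes of $X$ and take the crepant minimal resolution. Near a multiple fibre one has $\omega|_U\cong\mathcal O_U((m-1)F)$, which is nontrivial since $\mathcal O_F(F)$ has order exactly $m$; this contradicts the triviality of $\omega$ there.

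\textbf{Integral linear equivalence on $X$.} Observe that a rational function on $Y$ with vertical divisor is pulled back from $X$, because $\pi_*\mathcal O_Y=\mathcal O_X$. Hence $K_Y\sim 0$ makes $K_X+B_X+\mathbf M_X$ principal on $X$. Together with integrality of $12\mathbf M$ in this normalization, this gives $12(K_X+\Delta)\sim 0$. It also shows $12B_X$ is integral independently of the fibre analysis.

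\textbf{Minor points.} \autoref{intro:thm_cartier_index} concerns relative dimension three and plays no role here. The detour through $(X',\Delta')$ is unnecessary: \autoref{condition:c} and \autoref{condition:c'} are already phrased on a small $\mathbb Q$-factorialization, so \autoref{log boundedness Iitaka dim > 0} applies to $(X,\Delta)$ directly. Your claim that boundedness of $X$ follows from that of $(X',\Delta')$ ``by taking ample models'' would otherwise need its own proof.
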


\begin{proof}
By \autoref{prop:rc}, $X$ is rationally connected.
By \cite[Theorem 5.4]{BDCS24}, there exists a universal constant $n$ such that there exists an effective divisor $\Delta > 0$ on $X$, and $(X, \Delta)$ is a klt
with $n(K_X + \Delta) \sim 0$. 
Then, $V$ is bounded by \autoref{log boundedness Iitaka dim > 0}. 
\end{proof}

\section{Proof of the main statements}

In this section, we collect the proof of the main results.

\subsection{Boundedness results}

\begin{theorem}\label{main_thm_technical}
Fix $n \in \mathbb{N}_{>0}$.
Let $V$ be the set of varieties $X$ such that 
\begin{enumerate}
    \item $X$ is an elliptic Calabi--Yau variety of dimension $4$;
    \item \label{main_thm_technical_part_b} $f \colon X \to Y$ is not crepant birational to an orbibundle whose base has vanishing augmented irregularity.
\end{enumerate}
Then, $V$ is bounded.
\end{theorem}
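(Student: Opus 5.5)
The plan is to reduce boundedness of $X$ to boundedness of the bases together with the relative cone conjecture for elliptic fibrations, following \cite[Theorem~6.18]{FHS21}. Let $f \colon X \to Y$ be the elliptic fibration. First I will pass to a small $\mathbb Q$-factorialization $Y'$ of $Y$, replacing $X$ by a crepant model isomorphic in codimension 1 that maps to $Y'$ via \cite[Proposition 2.9]{Fil24}. This allows a case split on $\kappa(-K_{Y'})$.

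If $\kappa(-K_{Y'}) = 0$, then \autoref{lemma:orbibundle_structure} shows that $X$ is crepant birational to an orbibundle $(\widehat Y \times F)/G \to \widehat Y/G$. Hypothesis \autoref{main_thm_technical_part_b} then forces $\tilde q(Y) > 0$. If instead $\kappa(-K_{Y'}) \geq 1$, no extra condition is needed. In either case \autoref{cor_bdd_bases} applies. By \autoref{prop:rc} and \cite[Theorem 5.4]{BDCS24}, there is a uniform $n$ and boundary $\Delta$ with $(Y,\Delta)$ klt and $n(K_Y+\Delta)\sim 0$. Then \autoref{log boundedness Iitaka dim > 0}, under conditions (c) or (c'), yields that the pairs $(Y,\Delta)$, and in particular the generalized pairs $(Y,B_Y,\bM.)$ from the canonical bundle formula, are log bounded. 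Here I need the discriminant $B_Y$ to have coefficients in a fixed finite set (Kodaira's classification) and the moduli part to be controlled by $12\bM.$ via the $j$-map. These keep the bases in a log bounded family with fixed coefficients.

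Next, I will bound $X$ itself birationally and then upgrade. By \cite[Theorem~A]{EFGMS}, or \cite{BDCS24} with the uniform multisection degree, the 4-folds $X$ are birationally bounded, and even bounded in codimension 1 over the bounded family of bases. Concretely, I will construct a family $\mathcal X \to \mathcal Y \to T$ of elliptic fibrations over the log bounded family of bases. After stratifying and taking étale base changes, I will arrange that each $X$ is isomorphic in codimension 1 to a $\mathbb Q$-factorial terminal fiber $\mathcal X_t$ that fibers over $\mathcal Y_t$, compatibly with $f$.

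To recover $X$ up to isomorphism, I will run the argument of Cases 1 and 2 of \autoref{log boundedness Iitaka dim > 0} one level up. First I deform an ample divisor $A$ on $X$ via \autoref{deformation lemma}; the $h^1=h^2=0$ vanishings hold because $X$ is Calabi--Yau. Then I run a relative MMP over $T$ and use \cite[Lemma 3.1]{HMX18} together with \autoref{lemma commutativity of specialization}. The key input is that $X$ has only finitely many marked SQMs and contractions, up to $\mathrm{PsAut}$, relative to $T$. By \cite[Theorem~6.18]{FHS21}, the relative cone conjecture for $\mathcal X/T$ reduces to the relative movable cone conjecture for the base family $(\mathcal Y,\mathcal B)/T$. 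That is exactly what was established inside the proof of \autoref{log boundedness Iitaka dim > 0}, using \cite{FXu24,FXu25,LZ22} and \autoref{bb_in_family} to keep $\tilde q$ constant on strata. The main obstacle is this step: matching the hypotheses of \cite[Theorem~6.18]{FHS21} in families. This requires showing that the moduli part and discriminant specialize correctly, and that Mordell--Weil-type pseudo-automorphisms of the elliptic fibration act with a rational polyhedral fundamental domain uniformly over $T$. I would handle it by first stratifying so that the canonical bundle formula commutes with base change, via \autoref{lem:push_fwd}, and then applying Kawamata's argument fiberwise over the generic point of each stratum.
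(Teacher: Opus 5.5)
Your proposal follows essentially the paper's route. The bases are bounded via \autoref{cor_bdd_bases}, with \autoref{lemma:orbibundle_structure} converting hypothesis (b) into the condition $\kappa(-K_{Y'})\geq 1$ or $\tilde q>0$. The 4-folds are birationally bounded by \cite[Theorem~A]{EFGMS}, and boundedness then comes from \cite[Theorem~6.18]{FHS21}, whose terminal $\mathbb{Q}$-factorial hypothesis is removed exactly as in Case~2 of the proof of \autoref{log boundedness Iitaka dim > 0}.

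The extra difficulties you anticipate (specialization of the discriminant and moduli part, uniform control of Mordell--Weil-type pseudo-automorphisms) are already handled inside \cite[Theorem~6.18]{FHS21}. The finiteness of $\mathrm{coeff}(B_Y)$ is not needed for the argument. It also would not follow from Kodaira's classification alone, since multiple fibers contribute coefficients $1-\frac{1}{m}$; global ACC would be the right tool.
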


\begin{proof}
    It follows from \autoref{cor_bdd_bases}, \cite[Theorem~A]{EFGMS}, and \cite[Theorem 6.18]{FHS21}.
    We observe that \cite[Theorem~6.18]{FHS21} requires $X$ to be terminal and $\mathbb Q$-factorial;
    we can circumvent this technical assumption by arguing as in Case 2 of the proof of \autoref{log boundedness Iitaka dim > 0}.
\end{proof}

\begin{remark}\label{rmk_cy}
    By the Beauville--Bogomolov decomposition for klt $K$-trivial varieties and the classification of surfaces, a $K$-trivial 3-fold $Y$ has vanishing augmented irregularity if and only if it has a quasi-\'etale cover $Y' \to Y$ where $Y'$ is a Calabi--Yau 3-fold.
\end{remark}

\subsection{Moduli part results}

Recall that, by \autoref{lc-trivial.def}, when considering an lc-trivial fibration $f \colon (X,\Delta) \to Y$, we assume $\mathrm{coeff}(\Delta) \subset \mathbb Q$.

\begin{proposition}\label{prop_cartier_index_lc}
    Let $f \colon (X,\Delta) \to Y$ be an lc-trivial fibration such that
    \begin{itemize}
        \item $\dim X - \dim Y =3$;
        \item $\Delta$ is effective over the generic point of $Y$ and $\mathrm{coeff}(\Delta)\subset \mathbb Q$; and
        \item $(X,\Delta)$ is log canonical but not klt over the generic point of $Y$.
    \end{itemize}
    Let $(Y,B_Y,\mathbf M)$ denote the generalized pair induced on $Y$.
    Then,  there is a constant $I$, only depending on the horizontal multiplicities of $\Delta$, such that $I \cdot \mathbf M$ is b-Cartier. 
\end{proposition}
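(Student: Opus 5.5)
The plan is to reduce to lower relative dimension by adjunction to a non-klt centre of the generic fibre, where the effective b-Cartier property of the moduli part is already known. Here relative dimension at most 2 is available by \cite{Fuj03,PS09,Fil20,babwild,EFGMS}. This is the approach of Birkar \cite{Bir19}.

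First, replace $Y$ by a sufficiently high birational model where $\mathbf M$ descends and is nef. Pass to a dlt modification $(X',\Delta')$ of $(X,\Delta)$ over $Y$ and base change along a generically finite cover if necessary. This does not change $\mathbf M$ up to pullback, by the compatibility of the moduli part with base change and crepant models \cite{Amb04,FG14}. Since $(X,\Delta)$ is lc but not klt over the generic point of $Y$, some component $S$ of $\lfloor \Delta'\rfloor$ dominates $Y$.

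Take a minimal log canonical centre $W$ of $(X',\Delta')$ among those dominating $Y$. Using the Stein factorization $W \to Y' \to Y$, adjunction gives an lc-trivial fibration $(W,\Delta_W) \to Y'$ with $\dim W - \dim Y' \le 2$. By the standard comparison of moduli parts under adjunction to lc centres (Kawamata subadjunction, see \cite{FG14} and \cite[\S~3]{Amb04}), the moduli b-divisor of $(W,\Delta_W)\to Y'$ is the pullback of $\mathbf M$ under $Y' \to Y$. The coefficients of $\Delta_W$ lie in a hyperstandard set determined by the horizontal coefficients of $\Delta$, and since $W$ is a minimal centre, the generic fibre of $W \to Y'$ is klt.

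The main obstacle is controlling the degree of $Y' \to Y$ and the Galois twist. For this, the plan is to invoke \cite[Proposition 6.3]{Bir19}, or its argument via the bounded torsion of the generic fibre's canonical class (index results in dimension $\le 3$, \cite{YXu}), to bound the number of components and the degree of $Y'\to Y$ in terms of the horizontal coefficients. Given that bound, relative dimension $\le 2$ provides a constant $I_0$ with $I_0 \mathbf M_{Y'}$ b-Cartier. The degree bound then descends this to $I\mathbf M$ b-Cartier on $Y$, with $I$ depending only on $I_0$ and the degree bound, by taking norms, or by using that $\mathbf M$ is b-semiample so that multiplying by the degree clears denominators.

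If the centre $W$ has relative dimension $0$, then $\mathbf M \sim_{\mathbb Q}$ a torsion class, and bounded torsion again yields $I$.
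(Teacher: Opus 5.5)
\textbf{There is a genuine gap at the step you yourself flag as the main obstacle: nothing you cite bounds $\deg(Y'\to Y)$, and everything else depends on that bound.}

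\cite[Proposition~6.3]{Bir19} does not bound the degree of the Stein factorization of an lc centre. It concerns fibrations of Fano type over the base and gives the b-Cartier index directly, via bounded complements. Your $X$ need not be of Fano type over $Y$.

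Bounded index of $K_{X_\eta}+\Delta_\eta$ does not help either. It only controls how $\mathrm{Gal}(\overline{k(Y)}/k(Y))$ acts on the log volume form. It says nothing about the size of the Galois orbit of a minimal lc centre of $(X_{\bar\eta},\Delta_{\bar\eta})$, and that orbit size is exactly $\deg(Y'\to Y)$. The number of minimal lc centres of a $3$-dimensional log Calabi--Yau pair is unbounded: a toric $3$-fold with its toric boundary has one $0$-dimensional lc centre per maximal cone. So bounding orbit sizes, if it is possible at all, would need a separate argument about the Galois action on the dual complex, and you do not give one.

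Without such a bound the descent fails. From $I_0\tau^*\mathbf M$ b-Cartier, norms give only $I_0\deg(\tau)\,\mathbf M$ b-Cartier. Comparing coefficients over a divisor with ramification index $e$ gives denominators dividing $I_0e$. Both are unbounded, and b-semiampleness of $\mathbf M$ says nothing about integrality. The same objection applies to your preliminary ``base change along a generically finite cover if necessary'' (e.g.\ the cover in \cite{FG14}): any auxiliary cover must have bounded degree, or be avoided. Your relative-dimension-$0$ case has the same problem. That $\mathbf M$ is torsion is clear, but the order of the torsion is exactly what depends on the Galois action.

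The paper never base changes. It first passes to a $\mathbb Q$-factorial dlt model in which every component of $\Delta$ dominates $Y$. It then runs a $(K_X+\Delta^{<1})$-MMP over $Y$, which ends in a Mori fibre space $X'\to Z$ because $\Delta^{=1}$ dominates $Y$. There are two cases.
\begin{itemize}
\item If $Z\to Y$ is birational, $X'$ is of Fano type over $Y$ and \cite[Proposition~6.3]{Bir19} applies directly.
\item Otherwise $X'\to Z\to Y$ is a tower of fibrations of relative dimensions $1$ and $2$. The inductive argument of \cite[Theorem~7.6]{Fil20}, fed by the effective results in relative dimension~$2$ \cite{EFGMS,babwild}, then bounds the b-Cartier index.
\end{itemize}
To keep your adjunction approach, you would need an independent proof that some lc centre dominating $Y$ has a bounded number of Galois conjugates. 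That is not available from the results you invoke.
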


\begin{proof}
    Since the moduli part of the canonical bundle formula only depends on the crepant birational class of the generic fiber, by \cite{HX13} or \cite{Bir12}, we may first replace $(X_\eta,\Delta_\eta)$ with a $\mathbb Q$-factorial dlt modification and then choose a compactification of $(X_\eta,\Delta_\eta)$ over a projective model of $Y$ so that $(X,\Delta)$ is itself a $\mathbb Q$-factorial projective dlt pair and, after possibly replacing $Y$ with a projective compactification and a birational model, with $K_X + \Delta \sim_{\mathbb Q,Y} 0$.
    Furthermore, by this reduction, we may assume that every divisor in $\mathrm{Supp}(\Delta)$ dominates $Y$.
    In this process the only possible change in the coefficients of $\Delta$ is the introduction of components with coefficient 1;
    therefore, this reduction step does not alter the statement to prove.

    By the above reduction and the fact that $(X_\eta,\Delta_\eta)$ is not klt, $\Delta^{=1}$ dominates $Y$, where we write $\Delta=\Delta^{<1} + \Delta^{=1}$ and $\Delta^{=1}$ denotes the components with coefficient 1.
    Thus, we may run a $(K_X+\Delta^{<1})$-MMP with scaling over $Y$, which ends with a Mori fiber space $X' \rar Z$.
    If $Z$ is birational to $Y$, we conclude by \cite[Proposition~6.3]{Bir19}.
    Otherwise, we have a factorization $(X',\Delta') \rar Z \rar Y$, where $X' \to Z$ and $Z \to Y$ are fibrations of relative dimensions 1 or 2.

    By \cite[Theorem C]{EFGMS} and \cite[Theorem 1.4]{babwild}, the effective b-semiampleness conjecture \cite[Conjecture 7.13.3]{PS09} is known in relative dimension 2.
    Therefore, by \cite[Theorem 7.6]{Fil20} and its proof, effective b-semiampleness in relative dimension 3 holds true if the fibration factors as two fibrations of positive relative dimension.
    In particular, this grants the effectivity of the b-Cartier index of the moduli part.
    This concludes the proof.
\end{proof}

\begin{theorem}\label{thm_cartier_index}
    Let $f \colon (X,\Delta) \to Y$ be an lc-trivial fibration such that
    \begin{itemize}
        \item $\dim X - \dim Y =3$;
        \item $\Delta$ is effective over the generic point of $Y$ and $\mathrm{coeff}(\Delta)\subset \mathbb Q$; and
        \item $(X,\Delta)$ is 
        log canonical over the generic point of $Y$.
    \end{itemize}
    Let $(Y,B_Y,\mathbf M)$ denote the generalized pair induced on $Y$.
    Then,  there is a constant $I$, only depending on the horizontal multiplicities of $\Delta$, such that $I \cdot \mathbf M$ is b-Cartier. 
\end{theorem}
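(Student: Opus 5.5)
The plan is to split into two cases according to whether $(X,\Delta)$ is klt over the generic point of $Y$. If it is log canonical but not klt there, \autoref{prop_cartier_index_lc} already gives the conclusion. So the remaining task is the case where the generic fiber $(X_\eta,\Delta_\eta)$ is klt. As in the proof of \autoref{prop_cartier_index_lc}, we first reduce using the fact that $\mathbf M$ depends only on the crepant birational class of the generic fiber. We pass to a $\mathbb Q$-factorial model of the generic fiber and compactify over a birational projective model of $Y$. This lets us assume $(X,\Delta)$ is projective, $\mathbb Q$-factorial and klt over $\eta$, with $K_X+\Delta\sim_{\mathbb Q,Y}0$ and every component of $\Delta$ horizontal. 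The horizontal coefficients then lie in a fixed finite set determined by the statement.

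In the klt case, we follow Birkar's approach to the b-Cartier index of the moduli part, \cite[\S~6]{Bir19}. The key input there is a uniform bound on the index of the fibers: if $p(K_{X_\eta}+\Delta_\eta)\sim 0$ for a bounded $p$, then by passing to the associated index one cover and using \cite[Proposition~6.3]{Bir19} (or the argument proving it), the b-Cartier index of $\mathbf M$ is controlled in terms of $p$ and the coefficients.

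To obtain $p$, we use that the geometric generic fiber is a klt log Calabi--Yau 3-fold pair with coefficients in a fixed finite set. The index conjecture is settled in dimension 3 by \cite{YXu}, together with \cite{Jia21,JL21}. It gives a uniform $p$ with $p(K_{X_{\bar\eta}}+\Delta_{\bar\eta})\sim 0$. We will then descend this linear equivalence to the generic fiber over $k(Y)$, or over a finite extension, which is harmless after a generically finite base change since $\mathbf M$ is compatible with such base changes. After that, we take the index one cover $\widetilde X\to X$ over $Y$. Its generic fiber is a canonical (possibly slc after normalization issues) log Calabi--Yau with trivial log canonical class. The moduli part of $f$ is compared with the Hodge-theoretic Hodge bundle of the family $\widetilde X\to Y$, following \cite{FM00}, and this comparison yields that $p\,r\,\mathbf M$ is b-Cartier for an $r$ depending only on the coefficients.

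The main obstacle is exactly this comparison step. In the absence of a bound on middle Betti numbers, the argument of \cite[\S~3]{FM00} does not directly give a uniform integrality constant. Birkar's method instead uses adjunction to a non-klt center of a suitable perturbation, combined with boundedness of complements. The first thing I would try is to reproduce that route: produce a bounded $N$-complement $\Delta^+\geq \Delta$ on the generic fiber, with $N$ uniform by boundedness of complements in dimension 3, such that $(X_\eta,\Delta^+_\eta)$ is lc but not klt. Since $\Delta^+-\Delta$ is horizontal and semiample-like over $\eta$, the lc-trivial fibration $(X,\Delta^+)\to Y$ has a moduli part $\mathbf M^+$. This $\mathbf M^+$ has controlled Cartier index by \autoref{prop_cartier_index_lc}. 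We would then relate $\mathbf M$ to $\mathbf M^+$ via \cite[Lemma~6.x]{Bir19}, showing that their difference is governed by $N$ and the coefficients, so that $I\mathbf M$ is b-Cartier with $I$ depending only on $N$, $p$ and the horizontal multiplicities. The index bound from \cite{YXu} will be needed here to guarantee that $\mathbf M$ and $\mathbf M^+$ agree up to a torsion of bounded order.
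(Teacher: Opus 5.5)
There is a genuine gap, and it sits exactly at the step you flag as the main obstacle.

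First, a bound $p$ on the index of $K_{X_\eta}+\Delta_\eta$ does not by itself control the b-Cartier index of $\mathbf M$ through \cite[Proposition~6.3]{Bir19}. That argument needs $q(K_X+\Theta)\sim 0$ with $q$ bounded \emph{locally over every codimension-one point of the base}, where $\Theta$ contains the threshold multiples $t_yf^*y$ of the fibers. Birkar gets this from boundedness of relative complements, which is available because his fibrations are of Fano type. Knowing only the index of the generic fiber puts you back in the setting of \cite{FM00}, where one needs the middle Betti number.

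Second, your proposed remedy cannot work. Since $f$ is lc-trivial, $K_{X_\eta}+\Delta_\eta\sim_{\mathbb Q}0$. So any $\Delta^+\geq\Delta$ with $K_{X_\eta}+\Delta^+_\eta\sim_{\mathbb Q}0$ has $\Delta^+_\eta-\Delta_\eta$ effective and numerically trivial on the projective variety $X_\eta$, hence zero. Thus $(X_\eta,\Delta^+_\eta)=(X_\eta,\Delta_\eta)$ is still klt. An $N$-complement of the generic fiber therefore encodes nothing beyond the index, \autoref{prop_cartier_index_lc} never becomes applicable, and the unspecified lemma comparing $\mathbf M$ with $\mathbf M^+$ has nothing to compare.

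The missing idea is to create non-klt centres \emph{vertically}. The paper reduces to $\dim Y=1$ and to a terminal generic fiber with $\Delta_\eta=0$; the case $\Delta_\eta\neq 0$ goes through \autoref{thm:effective_freeness}. For each closed point $c\in Y$ it sets $t=\mathrm{lct}(X,0;f^*c)$. After a dlt modification and an MMP, the fiber over $c$ is supported on a single lc place $F$. Adjunction then gives a semi-log canonical log Calabi--Yau 3-fold pair with standard coefficients, to which the slc index theorem of \cite{YXu} applies, giving a uniform $m$.

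Birkar's lifting trick with $L=-mK_Z-mS-\lfloor(m+1)B\rfloor$ and Koll\'ar's torsion-freeness then yield a dichotomy:
\begin{enumerate}
\item If $R^1(f'\circ p)_*\mathcal O_Z(L-S)=0$, the section on $S$ lifts, so $m(K_X+tf^*c)\sim 0$ near every $c$. The argument of \cite[Proposition~6.3]{Bir19} then gives $mM_Y$ integral.
\item Otherwise, torsion-freeness makes this sheaf nonzero at the generic point, so the general fiber has positive irregularity. Then \autoref{lemma:betti_number_obibundle} bounds its Betti numbers, and \cite{FM00,Flo12} apply.
\end{enumerate}
Your proposal contains neither the vertical adjunction with the slc index bound nor the separate treatment of the irregular case, which is exactly where the lifting can fail.
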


\begin{proof}
    By \autoref{prop_cartier_index_lc}, we may assume $(X,\Delta)$ is klt over the generic point of $Y$.

    It is well known that the statement can be verified under the additional assumption $\dim Y = 1$;
    see, e.g., \cite[\S~3]{FM00} and \cite[Lemma 3.1 and Proposition 5.2]{Flo12}.
    Furthermore, by taking a terminalization of the generic fiber $(X_\eta,\Delta_\eta)$ and spreading it out to a model over $Y$, we may assume that $(X_\eta,\Delta_\eta)$ is terminal.
    If $\Delta_\eta \neq 0$, we defer the proof to \autoref{thm:effective_freeness}.
    Thus, in the rest of the proof, we assume that the generic--and hence a general--fiber is a terminal $K$-trivial 3-fold.
    Lastly, since the moduli part of the canonical bundle formula only depends on the generic fiber, by \cite{HX13} or \cite{Bir12}, we may choose a different compactification of $X_\eta$ over $Y$ so that $X$ is itself a terminal $\mathbb Q$-factorial pair with $K_X \sim_{\mathbb Q,Y} 0$.

    We have the canonical bundle formula
\begin{equation}\label{eq_cbf}
I \cdot K_X \sim I \cdot f^*( K_Y+B_Y+\bM Y.),
\end{equation}
where $I$ denotes the index of the generic fiber.
By the index conjecture for 3-folds, see \cite[Theorem 1.8.7]{YXu}, $I$ belongs to a finite set.

Let $c$ be a closed point in $Y$. Let $t = {\rm lct}(X,0;f^*c)$. 
Then $(X,\Delta = tf^*c)$ is log canonical but not klt. 
Let $(T,\Delta_T)\to (X,\Delta)$ be a dlt modification and $E$ a prime divisor with coefficient 1 in $\Delta_T$. 
Then we may run $(K_T+\Delta_T-E)$-MMP over $Y$, which ends with a minimal model $f^\prime \colon X^\prime\to Y$ by \cite[Theorem 1.1]{Bir12}.
By the negativity lemma, the support of $f^{\prime*}c$ is the strict transform of $E$ on $X^\prime$, say $F$. Moreover, $\Delta_{X^\prime} = F$. 

Pick a log resolution $p \colon (Z,\Delta_Z)\to (X^\prime,\Delta_{X^\prime})$, where $K_Z+\Delta_Z = p^*(K_{X^\prime}+\Delta_{X^\prime})$.
Up to replacing $Z$ by a higher resolution, we may assume that $p$ factors as a composition $(Z,\Delta_Z) \to (W,\Delta_W) \to (X^\prime,\Delta_{X^\prime})$, where $(W,\Delta_W) \to (X^\prime,\Delta_{X^\prime})$ is a dlt modification that  only extracts divisors with log discrepancy $0$. 
We may write $\Delta_Z = S + B $ and $\Delta_W = S_W $, where $S$ (resp. $S_W$) is a reduced divisor, and the coefficients of $B$ lie in $(-\infty,1)$.
Moreover, we may assume $Z\to W$ does not contract any component of $S$ by the existence of thrifty resolutions, see \cite[Corollary 1.36]{Kol13}.

By adjunction, we have $(K_W+S_W)|_{S_W} \sim_{\mathbb{Q}} K_{S_W}+B_{S_W}$
and ${\rm coeff}(B_{S_W}) \in \Phi$, where $\Phi$ is the standard set, see \cite[Corollary 16.7]{Cor92}.
By \cite[Remark 1.2(3)]{Fuj00}, $(S_W,B_{S_W})$ is semi log canonical. By \cite[Theorem 1.8.8]{YXu}, the index conjecture holds for semi log canonical 3-folds, so there exists some natural number $m$ such that $m(K_{S_W}+B_{S_W}) \sim 0$. 

In the following, we use constructions similar to the ones in \cite[Proposition 8.1 Step 3]{Bir19}. 
Let $L = -mK_Z - mS - \lfloor (m+1)B \rfloor$. 
Consider the exact sequence
\begin{equation}\label{eq:ses}
0 \to \mathcal{O}_Z(L-S) \to \mathcal{O}_Z(L) \to \mathcal{O}_S(L|_S) \to 0.
\end{equation}
Note that $L-S = K_Z - \lfloor (m+1)(K_Z+\Delta_Z)\rfloor \sim_{\mathbb Q, Y} K_Z+\{(m+1)(K_Z+\Delta_Z)\} = K_Z+\{(m+1)\Delta_Z\}$.

By Koll\'ar's torsion freeness theorem, see \cite[Theorem 1.1]{Fuj09},  
$R^1(f^\prime\circ p)_*\mathcal{O}_Z(L-S)$
is torsion-free. 

In the following, $c$ still denotes a closed point of $Y$, and we adopt the notation of the previous paragraph for the birational models of $X$ obtained by modifications over $c$.

We distinguish the following two cases.
We observe that the cases do not depend on the choice of $c$, since the sheaf under consideration is torsion free and its stalk at the generic point of $Y$ is independent of $c$.

\noindent \textbf{Case 1. } 
$R^1(f^\prime\circ p)_*\mathcal{O}_Z(L-S) = 0$.

In this case, when restricted to an open neighborhood $U$ of $c$, we have $H^1(Z_U,\mathcal{O}_{Z_U}(L-S)) = 0$. 
Moreover, we claim that $H^0(S,L|_S)$ contains a section that is nonzero on every irreducible component of $S$.
Indeed, we may write $(K_Z+S+B)|_{S} \sim_{\mathbb{Q}} K_{S}+B_{S}$,
then $K_{S}+B_S$ equals the pullback of $K_{S_W}+B_{S_W}$ by \cite[Lemma 17.2.3]{Kol92}. So $-m(K_S+B_S) \sim 0$.
Moreover, note that we have \begin{equation}\label{eq:eq_adjunction}
L+m(K_Z+S+B) = mB-\lfloor (m+1)B\rfloor .
\end{equation}
By assumption, the support of $S+B$ is simple normal crossing, so $mB$ is integral near $S$. 
So, since the coefficients of $B$ are in $(-\infty,1)$, we have $mB-\lfloor (m+1)B\rfloor \geq 0$ near $S$. 
Since $\text{Supp}(mB-\lfloor(m+1)B\rfloor)$ does not contain any component of $S$, by restricting \eqref{eq:eq_adjunction} to $S$, we conclude that $L|_S$ admits a section that is nonzero on each component of $S$. 

By the long exact sequence in cohomology associated to \eqref{eq:ses} and $f \circ p$, and by the assumption of Case 1, $L$ admits a section over $U$, say $L|_U \sim D\geq 0$. Since this section is nonzero when restricted to any component of $S$, $D$ does not contain any component of $S$, hence $p_*D$ does not contain $F$. 

Then $-m(K_{X^\prime}+F) = p_*L  \sim p_*D = D^\prime \geq 0$
over some neighborhood $U$ of $c$. 
In this case, we have $D^\prime$ is vertical since $K_{X^\prime}$ is $\mathbb{Q}$-linearly trivial over the base, and so we may assume $D^\prime = 0$ by shrinking $U$ as $D^\prime$ does not contain $F$. This means that $m(K_{X^\prime}+F) \sim 0$ on $U$. Going back to $X$, we have
\begin{equation}\label{eq:linear_eq_cbf}
m(K_X+tf^*c) \sim 0 \; \textrm{on}\; U
\end{equation}
with $t = {\rm lct}(X,0;f^*c)$.  

Then we may bound the index of the moduli part using the canonical bundle formula, following \cite[Proposition 6.3]{Bir19}. 

It is known that $mK_{X_\eta} \sim 0$ by the index conjecture in dimension $3$. So $mK_{X} \sim V$ for some vertical divisor $V$. Since $V \sim_{\mathbb{Q},Y} 0$,
$V = f^{*}V_Y$ for some $\mathbb{Q}$-divisor $V_Y$ on $Y$. We choose the moduli part as $M_Y \coloneqq \frac{1}{m} V_Y - K_Y - B_Y$; 
see also \cite[7.5 Construction]{PS09}.
Then we have $mK_{X} \sim V =  f^{*}V_Y = mf^{*}(K_Y+B_Y+M_Y)$.

Let
$\Theta = \sum_{y\in {\rm Supp}(f (V))\cup {\rm Supp}(B_Y)} t_yf^{*}y$,
and $\Theta_Y = B_Y+ \sum_{y\in {\rm Supp}(f (V))\cup {\rm Supp}(B_Y)} t_yy$, where $t_y$ is the log canonical threshold of $f^{*}y$ with respect to $(X,0)$.

By $mK_X \sim V$ and the fact that $\Theta$ is supported on $f^{-1}(f(V))$, $m(K_X + \Theta)$ is linearly trivial over each point of $Y \setminus (\mathrm{Supp}(f(V))\cup \mathrm{Supp}(B_Y))$.
Then, by \eqref{eq:linear_eq_cbf}, 
$m(K_{X}+\Theta)$ is also linearly trivial over each point of ${\rm Supp}(f (V))\cup \mathrm{Supp}(B_Y)$.
Thus, by \cite[Lemma 2.4]{Bir19}, we deduce that $m(K_{X}+\Theta) \sim_{Y} 0$. Therefore we have $m(K_{X}+\Theta) = mK_{X}+m\Theta \sim mf^{*}(K_Y+B_Y+M_Y) + mf^{*}(\Theta_Y-B_Y) = mf^{ *}(K_Y+\Theta_Y+M_Y)$. So $mM_Y$ is integral as $K_Y+\Theta_Y$ is integral by definition. 

\noindent \textbf{Case 2. }  
$R^1(f^\prime\circ p)_*\mathcal{O}_Z(L-S)\neq 0$.

Since $R^1(f^\prime\circ p)_*\mathcal{O}_Z(L-S)$ is torsion-free, it is nonzero everywhere on $Y$.
Thus, a general fiber of $f$ has positive irregularity. 
By \cite[Theorem 5.1]{Flo12}, the b-Cartier index of $\bM Y.$ only depends on the middle Betti number and the index of a general fiber of $f$. Since a general fiber of $f$ is a canonical $K$-trivial threefold with positive irregularity, by \autoref{lemma:betti_number_obibundle}, the b-Cartier index of $\bM Y.$ is bounded.
\end{proof}

\begin{theorem}\label{thm:effective_freeness}
        Let $f \colon (X,\Delta) \to Y$ be an lc-trivial fibration such that
    \begin{itemize}
        \item $\dim X - \dim Y =3$; and
        \item $(X,\Delta)$ is klt over the generic point of $Y$ and $\mathrm{coeff}(\Delta)\subset \mathbb Q$; and
        \item if the generic fiber $(X_\eta,\Delta_\eta)$ is a canonical variety with $\Delta_\eta = 0$, then $X_\eta$ has positive augmented irregularity.
    \end{itemize}
    Let $(Y,B_Y,\mathbf M)$ denote the generalized pair induced on $Y$.
    Then, there is a constant $I$, only depending on the horizontal multiplicities of $\Delta$,
    such that $I \cdot \mathbf M$ is b-free.
\end{theorem}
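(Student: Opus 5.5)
The plan is to reduce b-freeness to the known effective b-semiampleness in relative dimension at most 2, or to a bound on the Betti numbers of the general fiber. First, as in the proof of \autoref{thm_cartier_index}, we use that the moduli part depends only on the generic fiber. We replace $(X_\eta,\Delta_\eta)$ by a $\mathbb Q$-factorial terminalization, spread it out over $Y$, and, by \cite{HX13} or \cite{Bir12}, choose a compactification so that $X$ is $\mathbb Q$-factorial and $K_X+\Delta\sim_{\mathbb Q,Y}0$. After this reduction, the horizontal coefficients of $\Delta$ still lie in a finite set determined by the original ones. We then split into two cases: (i) $\Delta_\eta\neq 0$ or $X_\eta$ is not canonical; (ii) $\Delta_\eta=0$ and $X_\eta$ is canonical, in which case $\tilde q(X_\eta)>0$ by hypothesis.

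In case (i) the fiber is not a canonical $K$-trivial variety with empty boundary. If $\Delta_\eta\neq0$, we run a $K_X$-MMP over $Y$; after a terminalization we may instead use $K_X+\Delta-\epsilon\Delta^{h}$. Since $K_X\sim_{\mathbb Q,Y}-\Delta$ is not pseudo-effective over $Y$, this MMP ends with a Mori fiber space $X'\to Z$ over $Y$. If $Z\to Y$ is not birational, the lc-trivial fibration factors through two fibrations of relative dimension at most $2$. Effective b-semiampleness in relative dimension $\le 2$ holds by \cite[Theorem C]{EFGMS} and \cite[Theorem 1.4]{babwild}, so \cite[Theorem 7.6]{Fil20} gives b-freeness of $I\mathbf M$ with $I$ depending only on the coefficients. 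If $Z\to Y$ is birational, the fibration is a Fano-type (relative Mori fiber space of relative Picard number one) fibration with boundary of bounded coefficients. We would then invoke Birkar's results \cite[Proposition 6.3]{Bir19} and the boundedness of complements to produce a bounded $n$-complement. This yields the integrality of $n\mathbf M$, and freeness follows from the argument for generically rationally connected or Fano-type fibers, as in \cite{PS09} and \cite{Fil20}. If instead $\Delta_\eta=0$ but $X_\eta$ is klt and not canonical, we pass to the index-one cover of the generic fiber. This is a finite cover of bounded degree by \cite[Theorem 1.8.7]{YXu}, and it relates $\mathbf M$ to the moduli part of a canonical $K$-trivial family up to a bounded factor, reducing us to case (ii) or to a cover with positive irregularity.

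In case (ii), $X_\eta$ admits a quasi-étale cover of positive irregularity. Taking the Albanese as in \autoref{lemma:betti_number_obibundle} (via \cite{Xu20}), and after a generically finite base change of controlled degree, the cover is isotrivially split as $(F\times B)/G$ with $\dim F\le 2$. This splitting gives a factorization of the fibration through a fibration in abelian varieties and a fibration of relative dimension at most $2$. We then apply \cite[Theorem 7.6]{Fil20} together with effective b-semiampleness in relative dimension $\le2$ (for abelian fibers, \cite{EFGMS}) to obtain b-freeness. We must also check that passing to the quasi-étale cover changes the moduli b-divisor only by a bounded multiple, using the index bound from \cite{YXu} and the bounded degree of the cover, which follows from \autoref{prop:fund_group} and the bounded torsion of the relevant groups.

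The main obstacle is case (ii) and the non-canonical subcase of (i): making the quasi-étale cover and the Albanese splitting work in family with a \emph{uniform} bound on the degree, and compatibly with the canonical bundle formula. I would first try to handle this as in \cite[§3]{FM00}. Using \autoref{lemma:betti_number_obibundle}, the Betti numbers of a smooth model of the index-one cover are bounded, which bounds the monodromy order on the variation of Hodge structure defining $\mathbf M$. Combined with the bounded index, this gives a uniform $I$ for which $I\mathbf M$ is b-Cartier and descends to a model where it is the pullback of an ample class from a period map with bounded data. B-freeness with uniform $I$ then follows from the corresponding statement for the period map, following \cite{Fil20}.
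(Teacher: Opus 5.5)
The main gap is your treatment of the subcase $\Delta_\eta=0$ with $X_\eta$ klt but not canonical. Passing to the index-one cover does not put you in case (ii). That cover is canonical with trivial canonical class, but it can be a Calabi--Yau 3-fold with $\tilde q=0$: the hypothesis only forces $\tilde q>0$ for generic fibers that are \emph{already} canonical with zero boundary. In that situation neither your Albanese argument nor \autoref{lemma:betti_number_obibundle} (which needs $q>0$) applies. Your fallback through \cite{FM00} would then require bounding the middle Betti number of Calabi--Yau 3-folds, which is open, and it would in any case give only a b-Cartier index, not freeness. The missing observation is that no cover is needed here. A $\mathbb Q$-factorial terminalization of a non-canonical $X_\eta$ extracts divisors of negative discrepancy, so it carries a \emph{nonzero} boundary, with coefficients in $\frac{1}{I_0}\mathbb Z\cap(0,1)$, where $I_0$ is the index bound from \cite{YXu}. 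Hence $K_X$ is not pseudo-effective over $Y$, and this subcase is absorbed into your MMP branch (you had in fact already terminalized in your first step). This is how the paper proceeds. After terminalizing, the reduction of \cite[Theorem~7.6]{Fil20} leaves only two cases: $X\to Y$ is itself a $K_X$-Mori fiber space, or $K_X\sim_{\mathbb Q,Y}0$ with $X$ terminal. The latter arises only from a canonical fiber with $\Delta_\eta=0$, hence, after a quasi-\'etale cover that does not change $\mathbf M$ (cf.\ \cite[Lemma~4.1]{Fuj03}), from a fiber with $q>0$.

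Second, in the Mori fiber space case with $Z\to Y$ birational, \cite[Proposition~6.3]{Bir19} and bounded complements only give that $n\mathbf M$ is b-Cartier. Effective b-freeness for Fano type fibrations of relative dimension 3 is not contained in \cite{PS09} or \cite{Fil20}; the reduction of \cite[Theorem~7.6]{Fil20} leaves exactly this case open. The paper closes it with \cite[Corollary~7.8]{BFMT}, and you need that input or an equivalent one. In case (ii) your strategy matches the paper's. However, the generically finite base change is unnecessary, and the degree bound you attribute to \autoref{prop:fund_group} is unjustified, since that proposition gives no such bound. By \cite{Wit08}, together with \cite{HM07} to pass from the terminal $X_\eta$ to a resolution, the Albanese of $X_\eta$ exists over $k(Y)$ as a morphism to a torsor under an abelian variety. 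It has connected fibers by \cite{Xu20}, and this directly yields the rational factorization $X\dashrightarrow Z\dashrightarrow Y$ to which \cite[\S~7]{Fil20} applies.
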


\begin{proof}
    To start, if $(X_\eta,\Delta_\eta)$ is a canonical variety with $\Delta_\eta = 0$, we replace $X_\eta$ with a quasi-\'etale cover with positive irregularity and we take a compactification over $Y$.
    By definition of the canonical bundle formula, this does not alter the moduli b-divisor;
    see, e.g., \cite[Lemma 4.1]{Fuj03}.
    
    By the index conjecture in dimension 3--see, e.g., \cite{YXu}--
    there is a positive integer $I_0$ such that $I_0 \cdot (K_{X_\eta} + \Delta_\eta) \sim 0$.
    Then, the coefficients appearing in a terminalization of $({X_\eta} , \Delta_\eta)$ are determined by those of $({X_\eta} , \Delta_\eta)$.
    Thus, without loss of generality, we may replace $(X,\Delta)$ with a birational model such that $({X_\eta} , \Delta_\eta)$ is terminal.
    Furthermore, since the moduli part of the canonical bundle formula only depends on the generic fiber, by \cite{HX13} or \cite{Bir12}, we may choose a different compactification of $(X_\eta,\Delta_\eta)$ over $Y$ so that $(X,\Delta)$ is itself a terminal $\mathbb Q$-factorial pair and, after possibly replacing $Y$ with a birational model, with $K_X + \Delta \sim_{\mathbb Q,Y} 0$.

    By \cite[Theorem C]{EFGMS} and \cite[Theorem 1.4]{babwild}, the effective b-semiampleness conjecture \cite[Conjecture 7.13.3]{PS09} is known in relative dimension 2.
    Therefore, by \cite[Theorem 7.6]{Fil20} and its proof, the conjecture in relative dimension 3 reduces to two cases:
    $X \to Y$ is a $K_X$-Mori fiber space, or $X$ is terminal--this condition is not explicitly stated in {\it loc. cit.} but follows by applying the reduction at the beginning of this proof--and $K_X\sim_{\mathbb Q,Y}0$.
    More precisely, by the proof of \cite[Theorem 7.6]{Fil20}, any other fibration admits a factorization over the generic point of the base and in that case the validity of the conjecture is settled.
    So, only these two extreme cases remain open.
    
    In the case of a Mori fiber space, the claim follows by \cite[Corollary 7.8]{BFMT}.
    Thus, under our assumptions and reductions, we may assume that the generic fiber is terminal and has positive irregularity.
    But then, by \cite{Wit08}, the generic fiber admits an Albanese morphism $X_\eta \to Z_\eta$ to a variety $Z_\eta$ defined over the function field of $Y$ such that $Z_\eta$ is a torsor under an Abelian variety and, up to passing to the algebraic closure, $X_\eta \to Z_\eta$ provides the Albanese morphism of the geometric generic fiber.
    In turn, by \cite{Xu20}, $X_\eta \to Z_\eta$ is a fibration with connected fibers.
    We observe that \cite{Wit08} discusses the case of smooth varieties;
    since $X_\eta$ is terminal, so in particular klt, by \cite{HM07} the Albanese morphism of $X_\eta$ agrees with the one of any resolution.
    In particular, we reduced to the case where the fibration $X \to Y$ factors, at least rationally, as a chain of fibrations $X \dashrightarrow Z \dashrightarrow Y$, where $Z$ is a geometric realization of $Z_\eta$ over $Y$.
    In turn, the methods of \cite[\S~7]{Fil20} apply and the claim follows.    
\end{proof}

\subsection{Index results}

\autoref{intro_thm_index} is a special case of the following more general statement.

\begin{theorem}\label{index_thm_pairs}
    Let $\Gamma \subset [0,1]$ be a DCC set of rational numbers.
    Then, there exists a positive integer $C > 0$ such that the following holds.
    Let $(X,\Delta)$ be a log Calabi--Yau 4-fold with $\mathrm{coeff}(\Delta) \subset \Gamma$.
    Further assume that $X$ admits a fibration $f \colon X \to Y$ with $\dim Y > 0$.
    Then, $C \cdot (K_X + \Delta) \sim 0$ holds.
\end{theorem}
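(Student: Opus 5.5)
We would follow the strategy of \cite[Proposition~6.3]{Bir19}, transferring the index bound from the fibres to the base via the canonical bundle formula. First we reduce to the case where $f$ is an lc-trivial fibration. Since $K_X+\Delta \equiv 0$, and by the abundance theorem for numerically trivial log canonical pairs, we have $K_X+\Delta \sim_{\mathbb Q} 0$, so $K_X+\Delta \sim_{\mathbb Q} f^*0$. By the ACC for log canonical thresholds and the global ACC, $\Gamma$ may be replaced by a finite subset $\Gamma_0 \subset \Gamma$ depending only on $\Gamma$ and the dimension. Replacing $(X,\Delta)$ with a $\mathbb Q$-factorial dlt modification only adds coefficients equal to $1$, so we may assume $X$ is $\mathbb Q$-factorial dlt. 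We then split according to $d=\dim Y \in \{1,2,3,4\}$. The case $d=4$ is a birational contraction, so the index is that of the lower-dimensional pair $(Y,f_*\Delta)$. This case is either trivial or handled by the index conjecture in dimension $\le 3$ \cite{YXu} after a further reduction, so we may take $1 \le d \le 3$ and $1 \le \dim X-\dim Y \le 3$.

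On $Y$ the canonical bundle formula gives a generalized log Calabi--Yau pair $(Y,B_Y,\mathbf M)$ with $K_Y+B_Y+\mathbf M_Y \sim_{\mathbb Q} 0$. The horizontal multiplicities of $\Delta$ lie in $\Gamma_0$. We then bound the b-Cartier index of $\mathbf M$ uniformly in each relative dimension. In relative dimension $3$ we use \autoref{thm_cartier_index} for the klt-over-the-generic-point case and \autoref{prop_cartier_index_lc} otherwise. In relative dimensions $1$ and $2$ we use the known effective b-semiampleness results (Kodaira's formula, \cite{PS09,Fil20,babwild,EFGMS}). This yields an $I$ such that $I\mathbf M$ is b-Cartier and, for relative dimension at most $2$, b-free. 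We can then choose $\mathbf M_Y$ so that $I\mathbf M_{Y'}$ is Cartier and base point free on a high model $Y'$, and replace it by a general member $\frac1I M'$. The coefficients of $B_Y$ lie in a DCC set, since they are computed from log canonical thresholds and coefficients in $\Gamma_0$ following \cite{Bir19}. Hence $(Y,B_Y+\mathbf M_Y)$ becomes an honest log Calabi--Yau pair of dimension $\le 3$ with coefficients in a DCC set depending only on $\Gamma$. By the index conjecture in dimension $\le 3$ \cite{YXu} (via ACC, the coefficients in fact lie in a finite set), there is a uniform $C_Y$ with $C_Y(K_Y+B_Y+\mathbf M_Y)\sim 0$.

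Finally, we lift this linear equivalence to $X$ as in \cite[Proposition~6.3]{Bir19}. The index of the general fibre is bounded by the index conjecture in dimension $\le 3$, say $m(K_F+\Delta_F)\sim 0$. We write $m(K_X+\Delta)\sim f^*V_Y$ and choose $M_Y$ as in Case~1 of the proof of \autoref{thm_cartier_index}, so that $m(K_X+\Delta)\sim m f^*(K_Y+B_Y+M_Y)$ with $mM_Y$ integral up to the bounded factor $I$. Combining this with $C_Y(K_Y+B_Y+M_Y)\sim 0$ gives $C(K_X+\Delta)\sim 0$ with $C=\operatorname{lcm}(m,I,C_Y)$ up to bounded multiples.

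We expect the main obstacle to be this last linearity step: passing from the $\mathbb Q$-linear canonical bundle formula to an honest linear equivalence with a uniformly bounded multiplier. This requires the discriminant coefficients to have bounded denominators, i.e.\ bounding the lct's $t_y$ near every codimension-one point of $Y$, uniformly. We would try to handle this with the Case~1/Case~2 analysis from the proof of \autoref{thm_cartier_index}, using the index of semi-log canonical 3-folds \cite[Theorem 1.8.8]{YXu} on the lc places over each point.
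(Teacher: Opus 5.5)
There is a genuine gap where you turn $(Y,B_Y,\mathbf M)$ into an honest log Calabi--Yau pair. Your overall skeleton matches the paper's: the canonical bundle formula on $Y$, a uniform bound on the b-Cartier index of $\mathbf M$ in each relative dimension, DCC coefficients for $B_Y$, the index conjecture in dimension at most $3$, and lifting through the index of the generic fibre.

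Replacing $\mathbf M_Y$ by $\frac1I$ times a general member of $|I\mathbf M_{Y'}|$ requires $I\mathbf M$ to be b-free with $I$ bounded. As you note yourself, you only have this in relative dimension at most $2$. In relative dimension $3$ the paper proves only a bound on the b-Cartier index (\autoref{thm_cartier_index}). It obtains b-freeness in \autoref{thm:effective_freeness} only outside the case where the generic fibre is a canonical $K$-trivial 3-fold with $\Delta_\eta=0$ and vanishing augmented irregularity, and in that case effective b-semiampleness is open. So your argument does not cover precisely the case \autoref{thm_cartier_index} was designed for.

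This is not a formality. In that case $Y$ is a curve. For $Y\cong\mathbb P^1$, integrality of $I\mathbf M_Y$ and of $CB_Y$ would suffice. For $Y$ elliptic, however, one has $B_Y=0$ and $\mathbf M_Y\equiv 0$, and (for $I\mid C$) $C(K_X+\Delta)\sim 0$ is equivalent to $C\mathbf M_Y\sim 0$. So you must bound the torsion order of $\mathbf M_Y$, which is exactly what b-freeness would give you and what your proposal does not address.

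The paper never passes to an honest pair. It first reduces, via \cite[Corollary~1.7]{JL21}, \cite[Theorem~3.1.5]{YXu} and \cite[Corollary~1.4]{Jiao24}, to canonical $X$ with $\Delta=0$. It then bounds the b-Cartier index of $\mathbf M$: by \cite[Theorem~3.1]{FM00} and the classification of surfaces in relative dimension at most $2$, and by \autoref{thm_cartier_index} in relative dimension $3$. Finally, it notes that the coefficients of $B_Y$ lie in a DCC set and applies \cite[Lemma~2.30]{FFMP} directly to the generalized pair $(Y,B_Y,\mathbf M)$.

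Two further remarks. First, the step you single out as the main obstacle is not one. With $I$ the bounded index of the generic fibre, the moduli part is chosen so that $I(K_X+\Delta)\sim I f^*(K_Y+B_Y+\mathbf M_Y)$ holds by construction \cite[\S~7.5]{PS09}. So once the index of $K_Y+B_Y+\mathbf M_Y$ is bounded, the lift to $X$ is immediate, and no further uniform control of the thresholds $t_y$ is needed. The Case~1/Case~2 analysis and the slc index bound of \cite{YXu} are ingredients in proving \autoref{thm_cartier_index}, not in this step. Second, the case $\dim Y=4$ does not occur, since a fibration has $\dim Y<\dim X$; in any case, $Y$ would not be lower-dimensional there.
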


\begin{proof}
    By \cite[Theorem~1.4]{HMX14}, there is a finite subset $\Gamma_0 \subset \Gamma$ such that $\mathrm{coeff}(\Delta) \subset \Gamma_0$.
    By \cite[Corollary~1.7]{JL21}, we may assume that $(X,\Delta)$ is klt.
    By \cite[Theorem~3.1.5]{YXu}, we may assume $\Delta = 0$. 
    Lastly, by \cite[Corollary 1.4]{Jiao24},
    we may assume that $X$ has canonical singularities.
    
    Let $f \colon X \to Y$ be a 4-fold as in the statement.
    Let $(Y,B_Y,\bM.)$ be the generalized pair induced on $Y$ by the canonical bundle formula.
    In turn, we may replace $X$ with a $\QQ$-factorial terminalization.
    In particular, if $\dim X - \dim Y =2$, we may assume that the general fiber of $f$ is a smooth $K$-trivial surface.
    
    By \cite[Theorem 3.1]{FM00}, the b-Cartier index of $\bM.$ only depends on the middle Betti number and the index of a general fiber of $f$.
    If $\dim X -\dim Y =1$, this is clear.
    If $\dim X -\dim Y = 2$, these invariants are bounded above by the classification of surfaces; see, e.g., \cite{Bea96}.
    Lastly, if $\dim X - \dim Y =3$, this is the content of \autoref{thm_cartier_index}.
    Thus, the b-Cartier index of $\bM.$ is bounded.
    
    Similarly, by \cite[\S~7.5]{PS09}, we have
    \begin{equation}\label{eq_index}
    I\cdot K_X \sim I \cdot f^*(K_Y +B_Y + \bM Y.),
    \end{equation}
    where $I$ denotes the index of the generic fiber of $f$.
    Since the index conjecture is known up to dimension 3--see, e.g., \cite{YXu}--$I$ belongs to a finite set.
    Thus, to bound the index of $K_X$, it suffices to bound the index of $K_Y+B_Y+\bM Y.$.

    By construction, the coefficients of $B_Y$ are of the form $1-e$, where $e$ is a log canonical threshold over a codimension 1 point in $Y$; see \cite[\S~7.2]{PS09}.
    Thus, the numbers $e$ belong to the set of log canonical thresholds of varieties of dimension at most 4.
    By \cite[Theorem 1.1]{HMX14}, the values $e$ belong to an ACC set.
    In turn, the coefficients of $B_Y$ belong to a DCC set of rational numbers.
    Then, since the index conjecture for klt pairs in known up to dimension 3, we may conclude by \cite[Lemma 2.30]{FFMP}.
\end{proof}

\subsection{Iitaka volume results}

Consider a projective pair $(X,\Delta)$.
Its {\it Kodaira dimension $\kappa(X,\Delta)$} is defined as the Iitaka dimension of the $\mathbb Q$-Cartier divisor $K_X+\Delta$.
If $\kappa=\kappa(X,\Delta) \geq 0$, the {\it Iitaka volume $\mathrm{vol}_\kappa(X,\Delta)$} is defined as follows:
$$
\limsup \frac{h^0(X,\mathcal{O}_X(\lfloor m(K_X+\Delta) \rfloor))}{m^{\kappa}/\kappa!}.
$$

The study of the arithmetic of the possible sets of volumes is an important topic in birational geometry;
see, e.g., \cite{HMX13,HMX18}.
To this end, consider a DCC set $\Gamma \subset [0,1]$ and non-negative integers $d$ and $\kappa$.
Following \cite{CHL25}, we define
$$
\mathfrak{P}^{\Gamma}_{\mathrm{lc}}(d,\kappa) = \{(X,\Delta)|(X,\Delta) \text{ \rm is a projective lc pair},\dim X = d, \kappa(X,\Delta)=\kappa,\mathrm{coeff(\Delta)\subset \Gamma} \}
$$
and
$$
\mathfrak{V}^\Gamma_{\textrm{lc}}(d,\kappa) = \{\mathrm{vol}_\kappa(X,\Delta)|(X,\Delta) \in \mathfrak{P}^{\Gamma}_{\mathrm{lc}}(d,\kappa) \}.
$$
If $d = \kappa$, $\mathfrak{V}^\Gamma_{\textrm{lc}}(d,\kappa)$ is a DCC set by \cite{Ale94,HMX18}.
More recently, there has been interest in studying the set $\mathfrak{V}^\Gamma_{\textrm{lc}}(d,\kappa)$ when $0 < \kappa < d$;
see \cite{Li24,Bir21}.
In particular, by \cite{CHL25}, $\mathfrak{V}^\Gamma_{\textrm{lc}}(d,\kappa)$ is a DCC set for $d \leq 3$.

\begin{theorem}\label{iitaka volume d-3}
Let $\Gamma \subset [0,1]$ be a DCC set of rational numbers,
and let $d$ and $\kappa$ be non-negative integers satisfying $\kappa \leq d$.
Then, $\mathfrak{V}^\Gamma_{\textrm{lc}}(d,\kappa)$ is a DCC set for all $\kappa \geq d-3$.
\end{theorem}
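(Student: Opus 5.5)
We follow the strategy of \cite{CHL25} and \cite{Li24}. Both reduce the DCC property for Iitaka volumes to two inputs: a DCC statement for volumes of generalized pairs on the base of the Iitaka fibration, and a uniform bound on the b-Cartier index of the moduli part of the canonical bundle formula. In our setting the second input is supplied by \autoref{thm_cartier_index}.

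\textbf{Reductions and setup.} Let $(X,\Delta)\in\mathfrak{P}^{\Gamma}_{\mathrm{lc}}(d,\kappa)$ with $\kappa\geq d-3$. The case $\kappa=d$ follows from \cite{HMX18}, so we assume $\kappa<d$. By \cite{Bir12} or \cite{HX13}, after a dlt modification and running the MMP, we may pass to a good minimal model. The Iitaka volume is a birational invariant of the log canonical ring, so this step does not change it. We then obtain the Iitaka fibration $f\colon X\to Z$ with $\dim Z=\kappa$ and $K_X+\Delta\sim_{\mathbb Q}f^*L_Z$, where $f$ is an lc-trivial fibration of relative dimension $d-\kappa\le 3$. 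The canonical bundle formula gives a generalized pair $(Z,B_Z,\mathbf M)$ with $K_Z+B_Z+\mathbf M_Z\sim_{\mathbb Q}L_Z$ big. Moreover,
$$\mathrm{vol}_\kappa(X,\Delta)=\mathrm{vol}(K_Z+B_Z+\mathbf M_Z),$$
up to the standard comparison that uses a uniform index $I$ of the generic fiber.

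\textbf{Controlling the data on the base.} By \cite[Theorem 1.4]{HMX14}, after discarding finitely many possibilities we may assume the horizontal coefficients of $\Delta$ lie in a finite subset $\Gamma_0\subset\Gamma$. We need two facts.
\begin{enumerate}
\item The coefficients of $B_Z$ lie in a DCC set. As in the proof of \autoref{index_thm_pairs}, each is of the form $1-e$ with $e$ a log canonical threshold, so this follows from the ACC for log canonical thresholds \cite[Theorem 1.1]{HMX14}.
\item There is a fixed integer $p$, depending only on $d-\kappa$ and $\Gamma_0$, such that $p\mathbf M$ is b-Cartier and nef. For relative dimension $1$ this is Kodaira's formula. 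For relative dimension $2$ it follows from \cite{FM00} together with the classification of surfaces, and \cite{EFGMS}. For relative dimension $3$ it is \autoref{thm_cartier_index}, whose constant depends only on the horizontal multiplicities.
\end{enumerate}

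\textbf{Conclusion.} We then invoke the DCC for volumes of generalized lc pairs $(Z,B_Z,\mathbf M)$ with DCC coefficients and $p\mathbf M$ b-Cartier, as in \cite{Bir21} or \cite[\S 3]{CHL25}. This yields that $\mathfrak{V}^\Gamma_{\mathrm{lc}}(d,\kappa)$ is DCC.

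\textbf{Main obstacle.} The hard step is ensuring that \autoref{thm_cartier_index} applies uniformly. Its constant depends only on horizontal multiplicities, which is why we first reduce to finitely many horizontal coefficients. We also need to check that the log canonical (not klt) generic fiber case is covered, which is done by \autoref{prop_cartier_index_lc}. Finally, the comparison between $\mathrm{vol}_\kappa(X,\Delta)$ and the volume on $Z$ must be carried out with the moduli part realized by a fixed multiple. We would handle this by copying the argument of \cite{CHL25} verbatim, replacing their relative-dimension $\le 2$ index input with \autoref{thm_cartier_index}.
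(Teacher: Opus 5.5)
Your overall architecture matches the paper's. You bound the horizontal coefficients via global ACC, get a uniform b-Cartier index for $\mathbf M$ from \autoref{thm_cartier_index}, get DCC coefficients on the base from ACC for log canonical thresholds, and finish with DCC of volumes of generalized pairs \cite{Bir21}.

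\textbf{The gap is your first reduction.} You assert that, by \cite{Bir12} or \cite{HX13}, every $(X,\Delta)\in\mathfrak{P}^{\Gamma}_{\mathrm{lc}}(d,\kappa)$ with $\kappa\geq d-3$ can be replaced by a good minimal model. Neither reference gives a global good minimal model here. In the klt case the claim is fine. The generic fiber of the Iitaka fibration has dimension at most $3$, so it has a good minimal model, and a Lai-type argument then gives a global good minimal model; this is \cite[Proposition~3.2]{CHL25}, which is what the paper invokes. But that proposition is a klt statement, so ``copying \cite{CHL25} verbatim'' does not cover pairs that are lc but not klt. For those, non-klt centers lying over proper subvarieties of the Iitaka base are not controlled by the cited results. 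For instance, \cite{HX13} only yields relative good minimal models when every non-klt center meets the locus where one is already known.

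\textbf{The missing idea is that you never need a global good minimal model.} You only need a model on which $K_X+\Delta$ is $\mathbb Q$-linearly trivial over a birational model of the Iitaka base. The paper builds one as follows:
\begin{enumerate}
\item Take a log resolution $h\colon X_\infty\to X$ resolving the Iitaka map $X\dashrightarrow Y$, with boundary $h^{-1}_*\Delta+E_\infty$.
\item Make $X_\infty\to Y$ equidimensional via \cite[Proposition~2.17]{ACSS}, adding new exceptional divisors with coefficient one so the log canonical ring is unchanged. After this, every fiber, and hence every non-klt center over its image, has dimension at most $3$.
\item Apply \cite[Theorem~1.4]{HH20} together with $3$-fold log abundance \cite{KMM94} to get a relative good minimal model $(X',\Delta')$ over $Y$, with relative ample model $Y'$.
\end{enumerate}
The resulting $(X',\Delta')\to Y'$ has the following properties:
\begin{enumerate}
\item it is a log Calabi--Yau fibration with the same section ring as $(X,\Delta)$;
\item $\mathrm{coeff}(\Delta')\subset\Gamma\cup\{1\}$, so you should replace $\Gamma$ by $\Gamma\cup\{1\}$ at the outset;
\item $K_{Y'}+B_{Y'}+\mathbf M_{Y'}$ is big, though not necessarily semiample, and its volume equals $\mathrm{vol}_\kappa(X,\Delta)$.
\end{enumerate}
This is all the rest of your argument uses, so with this replacement your proof goes through.

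As a minor difference, for $\kappa\geq d-2$ the paper simply quotes \cite[Theorem~1.3]{CHL25} rather than redoing relative dimensions $1$ and $2$ as you propose.
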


\begin{proof}
Without loss of generality, we may replace $\Gamma$ with $\Gamma \cup \{1\}$ and thus assume $1 \in \Gamma$.
When $\kappa(K_X+\Delta) \geq d-2$, the result follows from \cite[Theorem 1.3]{CHL25}. 

So, we may assume $\kappa(K_X+\Delta) = d-3$.
If $(X,\Delta)$ is klt, by \cite[Proposition~3.2]{CHL25}, $(X,\Delta)$ has a good minimal model $(X^\prime, \Delta^\prime)$.
Replacing $(X,\Delta)$ with $(X^\prime, \Delta^\prime)$, we may assume $K_X+\Delta$ is semiample, inducing the Iitaka fibration $f \colon X \to Y$. 
If $(X,\Delta)$ is log canonical but not klt, the proof of \cite[Proposition~3.2]{CHL25} can be modified as follows.
For the reader's convenience, we include a full proof following the blueprint of \cite[Proposition~3.2]{CHL25}.

Let $f \colon X \dashrightarrow Y$ a model of the Iitaka fibration of $(X,\Delta)$, where $Y$ is a smooth variety (notice that the target is denoted by $Z$ in {\it loc. cit.}).
Let $h \colon X_\infty \to X$ be a log resolution of $X$ that also resolves $f$.
On $X_\infty$, we choose $\Delta_\infty = h^{-1}_*\Delta + E_\infty$ as boundary, where $E_\infty$ denotes the reduces $h$-exceptional divisor.
By \cite[Propostion~2.17]{ACSS}, we may replace $X_\infty$ and $Y$ with higher birational models so that $X_\infty \to Y$ is equidimensional;
as in the previous step, the exceptional divisors are added with coefficient 1 to the strict transform of the boundary, in order to preserve the log canonical ring of the pair.
After this replacement, $(X_\infty,\Delta_\infty)$ may no longer be log smooth, but it is still $\mathbb Q$-factorial and log canonical.
Then, by \cite[Theorem~1.4]{HH20} and \cite{KMM94} (see also \cite[Theorem~2.3]{Sho96} for the $\mathbb R$-coefficients version, consistent with the setup of \cite{HH20}), $(X_\infty,\Delta_\infty)$ admits a relative good minimal model over $Y$.
We denote by $(X',\Delta')$ said relative good minimal model over $Y$ and by $Y'$ its relative ample model, which is a birational model of $Y$.
The contraction $f' \colon (X^\prime,\Delta') \to Y'$ has the following properties:
\begin{itemize}
    \item the section rings of $(X,\Delta)$ and $(X',\Delta')$ coincide;
    \item the fibration $f'$ is a model of the Iitaka fibration for both $K_X+\Delta$ and $K_{X'} + \Delta'$;
    \item $\mathrm{coeff}(\Delta') \subset \mathrm{coeff}(\Delta) \cup \{1\}$; and
    \item $(X',\Delta')$ is a log Calabi--Yau pair over $Y'$, inducing a generalized pair $(Y',B_{Y'},\mathbf M)$.
\end{itemize}
By abuse of notation, we replace $(X,\Delta)$ with said model and we replace $Y$ with $Y'$.
In particular, we write $(Y,B_Y,\mathbf M)$ for the above generalized pair.
Notice that, since $f$ is a model for the Iitaka fibration of $(X,\Delta)$, then $K_Y+B_Y+\mathbf M _Y$ is big.

By global ACC, see \cite[Theorem 1.5]{HMX14}, the horizontal multiplicities of $\Delta$ lie in a fixed finite subset $\Gamma_0$ of $\Gamma$. 
By \autoref{thm_cartier_index}, there is a constant $I$, only depending on $\Gamma_0$, such that $I \cdot \mathbf M$ is b-Cartier.

Then we have $\mathrm{vol}_{\kappa}(X,\Delta) = \mathrm{vol}(K_Y+B_Y+\mathbf M _Y)$, and by ACC for log canonical thresholds, see \cite[Theorem 1.1]{HMX14}, the coefficients of $B_Y$ lies in a fixed DCC set. 

Finally, by DCC of volumes for generalized pairs, see \cite[Theorem~1.3]{Bir21}, $\mathrm{vol}_{\kappa}(X,\Delta) = \mathrm{vol}(K_Y+B_Y+\mathbf M _Y)$ lies in a DCC set.
\end{proof}

\begin{corollary}\label{cor:iitaka_volume}
Let $\Gamma \subset [0,1]$ be a DCC set of rational numbers,
and let $d$ and $\kappa$ be non-negative integers satisfying $\kappa \leq d$.
Then, $\mathfrak{V}_{\mathrm{lc}}^\Gamma(d,\kappa)$ is a DCC set for all $0 \leq d \leq 4$.
\end{corollary}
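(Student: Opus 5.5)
The plan is a case split on $\kappa$ relative to $d$, feeding each case into a result already established. The constraint $d \le 4$ leaves only finitely many pairs $(d,\kappa)$ with $0 \le \kappa \le d$.

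\textbf{Large $\kappa$.} Suppose $\kappa \geq d-3$. Then \autoref{iitaka volume d-3} applies directly and says $\mathfrak{V}^\Gamma_{\mathrm{lc}}(d,\kappa)$ is DCC. For $d \le 3$ this covers every $\kappa \ge 0$, since $d-3 \le 0$. For $d=4$ it covers every $\kappa \geq 1$.

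\textbf{The remaining case $d=4$, $\kappa=0$.} Here the Iitaka volume is $\lim h^0(X,\mathcal{O}_X(\lfloor m(K_X+\Delta)\rfloor))$ along sufficiently divisible $m$. When $\kappa=0$, this $h^0$ is $1$ whenever it is nonzero, so the $\limsup$ equals $1$ and $\mathfrak{V}^\Gamma_{\mathrm{lc}}(4,0)\subset\{1\}$, which is trivially DCC. The one point to check is that the $\limsup$ is computed along all $m$ and not only divisible ones. If $h^0$ is sometimes $0$, the $\limsup$ is still $1$, because $\kappa=0$ guarantees $h^0=1$ for infinitely many $m$.

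\textbf{Assembling.} The union over the finitely many cases gives the corollary. The expected main obstacle is only notational: one has to be sure that the normalization $m^\kappa/\kappa!$ with $\kappa=0$ gives exactly $h^0$. Alternatively, if $\mathrm{vol}_0$ is defined in a way where that is not transparent, I would still conclude that $\mathfrak{V}^\Gamma_{\mathrm{lc}}(4,0)$ is contained in a single value, hence is DCC.
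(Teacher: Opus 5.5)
Your proposal is correct and follows the paper's proof: the case $\kappa=0$ is handled by noting that $h^0$ of the multiples is at most $1$ and equals $1$ infinitely often, so $\mathrm{vol}_0=1$, and all remaining cases follow from \autoref{iitaka volume d-3}, since $\kappa\geq 1\geq d-3$ when $d\leq 4$. The only difference is that you send $\kappa=0$ with $d\leq 3$ through \autoref{iitaka volume d-3} rather than through the trivial observation, and this changes nothing.
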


\begin{proof}
When $\kappa(X,\Delta) = 0$, $\mathrm{vol}_{\kappa}(X,\Delta) = 1$ holds and there is nothing to prove. 
When $\kappa(X,\Delta) \geq 1$, the result follows from \autoref{iitaka volume d-3}. 
\end{proof}

\bibliographystyle{amsalpha}
\bibliography{refs}

\end{document}